\newcommand\g{{\mathfrak g}}
\newcommand\h{{\mathfrak h}}
\newcommand{\q}{\mathfrak{q}}
\newcommand\Bl{\operatorname{Bl}}
\newcommand\m{\mathfrak m}
\newcommand\n{\mathfrak n}
\newcommand\z{\mathfrak z}
\newcommand\p{\mathfrak p}
\newcommand\Sk{\mathcal S}
\newcommand\Der{\operatorname{Der}}
\newcommand\Leaf{\mathcal{L}}
\newcommand\codim{\operatorname{codim}}
\newcommand\Spec{\operatorname{Spec}}
\newcommand\F{\operatorname{F}}
\newcommand\W{{\bf A}}
\newcommand\K{\mathbb K}
\newcommand\U{\mathcal U}
\newcommand\D{\mathcal D}
\newcommand\Ann{\operatorname{Ann}}
\newcommand\Id{\mathfrak{Id}}
\newcommand\Walg{\mathcal W}
\newcommand\Z{\mathbb Z}
\newcommand\A{\mathcal A}
\newcommand\B{\mathcal B}
\newcommand\M{\mathcal M}
\newcommand\NN{\mathbb N}
\newcommand\N{\mathcal{N}}
\newcommand\gr{\operatorname{gr}}
\renewcommand\O{\mathbb O}
\newcommand\I{\mathcal I}
\newcommand\J{\mathcal J}
\renewcommand\sl{\mathfrak{sl}}
\newcommand\Sp{\mathop{\rm Sp}\nolimits}
\newcommand\Hom{\operatorname{Hom}}
\newcommand{\ad}{\mathop{\rm ad}\nolimits}
\newcommand{\Ad}{\mathop{\rm Ad}\nolimits}
\newcommand{\HC}{\operatorname{HC}}
\newcommand\Centr{\mathcal Z}
\newcommand\Goldie{\operatorname{Grk}}
\newcommand\mult{\operatorname{mult}}
\newcommand{\VA}{\operatorname{V}}
\newcommand\Nil{\mathcal{N}}
\newcommand\Orb{\mathbb{O}}
\newcommand\HVB{\operatorname{HVB}}
\newcommand\KF{\operatorname{K}}
\newcommand\rank{\operatorname{rk}}
\newcommand\LAnn{\operatorname{LAnn}}
\newcommand\Fun{\mathcal{F}}
\newcommand\Mod{\operatorname{Mod}}
\newcommand\Coh{\operatorname{Coh}}
\newtheorem{Thm}{Theorem}[subsection]
\newtheorem{Prop}[Thm]{Proposition}
\newtheorem{Cor}[Thm]{Corollary}
\newtheorem{Lem}[Thm]{Lemma}
\theoremstyle{definition}
\newtheorem{Ex}[Thm]{Example}
\newtheorem{defi}[Thm]{Definition}
\newtheorem{Rem}[Thm]{Remark}
\newtheorem{Conj}[Thm]{Conjecture}
\numberwithin{equation}{section}
\numberwithin{table}{section} \oddsidemargin=0cm
\author{Ivan  Losev}
\title{Finite dimensional representations of $W$-algebras}
\thanks{{\it Key words and phrases}: $W$-algebras, nilpotent elements, universal
enveloping algebras,   two-sided ideals, Harish-Chandra bimodules,
finite dimensional representations}
\thanks{{\it 2000 Mathematics Subject Classification.} 16G99, 17B35}
\thanks{Address:  Department of Mathematics, Massachusetts Institute of
Technology, 77 Massachusetts Avenue, Cambridge, MA 02139, USA.
 E-mail: ivanlosev@math.mit.edu}
\begin{document}
\begin{abstract}
$W$-algebras of finite type are certain finitely generated
associative algebras closely related to the universal enveloping
algebras of semisimple Lie algebras. In this paper we prove a
conjecture of Premet that gives an almost complete classification of
finite dimensional irreducible modules for $W$-algebras. A key ingredient in our proof
is  a relationship between Harish-Chandra  bimodules and bimodules over $W$-algebras
that is also of independent interest.
\end{abstract}
\maketitle \tableofcontents
\section{Introduction}
\subsection{W-algebras}\label{SUBSECTION_W_intro}
Let $\g$ be a finite dimensional semisimple Lie algebra over an
algebraically closed field $\K$ of characteristic zero and $G$ be the simply connected algebraic group
with Lie algebra $\g$. Fix a
nilpotent element $e\in \g$ and let $\Orb$ denote its adjoint orbit.
Associated with the pair $(\g,e)$ is a certain associative unital
algebra $\Walg$ called the $W$-algebra (of finite type). In the
special case when $e$ is a principal nilpotent element this algebra
appeared in Kostant's paper \cite{Kostant}. In this case the
$W$-algebra is naturally isomorphic to the center $\Centr(\g)$ of
the universal enveloping algebra $\U:=U(\g)$. In the general case, a
definition of a W-algebra was given by Premet, \cite{Premet1}, 4.4.
Since then W-algebras were extensively studied, see, for instance,
\cite{BGK}, \cite{BK1}-\cite{BK3}, \cite{GG},\cite{Ginzburg},\cite{Wquant},
\cite{Premet2}-\cite{Premet4}.

Let us review Premet's definition
briefly. The definition is recalled in more detail in Subsection
\ref{SUBSECTION_Walg}.

To $e$ one assigns a certain subalgebra $\m\subset\g$ consisting of
nilpotent elements and of dimension $\frac{1}{2}\dim \Orb$, and also
a character $\chi:\m\rightarrow \K$. Set
$\m_\chi:=\{\xi-\langle\chi,\xi\rangle,\xi\in \m\}$. The $W$-algebra
$\Walg$ associated with the pair $(\g,e)$ is, by definition, the
quantum Hamiltonian reduction $(\U/\U\m_\chi)^{\ad\m}:=\{a+\U\m_\chi|
[\m,a]\subset \U\m_\chi\}$. This algebra has the following nice
features.

1) Choose an $\sl_2$-triple $(e,h,f)$ in $\g$ and set
$Q:=Z_G(e,h,f)$. There is an action of $Q$ on $\Walg$ by algebra
automorphisms. Moreover, there is a $Q$-equivariant embedding
$\q:=\operatorname{Lie}(Q)\hookrightarrow \Walg$ such that the
adjoint action of $\q\subset\Walg$ on $\Walg$ coincides with the
differential of the action of $Q$ on $\Walg$.

2) There is a distinguished increasing  exhaustive filtration (the Kazhdan filtration) $\KF_i\Walg, i\geqslant 0,$ of
$\Walg$ with $\KF_0\Walg=\K$. As Premet checked in \cite{Premet1},
the associated graded algebra is naturally identified with the
algebra of regular functions on a transverse slice
$S\subset\g$ to $\Orb$ called the {\it Slodowy slice}. The slice $S$ can
be defined as $e+\z_\g(f)$.

3) The space $\U/\U\m_\chi$ has a natural structure of a
$\U$-$\Walg$-bimodule. This allows to define the functor
$N\mapsto \Sk(N)$ from the category of (left)
$\Walg$-modules to the category of $\U$-modules: $\Sk(N):=
(\U/\U\m_\chi)\otimes_{\Walg}N$. This functor defines an
equivalence of $\Walg$-Mod with the full subcategory of $\U$-Mod consisting of all
{\it Whittaker} $\g$-modules, i.e., those, where the action of
$\m_\chi$ is locally nilpotent. The quasi-inverse functor is given
by $M\mapsto M^{\m_\chi}:=\{m\in M| \xi m=\langle\chi,\xi\rangle
m, \forall \xi\in \m\}$. This was proved by Skryabin in the appendix to
\cite{Premet1}.

\subsection{Finite dimensional irreducible representations}\label{SUBSECTION_fin_dim}

One of the  most important problems arising in representation theory
of associative algebras is to classify their irreducible finite
dimensional representations. Such representations are in one-to-one
correspondence with {\it primitive} ideals of finite codimension;
recall that a two-sided ideal is called  primitive if it
coincides with the annihilator of some irreducible module.

In \cite{Premet2} Premet proposed  to study the map $N\mapsto
\Ann_{\U}\Sk(N)$ from the set of all finite dimensional
irreducible $\Walg$-modules to the set of primitive ideals in $\U$.
He proved that the image consists of ideals, whose associated
variety in $\g$ coincides with $\overline{\Orb}$, and conjectured
that any such primitive ideal can be represented in the form
$\Ann_{\U}\Sk(N)$. This conjecture was proved by Premet in
\cite{Premet3}, Theorem 1.1,  under some mild restriction on an ideal, and   by the
author in \cite{Wquant} in the full generality,  alternative proofs
were recently found by Ginzburg, \cite{Ginzburg}, and Premet, \cite{Premet4}. Actually, the
author obtained a more precise result. He constructed two maps
$\I\mapsto \I^\dagger:\Id(\Walg)\rightarrow
\Id(\U), \J\mapsto \J_\dagger:
\Id(\U)\rightarrow \Id(\Walg)$ between the
sets $\Id(\Walg),\Id(\U)$ of two-sided ideals of $\Walg,\U$.
These two maps enjoy the following properties (see Theorem
\ref{Thm:5} for more details):
\begin{itemize}
\item[(a)] $\I^\dagger$ is primitive whenever $\I$ is. If, in addition, $\I$ is of finite codimension,
then the associated variety $\VA(\U/\I^\dagger)$ coincides with $\overline{\O}$.
\item[(b)] $\Ann_{\Walg}(N)^\dagger=\Ann_{\U}(\Sk(N))$ for any  $\Walg$-module $N$.
\item[(c)] $\codim_{\Walg}\J_\dagger=\operatorname{mult}_{\overline{\Orb}}(\U/\J)$ (see Subsection \ref{SUBSECTION_notation}
for the definition of $\operatorname{mult}_{\overline{\Orb}}$) provided $\VA(\U/\J)=\overline{\O}$.
\item[(d)] If $\J$ is primitive and $\VA(\U/\J)=\overline{\O}$, then $\{\I\in \Id_{fin}(\Walg)| \I^\dagger=\J\}$
coincides with the set of all primitive ideals of $\Walg$ containing
$\J_\dagger$.
\end{itemize}
Here and below
$\Id_{fin}(\Walg)$ denotes the set of all two-sided ideals of finite codimension in $\Walg$.

Premet suggested a stronger version of his existence conjecture
including also a uniqueness statement (e-mail correspondence).  The group $Q$ acts
naturally on $\Id_{fin}(\Walg)$. By 1) above, the unit component $Q^\circ$ of $Q$ acts on $\Id_{fin}(\Walg)$
trivially, so the action of $Q$ descends to that of the component group $C(e):=Q/Q^\circ$.

\begin{Conj}[Premet]\label{Conj:0}
For any primitive  $\J\in \Id_{\Orb}(\U):=\{\J\in \Id(\U)| \VA(\U/\J)=\overline{\O}\}$ the set of all
primitive ideals $\I\in \Id_{fin}(\Walg)$ with $\I^\dagger=\J$ is a
single $C(e)$-orbit.
\end{Conj}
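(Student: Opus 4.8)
The overall strategy is to reduce Conjecture~\ref{Conj:0} to a statement internal to $\Walg$ via property (d), and then to settle that statement using a new comparison between Harish--Chandra $\U$-bimodules and finite-dimensional $\Walg$-bimodules (the ``key ingredient'' of the abstract). \emph{Reduction.} Fix a primitive $\J$ with $\VA(\U/\J)=\overline{\O}$. By (c) the algebra $A:=\Walg/\J_\dagger$ is finite-dimensional, and by (d) the set $\{\I\in\Id_{fin}(\Walg)\mid\I^\dagger=\J\}$ is exactly the set of primitive (equivalently, maximal) ideals of $A$, i.e.\ (via $N\mapsto\Ann_A N$) the set $\operatorname{Irr}(A)$ of isomorphism classes of simple $A$-modules. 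Since $Q\subseteq G$ and inner automorphisms of $\U$ (those in the image of $\Ad$) preserve every two-sided ideal, $\Ad(g)\J=\J$ for $g\in Q$, so $\J_\dagger$ is $C(e)$-stable (the construction $\J\mapsto\J_\dagger$ being $Q$-equivariant); thus $C(e)$ acts on $A$ and on $\operatorname{Irr}(A)$, and Conjecture~\ref{Conj:0} becomes: \emph{$C(e)$ acts transitively on $\operatorname{Irr}(A)$}.

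\emph{The comparison functor.} Mimicking the quantum Hamiltonian reduction that defines $\Walg$ (Kazhdan filtration, completion, and passage to $\m_\chi$-invariants), I would build an exact, monoidal functor $\mathcal{B}\mapsto\mathcal{B}_\dagger$ from the category $\HC(\U)$ of Harish--Chandra $\U$-bimodules to the category of $Q$-equivariant $\Walg$-bimodules, with: $\U_\dagger=\Walg$; $\mathcal{B}_\dagger$ finite-dimensional whenever $\VA(\mathcal{B})\subseteq\overline{\O}$, with $\dim\mathcal{B}_\dagger=\mult_{\overline{\O}}(\mathcal{B})$ if $\VA(\mathcal{B})=\overline{\O}$ and $\mathcal{B}_\dagger=0$ if $\VA(\mathcal{B})\subsetneq\overline{\O}$ (because $\gr\mathcal{B}_\dagger$ is the fibre at $e$ of $\gr\mathcal{B}$ regarded as a coherent sheaf on the Slodowy slice); and compatibility with the maps on ideals, $(\U/\J)_\dagger\cong\Walg/\J_\dagger=A$ as $Q$-equivariant algebras. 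The crucial output is that $\mathcal{B}\mapsto\mathcal{B}_\dagger$ induces an equivalence of abelian categories
\[
\HC_{\overline{\O}}(\U)\big/\HC_{\partial\overline{\O}}(\U)\ \xrightarrow{\ \sim\ }\ \{\text{finite-dimensional }Q\text{-equivariant }\Walg\text{-bimodules}\},
\]
where $\HC_{\overline{\O}}$ (resp.\ $\HC_{\partial\overline{\O}}$) is the Serre subcategory of Harish--Chandra $\U$-bimodules with associated variety contained in $\overline{\O}$ (resp.\ in $\overline{\O}\setminus\O$); a quasi-inverse is obtained from a ``Whittaker induction'' functor generalising $\Sk$. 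Proving exactness and, especially, monoidality of $\mathcal{B}\mapsto\mathcal{B}_\dagger$, together with the flatness and adjunction estimates underlying the equivalence, is where I expect the bulk of the difficulty to lie.

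\emph{Conclusion.} Granting the equivalence, the simple $Q$-equivariant $\Walg$-bimodules are the $Q$-equivariant hulls $\bigoplus_g\gamma_g^{*}(N\otimes_\K M^{*})$ of $N\otimes_\K M^{*}$ with $N,M$ simple finite-dimensional $\Walg$-modules and $\gamma_g$ the automorphism of $\Walg$ induced by $g\in Q$, and under the equivalence they correspond to the simple Harish--Chandra $\U$-bimodules of full support $\overline{\O}$. Applying the equivalence to $A=(\U/\J)_\dagger$, the $Q$-equivariant bimodule composition factors of $A$ (over itself) correspond to the composition factors of the Harish--Chandra bimodule $\U/\J$ having associated variety $\overline{\O}$. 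Now $A/\operatorname{rad}A\cong\prod_i\operatorname{Mat}_{d_i}(\K)$ is a quotient of $A$ whose $Q$-equivariant bimodule composition factors include, for each $C(e)$-orbit on $\operatorname{Irr}(A)$, the simple $Q$-equivariant bimodule $\bigoplus_{i\in\,\mathrm{that\ orbit}}\operatorname{Mat}_{d_i}(\K)$; hence the number of $C(e)$-orbits on $\operatorname{Irr}(A)$ is at most the number of isomorphism classes of full-support composition factors of $\U/\J$. It therefore suffices to prove the purely $\U$-theoretic statement that \emph{$\U/\J$ has, up to isomorphism, a unique composition factor of associated variety $\overline{\O}$}. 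For this one compares $\U/\J$ with $\mathcal{L}:=\Hom_\K(L,L)_{\mathrm{fin}}$, the ad-finite endomorphisms of a simple module $L$ with $\Ann_\U L=\J$: the inclusion $\U/\J\hookrightarrow\mathcal{L}$ has cokernel of associated variety $\subsetneq\overline{\O}$, so $\U/\J$ and $\mathcal{L}$ have the same full-support composition factors, and $\mathcal{L}$ has, up to isomorphism, a unique such factor — its socle — by Joseph's analysis of $\U/\J$ and of the bimodules $\Hom_\K(L,L)_{\mathrm{fin}}$. This completes the proof; I expect the reduction and the equivalence of categories, rather than this last classical input, to carry the weight of the argument.
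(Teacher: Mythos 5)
Your proposal is correct but takes a genuinely different route in the final step. The reduction (via property (d), i.e.\ \cite{Wquant}, Theorem 1.2.2(viii)) to the statement that $C(e)$ acts transitively on the minimal primes $\I_1,\dots,\I_l$ of $\J_\dagger$ coincides with the paper's. From there the paper argues entirely within the ideal calculus of Theorem \ref{Thm:5} and Theorem \ref{Thm:1}: the $C(e)$-stable ideal $\bigcap_{\gamma\in C(e)}\gamma\I_1$ equals $(\J^1)_\dagger$ for $\J^1:=(\bigcap_\gamma\gamma\I_1)^\dagger$ by Theorem \ref{Thm:1}; a squeeze $\J=\I_1^\dagger\supset\J^1\supset(\bigcap_i\I_i)^\dagger=\J$ gives $\J^1=\J$, so $\J_\dagger=\bigcap_\gamma\gamma\I_1=\bigcap_i\I_i$, forcing $\{\I_i\}=\{\gamma\I_1\}$. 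You instead invoke the category equivalence of Theorem \ref{Thm:2}(5) to transfer the count of $C(e)$-orbits to a count of full-support composition factors of $\U/\J$ in $\HC_\O(\U)$, and then appeal to a classical input. Your appeal to ``Joseph's analysis'' of $\mathcal{L}=\Hom_\K(L,L)_{\mathrm{fin}}$ is imprecise (simplicity of the socle of $\mathcal{L}$ is Joseph's, but uniqueness of the full-support composition factor does not follow formally from that); the clean way to get what you need is to observe that any nonzero sub-bimodule of $\U/\J$ is a two-sided ideal $\J'/\J$ with $\J'\supsetneq\J$, so the Borho--Kraft dimension inequality for prime rings (\cite{BoKr}, the same Corollary 3.6 the paper uses in Remark \ref{Rem:4.5.1}) gives $\VA(\U/\J')\subsetneq\overline{\O}$, hence $\U/\J$ is actually \emph{simple} in $\HC_\O(\U)$, and $A=(\U/\J)_\dagger$ is a simple object of $\HC^Q_{fin}(\Walg)$ — from which transitivity of $C(e)$ on $\operatorname{Irr}(A)$ is immediate (a nontrivial $C(e)$-orbit decomposition of $A/\operatorname{rad}A$ would yield a proper nonzero $Q$-equivariant sub-bimodule). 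Both routes rest on the surjectivity input of Theorem \ref{Thm:surjectivity} (yours via the ``closed under subquotients'' clause of Theorem \ref{Thm:2}(5), the paper's via Theorem \ref{Thm:1}); the paper's is a bit more economical since it never leaves the lattice of ideals, whereas yours invokes the full categorical equivalence plus an external result of Borho--Kraft, but in exchange it exhibits the simple $Q$-equivariant $\Walg$-bimodule $A$ explicitly as the image of the simple object $\U/\J\in\HC_\O(\U)$.
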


Note that irreducible $\Walg$-modules, whose annihilators are
$C(e)$-conjugate, are very much alike. In the representation theory
of $\U$ there are (complicated) techniques allowing to describe the
set of primitive ideals in $\Id_{\Orb}(\U)$, see \cite{Jantzen} for details. So
Conjecture \ref{Conj:0} provides an almost complete classification
of irreducible finite dimensional representations of $\Walg$. This
classification is complete whenever the action of $C(e)$ on $\Id_{fin}(\Walg)$ is trivial. This is the
case, for example, when $\g=\sl_n$. Here $Q=Q^\circ Z(G)$ and $Z(G)$ acts trivially on $\Walg$. Here the classification was obtained by
Brundan and Kleshchev, \cite{BK2} by completely different methods (they used a
relation between $W$-algebras and shifted Yangians).

In Subsection \ref{SUBSECTION_finish} we derive Conjecture
\ref{Conj:0} from the following statement.

\begin{Thm}[Extended Premet's conjecture]\label{Thm:1}
An element $\I\in \Id_{fin}(\Walg)$ equals $\J_\dagger$ for some $\J\in \Id_{\Orb}(\U)$ if and only if
$\I$ is $C(e)$-invariant. If this is the case, then $\I=(\I^\dagger)_\dagger$.
\end{Thm}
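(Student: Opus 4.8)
The plan is to deduce the theorem from two ingredients. The first is that the maps $\I\mapsto\I^\dagger$ and $\J\mapsto\J_\dagger$ are $Q$-equivariant. The second -- the technical core, and the ``relationship of independent interest'' advertised in the abstract -- is a pair of functors $\bullet_\dagger,\bullet^\dagger$ relating Harish-Chandra $\U$-bimodules and $\Walg$-bimodules with the following features: $\bullet_\dagger$ sends Harish-Chandra $\U$-bimodules with associated variety in $\overline{\O}$ to finite-dimensional $Q$-equivariant $\Walg$-bimodules, kills exactly those supported on the boundary $\partial\O:=\overline{\O}\setminus\O$, and induces an equivalence of the resulting Serre quotient category with the category of finite-dimensional $C(e)$-equivariant $\Walg$-bimodules, with quasi-inverse induced by $\bullet^\dagger$; both functors are $Q$-equivariant and compatible with tensor products, hence take quotient algebras to quotient algebras; $\bullet^\dagger$ lands among ``saturated'' bimodules (no nonzero sub- or quotient bimodule supported on $\partial\O$); and on ideals they recover $\dagger,{}_\dagger$, i.e.\ $\bullet^\dagger(\Walg/\I)=\U/\I^\dagger$ and $\bullet_\dagger(\U/\J)=\Walg/\J_\dagger$, consistently with (a)--(d).

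For the ``only if'' direction, note that every two-sided ideal of $\U$ is $G$-stable: the adjoint $G$-action on $\U$ is locally finite and on each finite-dimensional $\ad\g$-stable subspace is the unique integral of the $\ad\g$-action, which preserves every two-sided ideal. Hence $\J\in\Id_{\Orb}(\U)$ is $Q$-stable, so $(\U/\J)_\dagger$ is a $Q$-equivariant bimodule and $\J_\dagger$ is a $Q$-stable ideal of $\Walg$; since $Q^\circ$ acts trivially on $\Id(\Walg)$, $\J_\dagger$ is $C(e)$-invariant.

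For the ``if'' direction and the identity, let $\I\in\Id_{fin}(\Walg)$ be proper and $C(e)$-invariant. Then $\Walg/\I$ is a finite-dimensional $C(e)$-equivariant $\Walg$-bimodule -- an object of the target category -- and a quotient algebra of $\Walg$. Set $\J:=\I^\dagger$, so $\U/\J=\bullet^\dagger(\Walg/\I)$. Since $\bullet^\dagger$ is induced by a section of the equivalence, $\bullet_\dagger(\U/\J)\cong\Walg/\I$, that is $\Walg/\J_\dagger\cong\Walg/\I$; because $\bullet_\dagger,\bullet^\dagger$ respect the quotient-algebra structure, this is an isomorphism of quotient algebras of $\Walg$, so $\J_\dagger=\I$, i.e.\ $\I=(\I^\dagger)_\dagger$. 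Moreover $\U/\J=\bullet^\dagger(\Walg/\I)\neq 0$ is saturated, and a nonzero saturated Harish-Chandra bimodule with associated variety in $\overline\O$ cannot be supported on $\partial\O$, so $\O\subseteq\VA(\U/\J)\subseteq\overline\O$ forces $\VA(\U/\J)=\overline\O$, i.e.\ $\J\in\Id_{\Orb}(\U)$. (When $\I$ is primitive one can argue more concretely from (a)--(d): then $\J=\I^\dagger$ is primitive, $\U/\J$ is a prime ring, hence semiprime and indecomposable as a bimodule, so $R:=\Walg/\J_\dagger$ is a finite-dimensional semiprime -- thus semisimple -- $\K$-algebra which is indecomposable as a $C(e)$-equivariant algebra; therefore $C(e)$ permutes the matrix factors of $R$, equivalently by (d) the primitive ideals of $\Walg$ containing $\J_\dagger$, which are exactly the maximal ideals of $R$, transitively; so a $C(e)$-fixed one -- such as $\I$ -- is the only one, $R$ is simple, and $\I=\J_\dagger$. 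This also yields Conjecture \ref{Conj:0}.)

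The main obstacle is the second ingredient: constructing $\bullet_\dagger,\bullet^\dagger$, pinning down the source and target categories, and proving that discarding the $\partial\O$-supported bimodules turns $\bullet_\dagger$ into an equivalence with finite-dimensional $C(e)$-equivariant $\Walg$-bimodules, together with the $Q$-equivariance and monoidal compatibilities and the numerical identity $\codim_\Walg\J_\dagger=\mult_{\overline\O}(\U/\J)$. One also needs the easier input invoked above, namely that for a $C(e)$-invariant $\I\in\Id_{fin}(\Walg)$ the bimodule $\Walg/\I$ indeed lies in the target category and that $\gr(\Walg/\I)$ is a graded quotient of $\gr\Walg=\K[S]$; by the contracting Kazhdan $\K^\times$-action its support is then concentrated at $e$, which is what pins the associated variety on the $\U$-side to exactly $\overline\O$.
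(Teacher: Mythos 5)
Your overall architecture is the right one -- route the statement through functors $\bullet_\dagger,\bullet^\dagger$ between Harish-Chandra bimodule categories -- and your "only if" argument (every two-sided ideal of $\U$ is $G$-stable, so $\J_\dagger$ is automatically $Q$-stable, hence $C(e)$-fixed) matches the paper's. However, the reduction you propose for the "if" direction rests on two claims that are stated too strongly.

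First, you posit that $\bullet_\dagger$ induces an equivalence of $\HC_\O(\U)$ with \emph{the whole} category of finite-dimensional $C(e)$-equivariant $\Walg$-bimodules. The paper proves only that it is an equivalence onto a full subcategory of $\HC^Q_{fin}(\Walg)$ closed under subquotients (Theorem \ref{Thm:2}(5)), and explicitly warns that the functor is most likely \emph{not} an equivalence onto the whole target; so the ingredient you invoke is stronger than what is available, and is believed to be false. Your argument applies the equivalence only to $\Walg/\I$, which is a quotient of $\Walg=\U_\dagger$, so it can be salvaged by using the weaker statement plus closure under subquotients -- but you do not make that reduction, and as written the proof relies on a false premise. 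Second, you assert $\U/\J=\bullet^\dagger(\Walg/\I)$ for $\J:=\I^\dagger$; what actually holds (Remark \ref{Rem_up_dag}) is only the inclusion $\U/\J\hookrightarrow(\Walg/\I)^\dagger$ (the functor $\bullet^\dagger$ is only left exact). Again this is repairable: applying the exact functor $\bullet_\dagger$ to this embedding and using that $\Walg/\I$ is in the image gives $\J_\dagger\supseteq\I$, which, combined with $(\I^\dagger)_\dagger\subseteq\I$ from Theorem \ref{Thm:5}(ii), yields the desired equality. But as written the equality is asserted without justification.

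The deeper issue is that you defer the "main obstacle" -- proving the equivalence -- without engaging with what actually makes it work. In the paper the categorical equivalence is not the input to Theorem \ref{Thm:1} but a consequence of the same technical engine: the surjectivity Theorem \ref{Thm:surjectivity}, which says that for $\M\in\HC(\U)$ and a $Q$-stable finite-codimension subbimodule $\N\subset\M_\dagger$ one has $(\N^\ddag)_\dagger=\N$. Applied with $\M=\U$ (so $\M_\dagger=\Walg$, $\N=\I$, and $\N^\ddag=\I^\dagger$ by Remark \ref{Rem_up_dag}), this gives $\I=(\I^\dagger)_\dagger$ directly, without routing through the categorical equivalence at all; the remaining point that $\I^\dagger\in\Id_\O(\U)$ is covered by Remarks \ref{Rem:1.1.11}, \ref{Rem:4.5.1} using ideas close to your associated-variety paragraph. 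So the ingredient to develop is precisely that surjectivity statement (which in turn is where the decomposition theorem, the properties of homogeneous vector bundles, and the inverse-limit argument via Lemma \ref{Lem:4.3.2} enter), not the equivalence of categories as you state it.
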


\subsection{Harish-Chandra bimodules}\label{SUBSECTION_HC_intro}
Any two-sided ideal in $\U$ is a Harish-Chandra bimodule.
Recall that a $\U$-bimodule $\M$ is said to be Harish-Chandra if it is finitely
generated (as a bimodule) and the adjoint action of $\g$ on
$\M$ is locally finite meaning that every element of $\M$ is contained in a finite dimensional $\g$-submodule.
The Harish-Chandra $\U$-bimodules form an abelian category to be denoted by $\HC(\U)$.
We will see that, essentially, Theorem \ref{Thm:1} is a corollary of a more general
result on Harish-Chandra bimodules, see Theorem \ref{Thm:surjectivity}.

It turns out that there is a relationship between Harish-Chandra $\U$-bimodules and  $\Walg$-bimodules.
The idea to study this relationship was communicated to me by Ginzburg, his own approach is explained in
\cite{Ginzburg}.  Moreover, there is a hope, see Subsection \ref{SUBSECTION_further} for details, to obtain
the complete (not just modulo the $C(e)$-action) classification of finite dimensional irreducible
$\Walg$-modules studying the relationship between certain categories of bimodules.

It turns out that for a  $W$-algebra one can also define the notion of a Harish-Chandra bimodule.
A precise definition will be given in Subsection \ref{SUBSECTION_completions}. What we need to know right now is that any finite dimensional $\Walg$-bimodule is Harish-Chandra.
In fact,  we will consider the category of {\it $Q$-equivariant} Harish-Chandra $\Walg$-bimodules.
We say that a $\Walg$-bimodule $\Nil$ is {\it
$Q$-equivariant}, if it is equipped with a locally finite linear action of $Q$
such that
\begin{enumerate}
\item The structure map $\Walg\otimes \Nil\otimes \Walg\rightarrow\Nil$ is $Q$-equivariant.
\item The differential of the $Q$-action (defined since the action is locally finite) coincides with the adjoint action of $\q\subset\Walg$
on $\Nil$: $(\xi,n)\mapsto \xi n-n\xi$.
\end{enumerate}

$Q$-equivariant Harish-Chandra $\Walg$-bimodules form a
monoidal abelian category (tensor product  is the tensor product of $\Walg$-bimodules),
which we denote by $\HC^Q(\Walg)$. Inside  $\HC^Q(\Walg)$ there is the full subcategory
$\HC^Q_{fin}(\Walg)$ consisting of all finite dimensional $Q$-equivariant bimodules.

It turns out that the category
$\HC_{fin}^Q(\Walg)$ is closely related to a certain subquotient of $\HC(\U)$.

Namely,   consider the abelian category
$\HC_{\overline{\Orb}}(\U)$ of Harish-Chandra $\U$-bimodules $\M$
whose associated variety $\VA(\M)$ is contained  in $\overline{\Orb}$.
It has a Serre subcategory $\HC_{\partial{\Orb}}(\U)$ consisting of all
Harish-Chandra bimodules $\M$ with $\VA(\M)\subset \partial{\Orb}:=\overline{\Orb}\setminus \Orb$. We can form
the quotient category $\HC_{\Orb}(\U):=\HC_{\overline{\Orb}}(\U)/\HC_{\partial{\Orb}}(\U)$.

The category $\HC(\U)$  has a monoidal structure with respect to the tensor product of $\U$-bimodules. The subcategory $\HC_{\overline{\O}}(\U)$ is closed with respect to tensor products (but does not contain a unit of $\HC(\U)$). Clearly, the tensor product descends to $\HC_{\O}(\U)$.

In \cite{Ginzburg}, Section 4, Ginzburg constructed an exact functor
 $\HC_{\O}(\U)\rightarrow\HC_{fin}^Q(\Walg)$ (in fact, he did not consider
$Q$-actions but his construction can be easily upgraded to the
$Q$-equivariant setting, see Subsection \ref{SUBSECTION_Ginzburg}).
 Roughly speaking, this functor
 should be close to an equivalence (but there are strong evidences that it is not,
 the actual situation should be much subtler, see the next subsection).
In this paper we obtain some partial results towards this claim
to be stated now.

In Subsection \ref{SUBSECTION_construction2} we will construct
functors $\M\mapsto
\M_\dagger:\HC_{\overline{\Orb}}(\U)\rightarrow \HC_{fin}^Q(\Walg), \Nil\mapsto \Nil^\dagger: \HC_{fin}^Q(\Walg)\rightarrow
\HC_{\overline{\Orb}}(\U)$. The following theorem describes the
properties of these two functors.

\begin{Thm}\label{Thm:2}
\begin{enumerate}
\item The functor $\M\mapsto \M_\dagger$ is exact and left-adjoint to the functor   $\Nil\mapsto
\Nil^\dagger$. Moreover, for  $\J\in \Id_{\Orb}(\U)$ we have $(\U/\J)_\dagger=\Walg/\J_\dagger$.
\item Let $\M\in \HC_{\overline{\Orb}}(\U)$. Then
$\dim\M_{\dagger}=\mult_{\overline{\Orb}}(\M)$, and the kernel and the cokernel of the natural
homomorphism $\M\rightarrow (\M_\dagger)^\dagger$ lie in $\HC_{\partial\Orb}(\U)$.
\item $\M\rightarrow \M_{\dagger}$ is a tensor functor.
\item $\operatorname{LAnn}(\M)_{\dagger}=\operatorname{LAnn}(\M_\dagger), \operatorname{RAnn}(\M)_{\dagger}=\operatorname{RAnn}(\M_\dagger)$.
\item The functor $\M\mapsto \M_\dagger$ gives rise to an equivalence of
$\HC_{\Orb}(\U)$ and some full  subcategory in
$\HC_{fin}^Q(\Walg)$ closed under taking subquotients.
\end{enumerate}
\end{Thm}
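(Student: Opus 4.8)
The plan is to deduce the equivalence from the adjunction and the near-isomorphism statements already packaged into parts (1) and (2). First I would recall the general formalism: if $F\colon\A\to\B$ is an exact functor with exact right adjoint $G$, and $\mathcal K\subset\A$ is the Serre subcategory of objects $M$ with $FM=0$, then $F$ factors through $\A/\mathcal K$ and the induced functor $\bar F\colon\A/\mathcal K\to\B$ is fully faithful precisely when the unit $M\to GFM$ has kernel and cokernel in $\mathcal K$ for all $M$ (equivalently, when $GF$ is the localization functor followed by a fully faithful embedding). In our situation $F=(-)_\dagger\colon\HC_{\overline\Orb}(\U)\to\HC^Q_{fin}(\Walg)$ and $G=(-)^\dagger$, and I claim the Serre kernel $\mathcal K$ is exactly $\HC_{\partial\Orb}(\U)$. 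Indeed, by part (2) $\dim\M_\dagger=\mult_{\overline\Orb}(\M)$, and $\mult_{\overline\Orb}(\M)=0$ if and only if $\VA(\M)\subset\partial\Orb$, i.e. $\M\in\HC_{\partial\Orb}(\U)$; so $\M_\dagger=0\iff\M\in\HC_{\partial\Orb}(\U)$. Hence $F$ factors through $\HC_\Orb(\U)=\HC_{\overline\Orb}(\U)/\HC_{\partial\Orb}(\U)$, giving $\bar F\colon\HC_\Orb(\U)\to\HC^Q_{fin}(\Walg)$.

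Next I would establish full faithfulness of $\bar F$. By the adjunction from part (1), for $\M,\M'\in\HC_{\overline\Orb}(\U)$ one has $\Hom_{\HC^Q_{fin}(\Walg)}(\M_\dagger,\M'_\dagger)=\Hom_{\HC_{\overline\Orb}(\U)}(\M,(\M'_\dagger)^\dagger)$. On the other hand, $\Hom$ in the quotient category $\HC_\Orb(\U)$ is the filtered colimit of $\Hom_{\HC_{\overline\Orb}(\U)}(\M'',\M''')$ over subobjects $\M''\subset\M$ with $\M/\M''\in\HC_{\partial\Orb}(\U)$ and quotients $\M'\twoheadrightarrow\M'''$ with kernel in $\HC_{\partial\Orb}(\U)$. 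The natural map $\M'\to(\M'_\dagger)^\dagger$ from part (2) has both kernel and cokernel in $\HC_{\partial\Orb}(\U)$, so $(\M'_\dagger)^\dagger$ is canonically isomorphic to $\M'$ in $\HC_\Orb(\U)$; the content here is the standard calculus-of-fractions fact that $\Hom_{\HC_\Orb(\U)}(\M,\M')\cong\Hom_{\HC_\Orb(\U)}(\M,(\M'_\dagger)^\dagger)$, and moreover that every morphism $\M\to(\M'_\dagger)^\dagger$ in $\HC_{\overline\Orb}(\U)$ already represents a morphism in the quotient (a "roof" with identity numerator), and conversely each roof $\M\leftarrow\M''\to\M'''$ with the relevant kernels/cokernels killed maps isomorphically onto such an honest morphism after applying $(-)_\dagger$ (which, being exact, sends the structure maps of the roof to isomorphisms by part (2)). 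Chasing this through identifies $\Hom_{\HC_\Orb(\U)}(\M,\M')$ with $\Hom_{\HC_{\overline\Orb}(\U)}(\M,(\M'_\dagger)^\dagger)=\Hom_{\HC^Q_{fin}(\Walg)}(\bar F\M,\bar F\M')$, which is full faithfulness.

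It remains to identify the essential image and check closure under subquotients. The essential image of $\bar F$ is the class of $\Nil\in\HC^Q_{fin}(\Walg)$ isomorphic to some $\M_\dagger$; call it $\mathcal C$. Given such $\Nil=\M_\dagger$ and a subbimodule or quotient bimodule $\Nil'$ of $\Nil$, I would produce a preimage as follows. Apply $(-)^\dagger$ to the inclusion (resp. surjection) $\Nil'\hookrightarrow\Nil$; since $(-)^\dagger$ is exact (part (1) — it has a left adjoint and we will have checked, or may cite from the body, that it is exact) we get a sub/quotient $\Nil'^\dagger$ of $\Nil^\dagger=(\M_\dagger)^\dagger$, hence — after pulling back or pushing out along $\M\to(\M_\dagger)^\dagger$, whose kernel and cokernel are in $\HC_{\partial\Orb}(\U)$ — a subquotient $\M'\subset\M$ (resp. quotient of $\M$) in $\HC_{\overline\Orb}(\U)$. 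Then $\M'_\dagger$ maps to $\Nil'$, and by comparing dimensions via part (2) ($\dim\M'_\dagger=\mult_{\overline\Orb}(\M')$, and multiplicities are additive in short exact sequences, so they add up correctly against $\dim\Nil'+\dim(\Nil/\Nil')=\dim\Nil=\mult_{\overline\Orb}(\M)$) together with the unit/counit triangle identities of the adjunction, one concludes $\M'_\dagger\cong\Nil'$. Hence $\Nil'\in\mathcal C$, proving $\mathcal C$ is closed under subquotients.

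The main obstacle I anticipate is the essential-image/subquotient step: exactness of $(-)^\dagger$ is not among the properties listed in the statement of Theorem \ref{Thm:2} and must be extracted from its construction in Subsection \ref{SUBSECTION_construction2} (a right adjoint is only left exact a priori), and the bookkeeping of multiplicities needs care because a subbimodule $\Nil'$ of $\Nil=\M_\dagger$ need not be of the form $\M'_\dagger$ for $\M'\subset\M$ on the nose — one genuinely has to go up via $(-)^\dagger$, form the fibre product $\M\times_{(\M_\dagger)^\dagger}\Nil'^\dagger$, and argue that applying $(-)_\dagger$ recovers $\Nil'$ using right-exactness of $(-)_\dagger$ and the dimension count. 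Once exactness of $(-)^\dagger$ is in hand, the rest is a diagram chase plus the additivity of $\mult_{\overline\Orb}$ on exact sequences.
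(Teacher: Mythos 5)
Your argument for full faithfulness of the induced functor $\bar F\colon\HC_\Orb(\U)\to\HC^Q_{fin}(\Walg)$ is essentially the paper's (the paper phrases it as ``abstract nonsense'' after observing that $\bullet^\dagger$ composed with the projection to $\HC_\Orb(\U)$ is right adjoint to $\bullet_\dagger$ and also a left inverse). The gap is in the closure-under-subquotients step, and it is a genuine one.

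You propose: given $\Nil'\subset\Nil=\M_\dagger$, pull back $\Nil'^\dagger\subset(\M_\dagger)^\dagger$ along $\M\to(\M_\dagger)^\dagger$ to get $\M'\subset\M$, and then conclude $\M'_\dagger\cong\Nil'$ by a dimension count using additivity of $\mult_{\overline\Orb}$. But the dimension count does not close. By construction $\M'$ maps into $\Nil'^\dagger$, hence $\M'_\dagger$ maps into $\Nil'$, and from $\M'\hookrightarrow\M$ together with exactness of $\bullet_\dagger$ one gets $\M'_\dagger\hookrightarrow\Nil$; so indeed $\M'_\dagger\subset\Nil'$. However, exactness of $\bullet_\dagger$ also gives $(\M/\M')_\dagger\cong\Nil/\M'_\dagger$, which surjects onto $\Nil/\Nil'$; so the only inequalities one can extract are $\dim\M'_\dagger\leqslant\dim\Nil'$ and $\dim(\M/\M')_\dagger\geqslant\dim(\Nil/\Nil')$, and these are consistent with $\M'_\dagger$ being a proper submodule of $\Nil'$. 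Nothing in the adjunction formalism prevents the preimage $\M'$ from being ``too small'': a priori $\M'$ could fail to saturate $\Nil'$ under $\bullet_\dagger$. Your secondary worry about exactness of $\bullet^\dagger$ is also well placed --- it is a right adjoint, hence only left exact, and the paper never asserts or needs more.

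The missing ingredient is precisely Theorem \ref{Thm:surjectivity} (the ``surjectivity theorem''), which asserts that with $\N^\ddag$ defined as you define $\M'$, one has $(\N^\ddag)_\dagger=\N$. This is not a formal consequence of the adjunction and the multiplicity formula; the paper devotes Subsection \ref{SUBSECTION_Surjectivity} to it, and the proof involves a careful analysis at the level of Rees modules and completions (reduction to $\hbar$-torsion cases, the homogeneous-vector-bundle results of Subsection \ref{SUBSECTION_classical}, and the stabilization argument via Lemma \ref{Lem:4.3.2}). In the paper's proof of Theorem \ref{Thm:2}(5), the sentence ``the claim that the image of $\bullet_\dagger$ is closed under taking subobjects follows from Theorem \ref{Thm:surjectivity}'' is doing all the hard work that your proposal tries to replace with a dimension count. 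Without it, the argument does not go through.
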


Here $\operatorname{LAnn},\operatorname{RAnn}$ denote the left and right annihilators of a bimodule.

In fact, the functor $\bullet_\dagger$ mentioned in Theorem \ref{Thm:2} is obtained by the restriction
of an exact functor $\HC(\U)\rightarrow \HC^Q(\Walg)$ that also extends the map $\J\mapsto \J_\dagger$
between the sets of ideals. We will see in Subsection \ref{SUBSECTION_Ginzburg} that our functor $\bullet_{\dagger}$ essentially coincides with that of Ginzburg.

Finally, let us state a corollary of Theorem \ref{Thm:2} giving a sufficient condition
for semisimplicity of an object in $\HC_{\O}(\U)$. This corollary was suggested to the
author by R. Bezrukavnikov. It will be proved  in Subsection \ref{SUBSECTION_finish}.

\begin{Cor}\label{Cor:3}
Let $\M\in \HC_{\overline{\O}}(\U)$ be such that $\operatorname{LAnn}(\M),\operatorname{RAnn}(\M)$ are primitive
ideals. Then   $\M$ is semisimple in $\HC_{\Orb}(\U)$.
\end{Cor}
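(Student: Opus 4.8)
The plan is to deduce Corollary \ref{Cor:3} from Theorem \ref{Thm:2} by passing to the $W$-algebra side, where semisimplicity is much easier to detect because we are dealing with finite dimensional bimodules. First I would observe that by part (5) of Theorem \ref{Thm:2} the functor $\M\mapsto\M_\dagger$ induces an equivalence of $\HC_\Orb(\U)$ with a full subcategory $\mathcal C\subset\HC^Q_{fin}(\Walg)$ closed under subquotients; hence $\M$ is semisimple in $\HC_\Orb(\U)$ if and only if $\M_\dagger$ is semisimple in $\HC^Q_{fin}(\Walg)$ (a subquotient-closed full subcategory inherits semisimplicity of objects both ways). So it suffices to show that the finite dimensional $Q$-equivariant $\Walg$-bimodule $\N:=\M_\dagger$ is semisimple.

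Next I would bring in the annihilator information. By part (4), $\LAnn(\N)=\LAnn(\M)_\dagger$ and $\operatorname{RAnn}(\N)=\operatorname{RAnn}(\M)_\dagger$. Since $\LAnn(\M)$ and $\operatorname{RAnn}(\M)$ are primitive ideals of $\U$ with associated variety $\overline\Orb$ (the associated variety is forced to be $\overline\Orb$ because $\M\in\HC_{\overline\Orb}(\U)$ has $\mult_{\overline\Orb}(\M)\neq 0$, as otherwise $\M_\dagger=0$ by part (2) and there is nothing to prove), the ideals $\LAnn(\M)_\dagger,\operatorname{RAnn}(\M)_\dagger$ are finite codimension ideals of $\Walg$; moreover by the already-established correspondence for ideals (properties (a)--(d) of Theorem \ref{Thm:5}, together with part (1) of Theorem \ref{Thm:2} giving $(\U/\J)_\dagger=\Walg/\J_\dagger$) these should in fact be primitive ideals of $\Walg$ — this is the point where I would invoke property (d), which characterizes the fibre $\{\I\in\Id_{fin}(\Walg)\mid \I^\dagger=\J\}$ as the primitive ideals containing $\J_\dagger$, ensuring $\J_\dagger$ itself is the intersection of primitive ideals and in the finite dimensional setting is semiprime, indeed primitive. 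Thus $\N$ is a finite dimensional bimodule over $\Walg$ which is killed on the left by a primitive (maximal among finite codimension, semiprime) ideal $\I_1$ and on the right by a primitive ideal $\I_2$: that is, $\N$ is a finite dimensional module over $\Walg/\I_1\otimes(\Walg/\I_2)^{op}$.

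Now the key algebraic point: $\Walg/\I_1$ and $\Walg/\I_2$ are finite dimensional algebras, and being quotients by primitive ideals of finite codimension they are simple (a primitive ideal of finite codimension in any associative algebra is maximal, since the corresponding irreducible module is finite dimensional and faithful over the quotient, forcing the quotient to be a simple Artinian algebra $\operatorname{Mat}_n(\K)$ as $\K$ is algebraically closed). Therefore $\Walg/\I_1\otimes(\Walg/\I_2)^{op}\cong\operatorname{Mat}_{n_1}(\K)\otimes\operatorname{Mat}_{n_2}(\K)\cong\operatorname{Mat}_{n_1 n_2}(\K)$ is semisimple, so every module over it — in particular $\N$ — is semisimple as a plain bimodule. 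Finally I would check that semisimplicity is preserved when we remember the $Q$-equivariant structure: the simple summands of $\N$ as a bimodule are permuted by $Q$, and since $Q^\circ$ acts trivially on $\Id_{fin}(\Walg)$ and by inner automorphisms infinitesimally, the isotypic decomposition is $Q$-stable, so $\N$ decomposes into $Q$-equivariant simple subquotients, i.e. $\N$ is semisimple in $\HC^Q_{fin}(\Walg)$. Pulling this back through the equivalence of part (5) gives that $\M$ is semisimple in $\HC_\Orb(\U)$.

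The main obstacle I anticipate is the bookkeeping around the $Q$-equivariant structure and around whether $\J_\dagger$ for $\J$ primitive is genuinely semiprime (equivalently: whether the bimodule $\N$ really lies in a semisimple bimodule category and not merely in one with a nontrivial radical). The plain-associative-algebra argument is completely standard, so the real content is: (i) confirming via Theorem \ref{Thm:2}(4) and the ideal correspondence that both one-sided annihilators of $\N$ are intersections of finitely many maximal finite-codimension ideals — this is where property (d) of Theorem \ref{Thm:5} does the heavy lifting, reducing it to the primitivity of $\LAnn(\M),\operatorname{RAnn}(\M)$ in $\U$; and (ii) verifying that the full subcategory $\mathcal C$ of part (5), being closed under subquotients, lets semisimplicity descend and ascend cleanly. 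Once those two compatibility checks are in place the corollary follows formally.
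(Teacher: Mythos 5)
Your proof follows essentially the same route as the paper: reduce via Theorem \ref{Thm:2}(5) to semisimplicity of $\M_\dagger$ in $\HC^Q_{fin}(\Walg)$, transport the primitivity hypotheses on the annihilators through Theorem \ref{Thm:2}(4), and then argue that $\Walg$ acts on $\M_\dagger$ through finite dimensional semisimple algebras on both sides. Two small slips worth flagging, neither of which breaks the argument. First, $\J_\dagger$ for primitive $\J$ is not itself primitive in general, only semiprime: as the proof of Conjecture \ref{Conj:0} shows, it is the intersection of a single $C(e)$-orbit of primitive ideals of finite codimension (and the paper cites exactly Theorem \ref{Thm:1} together with the proof of Conjecture \ref{Conj:0} for this). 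Second, property (d) of Theorem \ref{Thm:5} by itself only identifies the primitives containing $\J_\dagger$; to conclude that $\J_\dagger$ is actually \emph{equal} to their intersection you need an extra step, e.g.\ observe that the intersection $\mathcal P$ of all such primitives is $C(e)$-stable and (by property (i)) satisfies $\mathcal P^\dagger=\J$, so $\mathcal P=\J_\dagger$ by Theorem \ref{Thm:1} — or just use the proof of Conjecture \ref{Conj:0} directly as the paper does. Once $\LAnn(\M_\dagger)$ and $\operatorname{RAnn}(\M_\dagger)$ are semiprime of finite codimension, the quotients are direct sums of matrix algebras, so $\M_\dagger$ is a semisimple bimodule, and your handling of the $Q$-equivariance and the descent through the subquotient-closed subcategory of Theorem \ref{Thm:2}(5) is correct.
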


\subsection{Further developments}\label{SUBSECTION_further}
An important open problem about the functor $\bullet_\dagger: \HC_{\O}(\U)\rightarrow \HC^Q_{fin}(\Walg)$
is to describe its image.

Perhaps, the first question one can ask  is to describe irreducible objects in $\HC^Q_{fin}(\Walg)$ lying
in the image of $\bullet_{\dagger}$. Further, a reasonable restriction is to consider only $\Walg$-bimodules
with trivial  left and right central characters. Here we use a natural identification of $\Centr(\g)$ with  the center
of $\Walg$, see Subsection \ref{SUBSECTION_Walg} for details.
 A nonzero finite dimensional $\Walg$-module with trivial central character exists if and only if $\O$ is special in the sense of Lusztig.

In a subsequent paper we will prove the following result.
\begin{Thm}\label{Thm:new1} For any
finite dimensional irreducible $\Walg$-modules $N_1,N_2$ with trivial central character  there is
a simple object $\M\in \HC_\O(\U)$ such that $\Hom_\K(N_1,N_2)$ is a direct summand of $\M_\dagger$.
\end{Thm}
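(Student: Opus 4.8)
The plan is to work backwards from the target: I want to produce a simple Harish-Chandra bimodule $\M \in \HC_\O(\U)$ whose image $\M_\dagger$ contains $\Hom_\K(N_1,N_2)$ as a direct summand. The natural candidate for $\M$ should be built from a "translation-type" or "bimodule envelope" construction applied to the pair $(N_1,N_2)$, and the key will be to run the adjunction of Theorem \ref{Thm:2}(1) together with the annihilator identities of Theorem \ref{Thm:2}(4) to control $\M_\dagger$.

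First I would let $\J_i = \Ann_\U(\Sk(N_i))$ for $i=1,2$; by property (a) of the maps $\bullet^\dagger,\bullet_\dagger$ these are primitive ideals in $\Id_\O(\U)$, and by Theorem \ref{Thm:1} (the Extended Premet conjecture, already available) together with (b) we control $\Ann_\Walg(N_i)$ up to the $C(e)$-action: the $C(e)$-orbit of $\Ann_\Walg(N_i)$ is exactly $\{\I : \I^\dagger = \J_i\}$. Now consider the $\U$-bimodule $\Hom_\K(\U/\J_2, \U/\J_1)_{\mathrm{fin}}$ — the locally finite part of $\Hom$ — which is the standard "big" Harish-Chandra bimodule associated to the pair of primitive ideals; its associated variety is contained in $\overline{\Orb}$, so it lies in $\HC_{\overline\Orb}(\U)$, and I would take $\M$ to be an appropriate simple subquotient of its image in $\HC_\Orb(\U)$. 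The point of passing to $\HC_\Orb(\U)$ is Theorem \ref{Thm:2}(5): $\bullet_\dagger$ restricted there is an equivalence onto a subcategory closed under subquotients, so a simple object of $\HC_\Orb(\U)$ goes to a simple, hence semisimple, object of $\HC^Q_{fin}(\Walg)$, and simple pieces there are of the form $\Hom_\K(N,N')$ for $N,N'$ irreducible $\Walg$-modules — this is where the desired shape $\Hom_\K(N_1,N_2)$ enters. So the real content is to pin down \emph{which} simple summand to take.

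The mechanism for that is the annihilator bookkeeping. Applying $\bullet_\dagger$ to $\M' := \Hom_\K(\U/\J_2,\U/\J_1)_{\mathrm{fin}}$ and using (4), $\operatorname{LAnn}(\M')_\dagger = \operatorname{LAnn}(\M'_\dagger)$ and $\operatorname{RAnn}(\M')_\dagger = \operatorname{RAnn}(\M'_\dagger)$; since $\operatorname{LAnn}(\M') = \J_1$ and $\operatorname{RAnn}(\M') = \J_2$, this forces the left and right annihilators of $\M'_\dagger$ to be $(\J_1)_\dagger$ and $(\J_2)_\dagger$. By Theorem \ref{Thm:2}(1), $(\U/\J_i)_\dagger = \Walg/(\J_i)_\dagger$, and by Theorem \ref{Thm:1} again $(\J_i)_\dagger = (\Ann_\Walg N_i \text{-orbit sum})$ — more precisely $\codim (\J_i)_\dagger = \mult_{\overline\Orb}(\U/\J_i)$ by (c), which counts the $C(e)$-orbit of $N_i$ with multiplicities. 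The upshot is that $\M'_\dagger$ is a finite-dimensional $\Walg$-bimodule annihilated on the left by $(\J_1)_\dagger$ and on the right by $(\J_2)_\dagger$, hence a submodule of $\Hom_\K(\Walg/(\J_2)_\dagger, \Walg/(\J_1)_\dagger)$; after decomposing into isotypic blocks for the $C(e)$-action one extracts the block $\Hom_\K(N_2^{\m_\chi}, N_1^{\m_\chi})$, and then lifts the corresponding simple subquotient back through the equivalence of (5) to get the desired simple $\M \in \HC_\Orb(\U)$.

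The main obstacle I anticipate is showing that the summand $\Hom_\K(N_1,N_2)$ actually \emph{survives} as a direct summand of $\M_\dagger$ for an appropriately chosen \emph{simple} $\M$, rather than merely appearing as a subquotient of $\M'_\dagger$ for the reducible bimodule $\M'$. This requires knowing that $\M'_\dagger$, as a finite-dimensional and therefore semisimple-enough bimodule, decomposes compatibly with a decomposition of $\M'$ in $\HC_\Orb(\U)$ into simples; concretely one needs that $\bullet_\dagger$, being exact and a tensor functor (Theorem \ref{Thm:2}(1),(3)), carries the socle filtration of $\M'$ in $\HC_\Orb(\U)$ to a filtration of $\M'_\dagger$ whose graded pieces are the $\Hom_\K(N,N')$, and that $\Hom_\K(N_1,N_2)$ is one of them with the nonzero map $N_1 \to N_2$ part realized — equivalently, that the "diagonal" Harish-Chandra bimodule component linking $\J_1$ to $\J_2$ is nonzero in $\HC_\Orb(\U)$. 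That nonvanishing is exactly the nontrivial input (it is false that an arbitrary pair of primitive ideals with the same associated variety $\overline\Orb$ is linked by a nonzero HC bimodule unless one already knows something), and handling it will likely use the trivial-central-character hypothesis: restricting to $\Centr(\g)$ acting by the trivial character makes $\J_1,\J_2$ live in the principal block, where the theory of Harish-Chandra bimodules over $\U$ (translation functors, the structure of $\HC$ with fixed regular central characters) guarantees that simple modules with the same associated variety are connected, so the linking bimodule is nonzero — and this is presumably the part deferred to "a subsequent paper."
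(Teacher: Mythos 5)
The paper does not contain a proof of Theorem \ref{Thm:new1}: it is explicitly stated as a result to be proved ``in a subsequent paper,'' and no argument for it appears anywhere in this document. So there is no paper proof for your proposal to be compared against. Your proposal therefore has to stand on its own, and as it stands it is an outline with an acknowledged unresolved gap rather than a proof.

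You yourself identify the crux --- one must know that the relevant linking Harish-Chandra $\U$-bimodule is nonzero, i.e.\ that $\J_1=\Ann_\U\Sk(N_1)$ and $\J_2=\Ann_\U\Sk(N_2)$ are linked in $\HC_\Orb(\U)$ by a nonzero simple object --- and you explicitly leave that to ``translation functors, the structure of $\HC$ with fixed regular central characters.'' That step is not an afterthought: it is the entire mathematical content of the theorem, and the machinery of Theorem \ref{Thm:2} by itself cannot produce it. (Nothing in Theorems \ref{Thm:1}--\ref{Thm:2} says that two primitive ideals in $\Id_\Orb(\U)$ with the same central character are connected by a nonzero $\M\in\HC_\Orb(\U)$; this requires genuine input from the classification of Harish-Chandra bimodules, e.g.\ via the $W$-graph/cell picture or the results of \cite{BFO1},\cite{BFO2} alluded to in Subsection \ref{SUBSECTION_further}.) So the proposal names the gap but does not close it.

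Beyond that there are two technical soft spots in the part you do write out. First, you assert without justification that $\M':=\Hom_\K(\U/\J_2,\U/\J_1)_{\g-l.f.}$ lies in $\HC_{\overline\Orb}(\U)$; that it is a $\U/\J_1$-$\U/\J_2$-bimodule gives $\VA(\M')\subset\overline\Orb$ once finite generation is known, but finite generation of this Hom-space (equivalently, that it is Harish-Chandra) is itself a nontrivial theorem, and more importantly $\M'$ could a priori be zero when $\J_1\ne\J_2$ --- exactly the nonvanishing issue again. Second, even granting a nonzero simple $\M\in\HC_\Orb(\U)$ with $\operatorname{LAnn}(\M_\dagger)=\Ann_\Walg N_1$-type control, you still need that the specific summand $\Hom_\K(N_1,N_2)$ (and not merely some $C(e)$-translate $\Hom_\K(c.N_1,c'.N_2)$) occurs in the simple $Q$-equivariant object $\M_\dagger$. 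Since simple objects of $\HC^Q_{fin}(\Walg)$ are $C(e)$-isotypic and you only control annihilators up to the $C(e)$-action via Theorem \ref{Thm:1} and Theorem \ref{Thm:5}(v), the ``extract the right isotypic block'' step would need to be made precise; it is plausible but not automatic. In short, the plan is a sensible one, and the adjunction/annihilator bookkeeping via Theorem \ref{Thm:2}(1),(4),(5) is the right scaffolding, but the load-bearing nonvanishing statement and the $C(e)$-matching are left open, and those are precisely where the theorem lives.
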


We remark that Theorem \ref{Thm:2} implies that $\M_\dagger$ is  simple in $\HC^Q_{fin}(\Walg)$
and so is semisimple as a $\Walg$-bimodule. Theorem \ref{Thm:new1} can be generalized to any
(possibly singular) integral central character but it fails for a non-integral one.

Theorem \ref{Thm:new1} establishes an interesting relationship between the
functor $\bullet_\dagger$ and the work of Bezrukavnikov, Finkelberg and Ostrik, \cite{BFO1},\cite{BFO2}.
In those papers  they proved (among other things) that the following two monoidal categories are "almost isomorphic":
\begin{itemize}
\item The subcategory $\HC^{ss}_{\O}(\U_0)$ of all semisimple objects in $\HC_\O(\U)$  with trivial left and right central characters; the claim that this category
is closed with respect to the tensor product functor follows from Corollary \ref{Cor:3}.
\item The category $\operatorname{Coh}^{A(\O)}(Y\times Y)$, where $A(\O)$ is the quotient
of $C(e)$ defined by Lusztig, see \cite{Lusztig}, and $Y$ is some finite set acted on by $A(\O)$; the notation
$\operatorname{Coh}^{A(\O)}$ means the category of all $A(\O)$-equivariant sheaves of finite dimensional vector spaces;
the tensor product on $\operatorname{Coh}^{A(\O)}(Y\times Y)$ is given by convolution.
\end{itemize}
"Almost isomorphic" means that the monoidal categories in consideration become isomorphic after modifying the associativity isomorphisms for triple tensor products in one of them, but actually in  all cases but few there is
a genuine isomorphism.

In a subsequent paper we will prove that for $Y$ we can take the set of (isomorphism classes of) irreducible finite dimensional
$\Walg$-modules with trivial central character (and so, in particular, that the $C(e)$-action on this
set factors through $A(\O)$). The first result in this direction is Theorem \ref{Thm:new1}.

Another question one can pose is whether the techniques of the present paper are specific
for the universal enveloping algebras of semisimple Lie algebras or they can be modified
to study analogous questions for other algebras. At the moment, we know one important class of algebras, where
this modification is possible: the symplectic reflection algebras (shortly, SRA) of Etingof and Ginzburg,
\cite{EG}. This is studied in our preprint \cite{SRA}.

Namely, let $V$ be a symplectic vector space and $\Gamma$ be a finite group acting on $V$
by linear symplectomorphisms. The symplectic reflection algebra $\mathcal{H}$ is a certain
flat deformation of the smash-product $SV\#\Gamma$, see \cite{EG} for a precise definition.
In the SRA setting the role of nilpotent orbits  is played by symplectic leaves of $V^*/\Gamma$.
The symplectic leaves are parametrized by conjugacy classes of  stabilizers  for the action of $\Gamma$
on $V$. Namely, let $\Leaf$ be a symplectic leaf. Pick a point $b\in V^*$, whose image in $V^*/\Gamma$
lies in $\Leaf$. Then to $\Leaf$ we assign the stabilizer $\underline{\Gamma}$ of $b$ in
$\Gamma$, the stabilizer is defined uniquely up to $\Gamma$-conjugacy.
The role of $Q$ is played by $N_\Gamma(\underline{\Gamma})$ and the role of $C(e)$ by $N_\Gamma(\underline{\Gamma})/\underline{\Gamma}$. 

It turns out that for SRA we have complete analogs of Theorems \ref{Thm:1},\ref{Thm:2}.
Both the constructions of maps and functors for SRA and the proofs that these maps and functors
have the required properties are very similar to (but more complicated technically than)
those of the present paper.

\subsection{Notation and conventions}\label{SUBSECTION_notation}
Let us explain several notions used below in the text.

{\bf Adjoint actions on bimodules.} Let $\A$ be an associative
algebra and $\M$ be an $\A$-bimodule. For $a\in \A$
we write $[a,m]:=am-ma$.

{\bf Associated varieties and multiplicities.} Let $\A$ be an
associative algebra equipped with an increasing filtration $\F_i\A$.
We suppose that $\gr\A:=\sum_{i\in \Z}\F_i\A/\F_{i-1}\A$ is a
Noetherian commutative algebra. Now let $\M$ be a filtered
$\A$-module such that $\gr\M$ is a finitely generated $\gr\A$-module. By
the {\it associated variety} $\VA(\M)$ of $\M$ we mean the support of $\gr\M$ in $\Spec(A)$.
Moreover, since $\gr\M$ is finitely generated, there is a $\gr\A$-module
filtration $\gr\M=M_0\supset M_1\supset M_2\supset\ldots\supset
M_k=\{0\}$ such that $M_i/M_{i+1}=(\gr\A)/\p_i$, where $\p_i$ is a
prime ideal in $\gr\A$. For an irreducible component $Y$ of
$\VA(\M)$ we write $\mult_Y\M$  for the number of indexes $i$ such that $\p_i$ coincides with the prime ideal
corresponding to $Y$. The number $\mult_Y\M$ is called the {\it
multiplicity} of $\M$ at $Y$. It is known that $\VA(\M)$ and
$\mult_Y\M$ do not depend on the choices we made.
When $\M$ is an $\A$-bimodule, $\VA(\M)$ stands for the associated variety of $\M$
regarded as a left $\A$-module.

Now let $\A_\hbar$ be an associative $\K[\hbar]$-algebra such that $\A_\hbar/(\hbar)$
is Noetherian and commutative. For a finitely generated $\A_\hbar$-module $\M_\hbar$
let $\VA(\M_\hbar)$ stand for the support of $\M_\hbar/\hbar \M_\hbar$ in $\Spec(\A_\hbar/(\hbar))$.
 For a component  $Y\subset\VA(\M_\hbar)$ one
sets $\mult_Y \M_\hbar:=\mult_Y (\M_\hbar/\hbar\M_\hbar)$.


{\bf Locally finite parts}. Let $\g$ be some Lie algebra and let $M$
be a module over $\g$. By the locally finite (shortly, l.f.) part of
$M$ we mean the sum of all finite dimensional $\g$-submodules of
$M$. Similarly, if $G$ is an algebraic group acting on $M$, then the $G$-locally finite part
of $M$ is the sum of all finite dimensional $G$-submodules, where the action of
$G$ is algebraic.

{\bf $\hbar$-saturated subspaces}. Let $V$ be a $\K[\hbar]$-module. We
say that a submodule $U\subset V$ is {\it $\hbar$-saturated} if $\hbar v\in U$
implies $v\in U$ for all $v\in V$.

{\bf Group actions on algebras, schemes, modules etc.} All algebraic group
actions considered in this paper are either algebraic or pro-algebraic. In the case of
algebras or modules "algebraic" means "locally finite" (see above). "Pro-algebraic" means
that an algebra is an inverse limit of locally finite $G$-modules. In particular,
given an action of an algebraic group $G$ on a scheme $X$, for any $\xi\in \g$ the velocity vector
field $\xi_*$ on $X$ is defined.

Let $A$ be an algebra equipped with an action of a group
$G$ by algebra automorphisms. By a $G$-equivariant $A$-module we mean
an $A$-module $M$ equipped with a $G$-action such that the structure map $A\otimes M\rightarrow M$
is $G$-equivariant.

By a $G$-{\it weakly} equivariant  $A$-bimodule (compare with the terminology used for $D$-modules)
we mean a $G$-equivariant $A\otimes A^{op}$-module. The notion of a {\it $G$-equivariant} bimodule is different,
compare with Subsection \ref{SUBSECTION_HC_intro}.


Below we gather some notation used in the paper.
\begin{longtable}{p{3cm} p{12cm}}
$\widehat{\otimes}$&the completed tensor product of complete topological vector spaces/ modules.\\
$(a_1,\ldots,a_k)$& the two-sided ideal in an associative algebra generated by  elements $a_1,\ldots,a_k$.\\
 $A^\wedge_\chi$&
the completion of a commutative algebra $A$ with respect to the maximal ideal
of a point $\chi\in \Spec(A)$.\\
$\Ann_\A(\M)$& the annihilator of an $\A$-module $\M$ in an algebra
$\A$.\\
$\Der(A)$& the Lie algebra of derivations of an algebra $A$.
\\
$G*_HV$&$:=(G\times V)/H$: the homogeneous vector bundle over $G/H$ with fiber $V$.\\
$g*_Hv$&the class of $(g,v)\in G\times V$ in $G*_HV$.\\
$G_x$& the stabilizer of $x$ in $G$.\\
$\Goldie(\A)$& the Goldie rank of a prime Noetherian algebra $\A$.\\
$\gr \A$& the associated graded vector space of a filtered
vector space $\A$.\\
$I(Y)$& the ideal in $\K[X]$ consisting of all functions vanishing
on $Y$ for a subvariety $Y$ in an affine variety $X$.\\
$\Id(\A)$& the set of all (two-sided) ideals of an algebra $\A$.\\
$\M_{\g-l.f.}$& the locally finite part of a
$\g$-module $\M$.\\ $R_\hbar(\A)$&$:=\bigoplus_{i\in
\mathbb{Z}}\hbar^i \F_i\A$ :the Rees $\K[\hbar]$-module of a filtered
vector space $\A$.
\\$U(\g)$& the universal enveloping algebra of a Lie algebra $\g$.
\\$\VA(\M)$& the associated variety of $\M$.\\
$\Centr(\g)$& the center of $U(\g)$.\\
$\Gamma(X,\mathcal{F})$& the space of global sections of a sheaf
$\mathcal{F}$ on $X$.
\end{longtable}

\subsection{Key ideas  and the content of the paper}
Below in this subsection we will briefly and somewhat informally  outline some key ideas and constructions of the paper. In the end of the subsection we will describe the structure of the paper.
\subsubsection{Auxiliary algebras and categories}
To construct the functor $\bullet_\dagger: \HC(\U)\rightarrow \HC^Q(\Walg)$
mentioned after Theorem \ref{Thm:2} we need certain intermediate algebras and their
categories of Harish-Chandra bimodules.

First of all, we need the {\it homogenizations}
$\U_\hbar,\Walg_\hbar$ of the algebras $\U,\Walg$ (to be defined formally in Subsections \ref{SUBSECTION_Walg},
\ref{SUBSECTION_decomp}). For example, $\U_\hbar$ is a graded $\K[\hbar]$-algebra such that $\hbar$ has degree 1
and $\U_\hbar/(\hbar-1)=\U$. In other words, $\U_\hbar$ is obtained from $\U$
by using the  Rees construction. Another way to view $\U_\hbar$ is as follows: as a graded
vector space $\U_\hbar$ is the same as $\K[\g^*][\hbar]$ but the product on $\U_\hbar$ is
a graded deformation of the usual commutative product.

One can introduce the notions of  Harish-Chandra bimodules
for $\U_\hbar,\Walg_\hbar$, Subsection \ref{SUBSECTION_completions}. Any Harish-Chandra bimodule, say for $\U_\hbar$, is obtained by using the Rees construction from a Harish-Chandra $\U$-bimodule equipped with a so called {\it good filtration}. The categories of Harish-Chandra $\U_\hbar$- and $\Walg_\hbar$-bimodules will be denoted
by $\HC(\U_\hbar),\HC(\Walg_\hbar)$.

Together with the algebras $\U_\hbar,\Walg_\hbar$ we will consider their completions
$\U_\hbar^\wedge,\Walg_\hbar^\wedge$ at the point $\chi:=(e,\cdot)\in \Orb$, where we consider $\Orb$
as an orbit in $\g^*$. Basically, the possibility
to complete at $\chi$ is a  reason why we need the homogenized algebras.

As well as for
commutative algebras, the completions are obtained by taking certain inverse limits
of the quotients of $\U_\hbar,\Walg_\hbar$ by the powers of appropriate maximal
ideals. But, informally, one can think that $\U_\hbar^\wedge$ is identified with
$\K[\g^*]^{\wedge}_\chi[\hbar]$. The product on $\U_\hbar^\wedge$ is a unique
continuous extension of the product from $\U_\hbar=\K[\g^*][\hbar]$
\footnote{Other kinds of completions
of $\U_\hbar,\Walg_\hbar$ appeared in the literature.
For example, Dodd and Kremnizer use some completions in \cite{DK}. However,
those are the $\hbar$-adic completions and their elements, in a sense,
are formal power series in $\hbar$ but are polynomial in the other variables. The completions
we use consist of formal power series in all variables and so are larger than those of
Dodd and Kremnizer.}.

In the sequel the algebras $\U_\hbar,\Walg_\hbar,\U^\wedge_\hbar,\Walg^\wedge_\hbar$ sometimes will
be called {\it quantum algebras}.

One of the main  reason  to consider  the completions $\U_\hbar^\wedge,\Walg^\wedge_\hbar$
is that they are related in a surprisingly simple way. Namely,  $\U_\hbar^\wedge$ is decomposed into the completed tensor product
of $\W_\hbar^\wedge$ and  $\Walg^\wedge_\hbar$, where $\W_\hbar^\wedge$ is the {\it completed homogeneous Weyl algebra} of an appropriate symplectic vector space. Basically, this is the "decomposition theorem" proved in
Subsection 3.3 of \cite{Wquant}. The theorem is useful because the algebra $\W^\wedge_\hbar$
is very simple and is an infinite dimensional analog of the matrix algebra (in the modular setting
an appropriate version of the universal enveloping also gets decomposed into
a similar tensor product but there one can take the genuine matrix algebra instead of
the Weyl algebra, this observation is a cornerstone of Premet's proof of the Kac-Weisfeiler
conjecture, see \cite{Premet1} for details).

One can introduce the notions
of Harish-Chandra bimodules for the algebras $\U_\hbar^\wedge,\Walg^\wedge_\hbar$.
This is done in such a way that the completion of a Harish-Chandra $\U_\hbar$- (resp. $\Walg_\hbar$-) bimodule
at $\chi$ is a Harish-Chandra $\U_\hbar^\wedge$-  (resp., $\Walg_\hbar^\wedge$-) bimodule.
The categories of Harish-Chandra $\U_\hbar^\wedge,\Walg_\hbar^\wedge$-bimodules will
be denoted by $\HC(\U_\hbar^\wedge),\HC(\Walg_\hbar^\wedge)$. These categories come equipped with the {\it completion functors}
$\HC(\U_\hbar)\rightarrow \HC(\U_\hbar^\wedge), \HC(\Walg_\hbar)\rightarrow \HC(\Walg_\hbar^\wedge)$.
As one expects, these functors are exact.

Now let us see what the decomposition theorem implies for
the categories $\HC(\Walg_\hbar^\wedge),\HC(\U_\hbar^\wedge)$. Above we made a speculative claim that
$\W_\hbar^\wedge$ is an infinite dimensional analog of a matrix algebra. So taking the
(completed) tensor product with $\W_\hbar^\wedge$ should give rise to a Morita equivalence.
This is not really so but, at least, we do get  an equivalence between $\HC(\U_\hbar^\wedge)$
and $\HC(\Walg_\hbar^\wedge)$ in this way, see  Proposition \ref{Prop:3.4.1} for an enhanced version
of this equivalence.

Now one can ask whether the completion functors are equivalences. It happens
that the answer is positive on the $\Walg$-side and negative on the $\U$-side.
The reason is that the algebra $\Walg_\hbar$ is positively graded, and the ideal
of $\chi$ is graded. So a quasi-inverse  to the completion functor is just
given by taking elements "of finite degree", see Proposition \ref{Prop:3.4.1}.

\subsubsection{A functor $\HC(\U_\hbar)\rightarrow\HC^Q(\Walg_\hbar)$}
Now we are ready to establish an exact functor $\HC(\U_\hbar)\rightarrow \HC(\Walg_\hbar)$.
The functor in interest is the composition of the completion functor $\HC(\U_\hbar)\rightarrow
\HC(\U_\hbar^\wedge)$ and the composition of the equivalences $\HC(\U_\hbar^\wedge)\rightarrow
\HC(\Walg_\hbar^\wedge)\rightarrow \HC(\Walg_\hbar)$.


As we mentioned before, the completion functor $\HC(\U_\hbar)\rightarrow \HC(\U_\hbar^\wedge)$
is not an equivalence. The argument we used for $\Walg_\hbar$ does not work here:
we still have some grading on $\U_\hbar$ such that the ideal of $\chi$ is graded
but this grading has both positive and negative components. There is a way to
improve the functor: it turns out that so far we have missed some structure
on the completion $\M_\hbar^\wedge$ of an object $\M_\hbar\in \HC(\U_\hbar)$.
Namely, the group $Q$ introduced in Subsection \ref{SUBSECTION_fin_dim} stabilizes $\chi$ and so acts naturally
on $\M_\hbar^\wedge$  making $\M_\hbar^\wedge$ a $Q$-equivariant Harish-Chandra
$\U^\wedge_\hbar$-bimodule in some precise sense, see Subsection \ref{SUBSECTION_completions}.
So we need to consider the category $\HC^Q(\U_\hbar^\wedge)$ of $Q$-equivariant Harish-Chandra
bimodules and also the similar categories $\HC^Q(\Walg^\wedge_\hbar),\HC^Q(\Walg_\hbar)$.
We remark that any object in $\HC(\U_\hbar)$ is also $Q$-equivariant but the action of
$Q$ is completely recovered from the adjoint action of $\g$ and so we do not need
to specify it.

As before, we have equivalences $\HC^Q(\Walg_\hbar)\cong \HC^Q(\Walg_\hbar^\wedge)\cong \HC^Q(\U_\hbar^\wedge)$, Proposition \ref{Prop:3.4.1}, and hence an exact functor $\HC(\U_\hbar)\rightarrow \HC^Q(\Walg_\hbar)$.

This functor is still not an equivalence. For example, it kills any Harish-Chandra
$\U_\hbar$-bimodule, whose associated variety does not contain $\chi$
(i.e., any bimodule that is zero in a neighborhood of $\chi$). However,
the functor in interest, and especially the induced functor
$\HC_\Orb(\U_\hbar)\rightarrow \HC_{fin}^Q(\Walg_\hbar)$, enjoys many properties
of an equivalence. To describe these properties it will be more convenient for us
to consider the completion functor $\HC(\U_\hbar)\rightarrow \HC(\U_\hbar^\wedge)$,
which differs from ours by an equivalence.

\subsubsection{A functor $\HC^Q(\U^\wedge_\hbar)\rightarrow \HC(\U_\hbar)$}\label{SSS_3}
For instance, there is a right adjoint functor $\HC^Q(\U^\wedge_\hbar)\rightarrow \HC(\U_\hbar)$.
This functor is produced in a pretty standard way.
Namely, given $\M_\hbar'\in \HC^Q(\U_\hbar^\wedge)$ we can take the subset $(\M_\hbar')_{l.f.}$
that is $\K^\times$- and $\g$-locally finite part of $\M_\hbar'$. Here we consider the modified version
of the adjoint $\g$-action: $(\xi,m)\mapsto \frac{1}{\hbar^2}(\xi m-m\xi)$.

Of course, the assignment $\M_\hbar'\mapsto
(\M_{\hbar}')_{l.f.}$ is functorial but this is not yet a functor we need.
The reason is that $(\M_\hbar')_{l.f.}$ comes equipped with two
different, in general, $Q$-actions such that $(\M_\hbar')_{l.f.}$ is an equivaraint bimodule with respect to both.
 A nice feature here
is that the two actions differ by an action of the component group $C(e):=Q/Q^\circ$
commuting with the $\U_\hbar\otimes \U_\hbar^{opp}$-action.
So to get a required functor we just  take the $C(e)$-invariants in $(\M_\hbar')_{l.f.}$.

The $\U_\hbar$-bimodule $(\M'_{\hbar})_{l.f.}^{C(e)}$  is actually finitely generated (this essentially
follows from results of Ginzburg, \cite{Ginzburg}) and  hence is Harish-Chandra.

In this paper we only need
a weaker statement: $(\M'_\hbar)_{l.f.}^{C(e)}$ is finitely generated whenever
$\M'_\hbar$ lies in the category $\HC^Q_{\Orb}(\U_\hbar^\wedge)$ of all objects
in $\HC^Q(\U_\hbar^\wedge)$ whose associated variety is contained in $\Orb$
(a more accurate way to say this would be "whose associated scheme is the formal
neighborhood of $\chi$ in $\Orb$"), see Lemma \ref{Lem:3.5.1}. A technique we use to prove this result
is also used in many other proofs of the paper so let us explain it briefly
here. A key idea is  to establish  similar properties
for  modules that are much simpler than $\M'_\hbar$ and than reduce the proof
for $\M'_\hbar$ to the proof for these modules.

Namely, as we will see, it is enough  to show that the $\K[\g^*]$-module $(\M'_{\hbar})_{l.f.}^{C(e)}/\hbar (\M'_{\hbar})_{l.f.}^{C(e)}$
is finitely generated. This module embeds into $(\M_{\hbar}'/\hbar \M_\hbar')^{C(e)}_{l.f.}$ (where $\M_{\hbar}'/\hbar \M_\hbar'$  is regarded as a $\K[\g^*]^{\wedge}_\chi$-module equipped with additional $Q$-,
$\g$- and $\K^\times$-actions that are compatible in some precise
way). Now, $\M_{\hbar}'/\hbar \M_\hbar'$ is supported on
$\Orb$. As such, it has a finite filtration, whose successive quotients are
$\K[\Orb]^\wedge_\chi$-modules. So we need to prove that for any finitely generated $Q$- and $\g$-equivariant $\K[\Orb]^{\wedge}_\chi$-module its submodule of $\g$-l.f.  elements is finitely generated over $\K[\overline{\Orb}]$. This statement is proved in three steps.
First of all, one shows that any finitely generated $Q$- and $\g$-equivariant $\K[\Orb]^\wedge_\chi$-module $N$
is, in fact, the restriction of a $G$-equivariant coherent sheaf $\mathcal{F}$ on   $\Orb$ to the formal
neighborhood of $\chi$. Next, one shows that the subspace of $C(e)$-invariant $\g$-locally finite elements
in $N$ coincides with the space $\Gamma(\Orb,\mathcal{F})$.
Finally, one uses the fact that the latter is a finitely generated $\K[\overline{\Orb}]$-module.

\subsubsection{The surjectivity property}\label{SSS_4}
Now let us comment on the most important  property of the completion functor
$\HC(\U_\hbar)\rightarrow \HC^Q(\U_\hbar^\wedge)$ ("surjectivity"), which is used to
prove Theorem \ref{Thm:1} and also the last part of Theorem \ref{Thm:2}.

  Suppose we are given a Harish-Chandra $\U_\hbar$-bimodule
$\M_\hbar$ and a $Q$-,$\K^\times$- and $\g$-stable sub-bimodule $\N'_\hbar$
of the completion $\M_\hbar^\wedge$ such that the quotient $\M^\wedge_\hbar/\N_\hbar$ is
supported on $\O$. Then there is a sub-bimodule in $\M_\hbar$, whose completion
coincides with $\N'_\hbar$. This claim is basically equivalent to Theorem \ref{Thm:surjectivity}.
In the proof we may assume that $\M_\hbar$ embeds into $\M_\hbar^\wedge$.

The proof roughly consists of two parts. First, we need to establish this property
in the case when $\N'_\hbar$ contains $\hbar^k \M^\wedge_\hbar$ for some $k$. Then
$\M^\wedge_\hbar/\N_\hbar$ has a filtration whose quotients are again finitely generated
$\K[\Orb]^\wedge_\chi$-modules. So we again need to use some properties of
coherent $G$-equivariant sheaves on $\Orb$. Second, we need to deduce the statement when  the
quotient $\M^\wedge_\hbar/\N'_\hbar$ is $\K[[\hbar]]$-flat.
Set $\N'_{\hbar,k}:=\N'_{\hbar}+\hbar^{k+1} \M^\wedge_{\hbar}, \N_{\hbar,k}:=\N'_{\hbar,k}\cap \M_\hbar$.
The first part of the proof guarantees that $\N_{\hbar,k}$ generates $\N'_{\hbar,k}$. What we need to show
is that the intersection $\bigcap_k \N_{\hbar,k}$ cannot be too small, i.e., that the sequence
$\N_{\hbar,k}$ stabilizes in some precise sense. This is done using Lemma \ref{Lem:4.3.2}.


\subsubsection{From the functors between $\HC(\U_\hbar),\HC(\Walg_\hbar)$ to  functors
between $\HC(\U),\HC^Q(\Walg)$}
Now let us explain how to pass from the functors between  $\HC(\U_\hbar)$ and $\HC^Q(\Walg_\hbar)$
to those between $\HC(\U)$ and $\HC^Q(\Walg)$. Taking the Rees bimodule gives rise to an equivalence
between the category of {\it filtered} Harish-Chandra $\U$-bimodules with good filtration and
$\HC(\U_\hbar)$. The similar statement holds for $\Walg$.
So we get functors $\bullet_{\dagger},\bullet^{\dagger}$ between the categories of  filtered
Harish-Chandra bimodules. However, it is not difficult to  see that the functors descend to the usual
categories of bimodules.

\subsubsection{The structure of the paper}
To finish the subsection let us explain how the paper is organized.

Section \ref{SECTION_prelim} contains some preliminary material. In its first subsection we
review basic properties of deformation quantization, the key
technique in the approach to W-algebras developed in \cite{Wquant}.
Also we recall some results on the existence of quantum (co)moment maps. In
Subsection \ref{SUBSECTION_Walg} we recall two definitions of
W-algebras: one due to Premet (in a variant of Gan-Ginzburg,
\cite{GG}) and one from \cite{Wquant}. Subsection \ref{SUBSECTION_decomp} recalls (and, in fact,
proves a stronger version of) the decomposition theorem. In Subsection \ref{SUBSECTION_bimod}
we prove some technical results on completions of quantum algebras. Finally in Subsection
\ref{SUBSECTION_completions} we discuss the notion of Harish-Chandra
bimodules for quantum algebras in interest.

Section \ref{SECTION_functors} is devoted mostly to constructing the
functors between the categories of bimodules. In Subsection
\ref{SUBSECTION_corr_ideals} we recall the definitions of the maps
$\I\mapsto\I^\dagger, \J\mapsto \J_\dagger$ from \cite{Wquant}.  Subsection
\ref{SUBSECTION_classical} is  technical, there we prove some
results on homogeneous vector bundles to be used both in the construction
of functors and in the proofs of the main theorems (roughly, as induction
steps, see the discussion above). Subsections \ref{SUBSECTION_Construction},\ref{SUBSECTION_construction2}
form a central part of the section: there we define the functors between
the categories of bimodules and study  basic properties of the
functors. At first, we do this on the level of quantum (=homogenized) algebras,
Subsection \ref{SUBSECTION_Construction}, and then on the level of
$\U,\Walg$, verifying  that our constructions essentially do not
depend on filtrations. In Subsection \ref{SUBSECTION_Ginzburg} we compare our construction
with Ginzburg's, \cite{Ginzburg}.

In Section \ref{SECTION_proofs} we complete the proofs of the two
main theorems. In the first subsection we  state some auxiliary
result (Theorem \ref{Thm:surjectivity}), which a straightforward generalization of
Theorem \ref{Thm:1} and also the most difficult part of Theorem \ref{Thm:2}.
Then in the second subsection we complete the proofs of Theorems \ref{Thm:1},\ref{Thm:2} using Theorem \ref{Thm:surjectivity}.

{\bf Acknowledgements.} The author is indebted to V. Ginzburg and A.
Premet for numerous stimulating discussions.

\section{Preliminaries}\label{SECTION_prelim}
\subsection{Deformation quantization and quantum comoment
maps}\label{SUBSECTION_defquant} Let $A$ be a commutative
associative $\K$-algebra with unit equipped with a Poisson bracket.

\begin{defi}\label{defi:1.1}
A map $*:A\otimes_{\K} A\rightarrow A[[\hbar]],
f*g=\sum_{i=0}^\infty D_i(f,g)\hbar^{2i}$ is called a {\it
star-product}   if it satisfies the
following conditions:
\begin{itemize}
\item[(*1)] The natural ($\K[[\hbar]]$-bilinear) extension of $*$ to $A[[\hbar]]\otimes_{\K[[\hbar]]}A[[\hbar]]$
is associative, i.e.,  $(f*g)*h=f*(g*h)$ for all $f,g,h\in
A$, and $1\in A$ is a unit for $*$.
\item[(*2)] $f*g-fg\in \hbar^2 A[[\hbar]],f*g-g*f-\hbar^2\{f,g\}\in \hbar^4 A[[\hbar]]$ for all $f,g\in A$
or, equivalently, $D_0(f,g)=fg, D_1(f,g)-D_1(g,f)=\{f,g\}$.
\end{itemize}
Star-products we deal with in this paper will satisfy the following
additional property.
\begin{itemize}
\item[(*3)] $D_i$ is a bidifferential operator of order at most $i$
in each variable ("bidifferential of order at most $i$" means that for any $f\in A$ both maps $g\mapsto D_i(g,f)$ and $g\mapsto D_i(f,g)$ are differential operators of order at most $i$).
\end{itemize}
\end{defi}
Note that usually a star-product is written as $f*g=\sum_{i=0}^\infty D_i(f,g)\hbar^i$ with
$f*g-g*f-\hbar\{f,g\}\in\hbar^2 A[[\hbar]]$. The reason why we use $\hbar^2$ instead of
$\hbar$ is that our choice is better compatible with the Rees algebra construction, which is used
to pass from a filtered $\K$-algebra to a graded $\K[\hbar]$-algebra. Namely, suppose
that we have an algebra $\A$ equipped with an increasing filtration $\F_i\A$. Then the Rees
$\K[\hbar]$-module $R_\hbar(\A)$  is naturally a graded $\K[\hbar]$-algebra. In most of our applications,
we will have $[\F_i\A,\F_j\A]\subset \F_{i+j-2}\A$ for all $i,j$. This condition translates to
$[a,b]\in \hbar^2 R_\hbar(\A)$ for any $a,b\in R_\hbar(\A)$, which is similar to (*2).

When we consider $A[[\hbar]]$ as an
algebra with respect to the star-product, we call it a {\it quantum algebra}.
If $A[\hbar]$ is a subalgebra in $A[[\hbar]]$ with respect to $*$, then we
say that $*$ is a {\it polynomial} star-product, $A[\hbar]$ is also
called a quantum algebra.

\begin{Ex}[The Weyl algebra $\W_\hbar$]\label{Ex:1.1}
Let $X=V$ be a finite-dimensional vector space equipped with a
constant nondegenerate Poisson bivector $P$.  The {\it Moyal-Weyl}
star-product on $A:=\K[V]$  is defined by
$$f*g=\exp(\frac{\hbar^2}{2}P)f(x)\otimes g(y)|_{x=y}.$$
Here $P$ is considered as an element of $V\otimes V$. This space
acts naturally on $\K[V]\otimes \K[V]$ (by contractions). The
quantum algebra $\W_\hbar:=\K[V][\hbar]$ is called the {\it
(homogeneous) Weyl algebra}.
\end{Ex}


Now we discuss group actions on quantum algebras.

Let  $G$ be an algebraic group acting on $A$ by automorphisms. It
makes sense to speak about  $G$-invariant star-products ($\hbar$ is
supposed to be $G$-invariant).


Recall that a $G$-equivariant linear map $\xi\mapsto H_\xi:
\g:=\operatorname{Lie}(G)\rightarrow A$ is said to be a {\it
comoment map} if $\{H_\xi, \bullet\}=\xi_*$ for any $\xi\in \g$.
The action  of $G$ on $A$  equipped with a comoment map is called {\it
Hamiltonian}. In the case when $A$ is finitely generated define the
{\it moment map} $\mu:\Spec(A)\rightarrow \g^*$ to be the dual map
to the comoment map $\g\mapsto A$. We remark that for a given action of
$G$ a comoment map is, in general, not unique. However, for two comoment
maps $\xi\mapsto H^1_\xi,H^2_\xi$ the element $H^1_\xi-H^2_\xi$ Poisson commutes with
$A$. In particular, if the Poisson center of $A$ consists of scalars (for example,
if $A$ is an algebra of functions on a smooth affine symplectic variety) then
two comoment maps $H^1_\bullet,H^2_\bullet$ differ by a $G$-invariant element of $\g^*$,  i.e.,
there is $\alpha\in (\g^*)^G$ with $H^1_\xi-H^2_\xi=\langle\alpha,\xi\rangle$
 for any $\xi\in \g^*$.

In the quantum situation there is an analog of a  comoment map
defined as follows: a $G$-equivariant linear map $\g\rightarrow
A[[\hbar]], \xi\mapsto \widehat{H}_\xi,$ is  said to be a {\it quantum comoment map} if
$[\widehat{H}_\xi,\bullet]=\hbar^2 \xi_*$ for all $\xi\in\g$. If the Poisson center
of $A$ consists of scalars, then two different quantum comoment maps differ by
an element of $(\g^*)^G[[\hbar]]$, compare with the previous paragraph.

Now let $\K^\times$ act on $A, (t,a)\mapsto t.a$, by automorphisms.
For instance, if $A$ is graded, $A:=\bigoplus_{i\in\Z} A_i$, we can consider
the action coming from the grading: $t.a=t^i a, a\in A_i$.
Consider the action of $\K^\times$ on $A[[\hbar]]$ given by
$t.\sum_{j=0}^\infty a_j\hbar^j=\sum_{j=0}^\infty
t^{j}(t.a_j)\hbar^j$. If $\K^\times$ acts by automorphisms of $*$,
then we say that $*$ is {\it homogeneous}. Clearly, $*$ is homogeneous if and only if
the map $D_l:A\otimes A\rightarrow A$ is homogeneous of degree $-2l$.

The following theorem on existence of star-products and quantum
comoment maps incorporates results of Fedosov,
\cite{Fedosov1}-\cite{Fedosov3}, in the form we need.

\begin{Thm}\label{Thm:3}
Let $X$ be a smooth affine  variety  equipped with
\begin{itemize}\item a symplectic form $\omega$, \item a Hamiltonian action of
a reductive group $G$, $\xi\mapsto H_\xi$ being a comoment map,
\item and an action of the one-dimensional torus $\K^\times$ by
$G$-equivariant automorphisms  such that $t.\omega=t^2\omega,
t.H_\xi=t^2H_\xi$.\end{itemize} Then there exists a $G$-invariant
homogeneous star-product $*$ on $\K[X]$ satisfying the additional
condition (*3) and such that $\xi\mapsto H_\xi$ is a quantum
comoment map.
\end{Thm}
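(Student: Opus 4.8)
The plan is to invoke Fedosov's deformation quantization machinery, which produces a canonical star-product once one fixes certain auxiliary data (a symplectic connection and a formal series of closed two-forms), and then to check that the $G$-equivariant and $\K^\times$-homogeneity structure can be carried along by choosing this data equivariantly. First I would recall Fedosov's construction: on a symplectic manifold $X$ one picks a torsion-free symplectic connection $\nabla$, forms the Weyl algebra bundle $\mathcal{W}$ with fiber the formal Weyl algebra of $(T_xX,\omega_x)$, and builds an abelian connection $D=\delta+[\gamma,\cdot]$ of square zero whose flat sections are identified with $\K[X][[\hbar]]$; transporting the fiberwise Moyal product gives $*$. The characteristic class of $*$ is $[\omega]/\hbar^2+(\text{a formal series of closed two-forms})$; choosing the correction series to be zero gives a canonical $*$ attached to $\nabla$. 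Because we want condition (*3), I would note that Fedosov's $D_i$ are visibly bidifferential of order $\le i$ — this is immediate from the recursion defining $\gamma$, since each step raises the degree in the Weyl-bundle filtration by one.

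Next I would address $G$-invariance. Since $G$ is reductive (hence linearly reductive in characteristic zero) and acts on the affine variety $X$ preserving $\omega$, one can average any symplectic connection over $G$ to obtain a $G$-invariant torsion-free symplectic connection $\nabla$; Fedosov's construction is functorial in $\nabla$ and in symplectomorphisms, so the resulting $*$ is automatically $G$-invariant. For the $\K^\times$-action: the hypothesis $t.\omega=t^2\omega$ together with the convention $t.\hbar^j a_j\mapsto t^j(t.a_j)\hbar^j$ means that $\omega/\hbar^2$ is $\K^\times$-invariant, so if we also choose $\nabla$ to be $\K^\times$-invariant (average over the compact torus $S^1\subset\K^\times$, or directly use homogeneity), Fedosov's construction yields a homogeneous $*$; equivalently $D_l$ has degree $-2l$, which is exactly the homogeneity statement. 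Care is needed because $\K^\times$ is not compact, but its action is algebraic and everything in sight is polynomial, so homogeneous averaging makes sense degree by degree.

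The remaining and most substantive point is the quantum comoment map. Given the classical comoment map $\xi\mapsto H_\xi$ with $\{H_\xi,\cdot\}=\xi_*$, I would look for $\widehat H_\xi=H_\xi+\sum_{i\ge 1}\hbar^{2i}H_\xi^{(i)}$ with $[\widehat H_\xi,\cdot]=\hbar^2\xi_*$. The obstruction to correcting order by order is that $\operatorname{ad}_*(H_\xi)/\hbar^2-\xi_*$ is, at each order in $\hbar^2$, a derivation of $(\K[X],\cdot)$ commuting appropriately, i.e.\ a Hamiltonian vector field (since $X$ is symplectic, all symplectic vector fields on an affine symplectic variety with $H^1_{dR}$ trivial — or more carefully, the relevant cohomology — are Hamiltonian), so one can solve for $H_\xi^{(i)}$; the map $\xi\mapsto H_\xi^{(i)}$ can then be made $G$-equivariant by averaging over $G$, using that the Poisson-center ambiguity is a coboundary that can be killed $G$-equivariantly. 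The $\K^\times$-equivariance $t.\widehat H_\xi=t^2\widehat H_\xi$ is arranged by selecting each $H_\xi^{(i)}$ homogeneous of the correct degree, which is forced once $*$ is homogeneous and $H_\xi$ has degree $2$. I expect the main obstacle to be precisely the bookkeeping that the existence of the correction terms $H_\xi^{(i)}$ and the $G$-equivariant, $\K^\times$-homogeneous choices can be made \emph{simultaneously} — i.e.\ that the relevant obstruction classes vanish not just abstractly but in the $G$-equivariant, graded sense; this is where I would cite Fedosov's results on quantum moment maps (\cite{Fedosov2},\cite{Fedosov3}) in the equivariant form, reducing the graded-equivariance to the linear reductivity of $G$ and the positivity/finiteness of the grading on $\K[X]$ in each fixed $\hbar$-degree.
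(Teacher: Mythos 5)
Your overall plan — invoke Fedosov's construction with equivariantly chosen data — is exactly what the paper does; in fact the paper gives no proof at all, only the citation to Fedosov (\cite{Fedosov1}--\cite{Fedosov3}) and to \cite{Wquant}, Subsection 2.2, for the transfer from $C^\infty$ to the algebraic setting. Your treatment of $G$-invariance (Reynolds averaging of a symplectic connection, legitimate because $G$ is linearly reductive and $\K[X]$ is a rational $G$-module) and of $\K^\times$-homogeneity (replace the $S^1$-averaging, which is unavailable over a general $\K$, by projection to weight zero) is fine. Condition (*3) does indeed drop out of the Fedosov recursion, and this is what \cite{Wquant} checks in the algebraic setting.

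However, your discussion of the quantum comoment map contains a genuine gap, and it is in fact the part the paper is most careful to flag. First, the theorem asserts that the \emph{classical} map $\xi\mapsto H_\xi$, with no $\hbar$-corrections, is already a quantum comoment map for $*$; you instead set out to produce a corrected $\widehat H_\xi=H_\xi+\sum_{i\ge1}\hbar^{2i}H_\xi^{(i)}$. Degree reasons do not rescue this: the homogeneity constraint forces $\deg H_\xi^{(i)}=2-2i\le 0$, and for the varieties actually used in the paper ($T^*G$ and $G\times S$ with the Kazhdan action) the grading on $\K[X]$ is \emph{not} bounded below, so there is no way to conclude $H_\xi^{(i)}=0$ from homogeneity. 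Second, your order-by-order obstruction argument needs every symplectic vector field arising as an error term to be Hamiltonian — you yourself flag the dependence on $H^1_{\mathrm{dR}}(X)$ — but no such hypothesis is made in the theorem, and it is false for a general smooth affine symplectic $X$. Fedosov's Theorem 2 in \cite{Fedosov3}, which the paper cites and which is the only correct way to close the argument, is \emph{not} an obstruction-theoretic correction of $H_\xi$: one builds the abelian Fedosov connection from a $G$-invariant symplectic connection with Weyl curvature $\omega$ and no higher corrections, and the flat section of the Weyl bundle extending $H_\xi$ then \emph{is} the inner generator of the lift of $\xi_*$, so that $[H_\xi,\cdot]=\hbar^2\xi_*$ holds exactly. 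This is a structural statement about the Fedosov connection; it requires no cohomological vanishing and produces no correction terms. Your sketch should replace the order-by-order argument with an appeal to this structural fact, and drop the $H^1_{\mathrm{dR}}$ hypothesis as it plays no role in the actual proof.
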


For instance, in Example \ref{Ex:1.1},  $*$ satisfies the
conditions of Theorem \ref{Thm:3} with $G=\Sp(V)$ and the action of $\K^\times$ given by
$t.v=t^{-1}v$. Note that $H_\xi(v)=\frac{1}{2}\omega(\xi v,v), \xi \in \mathfrak{sp}(V), v\in V$.

\begin{Rem}\label{Rem:3}
Actually, for a Fedosov star-product $*$ one has $D_i(f,g)=(-1)^iD_i(g,f)$. This is proved, for instance,
in \cite{BW}, Lemma 3.3. In particular, $D_1(f,g)=\frac{1}{2}\{f,g\}$ and $D_2$ is symmetric.
\end{Rem}

Fedosov proved an analog of Theorem  \ref{Thm:3}
in the $C^\infty$-setting. The explanation why  his results
(except that on a quantum comoment map) hold in the algebraic setting together with references can be found in \cite{Wquant}, Subsection 2.2. The statement on the quantum comoment  map is Theorem 2 in \cite{Fedosov3}.
Its proof can be transferred to the algebraic setting directly.


Note also that, since $*$ is $G$-invariant, we get a well-defined
star-product on $\K[X]^G$. In this way, taking $X=T^*G$ and
replacing $G$ with $G\times G$, one gets a $G$-invariant star-product on
$S(\g)=\K[\g^*]$. The corresponding quantum algebra will be denoted
by $\U_\hbar$. This notation is justified by the observation that
$\U_\hbar/(\hbar-1)\cong \U$, see \cite{Wquant}, Example 2.2.4, for
details. We will encounter another example of this construction in
the following subsection.

\subsection{W-algebras}\label{SUBSECTION_Walg}
In this subsection we review the definitions of W-algebras due to
Premet, \cite{Premet1}, and the author, \cite{Wquant}.

Recall that  a nilpotent element $e\in \g$ is fixed and  $G$ denotes the simply connected algebraic group
with Lie algebra $\g$, $\Orb:=Ge$. Choose an
$\sl_2$-triple $(e,h,f)$ in $\g$ and set $Q:=Z_G(e,h,f)$. Also
introduce a grading on $\g$ by eigenvalues of $\ad h$:
$\g:=\bigoplus_{i\in \Z} \g(i), \g(i):=\{\xi\in\g| [h,\xi]=i\xi\}$. Since $h$ is the image of a coroot
under a Lie algebra homomorphism $\sl_2\rightarrow \g$, we see that there is a unique one-parameter
subgroup $\gamma:\K^\times\rightarrow G$ with $d_1\gamma(1)=h$.

The Killing form $(\cdot,\cdot)$ on $\g$ allows to identify
$\g\cong\g^*$, let $\chi=(e,\bullet)$ be an element of $\g^*$
corresponding  to $e$. Identify $\Orb$ with $G\chi$. Note that
$\chi$ defines a symplectic form $\omega_\chi$ on $\g(-1)$ as
follows: $\omega_\chi(\xi,\eta)=\langle \chi,[\xi,\eta]\rangle$. Fix
a lagrangian subspace $l\subset \g(-1)$ with respect to $\omega_\chi$ and set
$\m:=l\oplus\bigoplus_{i\leqslant -2} \g(i)$. Define the affine
subspace $\m_\chi\subset\g$ as in Subsection \ref{SUBSECTION_W_intro}. Then, by definition, the
$W$-algebra $\Walg$ associated with $(\g,e)$ is $(\U/\U\m_\chi)^{\ad \m}:=\{a+ U\m_\chi| [\m,a]\subset \U\m_\chi\}$.

Let us introduce a  filtration on $\Walg$. We have the standard PBW
filtration on $\U$ (by the order of a monomial) denoted by $\F^{st}_i
\U$. The {\it Kazhdan filtration}  $\KF_i\U$ is defined by $\KF_i
\U:=\sum_{2k+j\leqslant i}\F^{st}_k\U\cap \U(j)$, where $\U(j)$ is the
eigenspace of $\ad h$ on $\U$ with eigenvalue $j$. Note that the associated graded
algebra of $\U$ with respect to the Kazhdan filtration is still naturally isomorphic
to the symmetric algebra $S(\g)$. Being a subquotient of $\U$, the
$W$-algebra $\Walg$ inherits the Kazhdan filtration (denoted by $\KF_i
\Walg$). It is easy to see that $\KF_0\U\subset \U\m_\chi+\K$ and hence $\KF_0\Walg=\K$.

There are two disadvantages of this definition of $\Walg$. First, formally it
depends on a choice of $l\subset\g(-1)$. Second,  one cannot see
an action of $Q$ on $\Walg$ from it. Both disadvantages are remedied
by a ramification of Premet's definition given by Gan and Ginzburg
in \cite{GG}. Namely, they checked that there is a natural
isomorphism $(\U/\U\g_{\leqslant -2,\chi})^{\ad \g_{\leqslant
-1}}\rightarrow \Walg$, where $\g_{\leqslant k}:=\sum_{i\leqslant
k}\g(i),\g_{\leqslant -2,\chi}:=\{\xi-\langle\chi,\xi\rangle| \xi\in
\g_{\leqslant -2}\}$. Since all $\g_{\leqslant k}$ are $Q$-stable,
the group $Q$ acts naturally on $(\U/\U\g_{\leqslant
-2,\chi})^{\ad\g_{\leqslant -1}}$ and it is clear that the action is by
algebra automorphisms. Also Premet checked in \cite{Premet2} that
there is an inclusion $\q\hookrightarrow \Walg$ compatible with the
action of $Q$ in the sense explained in the Introduction.

Finally, note that there is a natural homomorphism
$\Centr(\g)\hookrightarrow \U^{\ad\m}\rightarrow (\U/\U\m_\chi)^{\ad\m}$.
Premet checked in \cite{Premet2} that it is injective and identifies
$\Centr(\g)$ with the center of $\Walg$.

Now let us recall the definition of $\Walg$ given in \cite{Wquant}.
Define the Slodowy slice $S:=e+\z_\g(f)$. It will be convenient for
us to consider $S$ as a subvariety in $\g^*$ via the identification $\g\cong\g^*$.  Define the {\it
Kazhdan} action of $\K^\times$ on $\g^*$ by
$t.\alpha=t^{-2}\gamma(t)\alpha$ for $\alpha\in
\g^*(i):=\g(i)^*$. This action preserves $S$ and, moreover,
$\lim_{t\rightarrow \infty}t.s=\chi$ for all $s\in S$. Define the
{\it equivariant Slodowy slice} $X:=G\times S$. The variety $X$ is naturally embedded into
$T^*G=G\times\g^*$. Here we use the trivialization of $T^*G$ by left-invariant 1-forms
to identify $T^*G$ with $G\times\g^*$. Equip $T^*G$ with a $\K^\times$-action given
by $t.(g,\alpha)=(g\gamma(t)^{-1},t^{-2}\gamma(t)\alpha)$ and with
a $Q$-action by $q.(g,\alpha)=(gq^{-1}, q\alpha), q\in Q,g\in
G,\alpha\in\g^*$. The equivariant Slodowy slice is stable under both
actions. The action of $G\times Q$ on $T^*G$ (and hence on $X$) is
Hamiltonian with a moment map $\mu$ given by
$\langle\mu(g,\alpha), (\xi,\eta)\rangle=\langle
\Ad(g)\alpha,\xi\rangle+\langle\alpha,\eta\rangle,
\xi\in\g,\eta\in\q$.

According to \cite{Wquant}, Subsection 3.1,   the Fedosov star-product on $\K[X]$ is polynomial.
So we have a quantum algebra $\widetilde{\Walg}_\hbar:=\K[X][\hbar]$ (the equivariant W-algebra).

Taking the $G$-invariants in $\widetilde{\Walg}_\hbar$, we get a
homogeneous $Q$-equivariant star-product  $*$ on $\Walg_\hbar:=\K[S][\hbar]$ together
with a quantum comoment map $\q\rightarrow \K[S]$. This map is injective because
the action of $Q$ on $X$ is free. 
Note that the grading
on $\Walg_\hbar$ induces a filtration on $\Walg_\hbar/(\hbar-1)$.
Also note that the quantum comoment map $\g\rightarrow \widetilde{\Walg}_\hbar$
gives rise to a homomorphism
$\U_\hbar^G\rightarrow \Walg_\hbar$.  So we get a homomorphism
$\Centr(\g)\hookrightarrow \Walg_\hbar/(\hbar-1)$. It is not difficult to check
that this homomorphism is an isomorphism of $\Centr(\g)$ onto  the center of $\Walg$ but we
will not give a direct proof of this.

\begin{Thm}\label{Thm:4}
There is a $Q$-equivariant isomorphism $\Walg_\hbar/(\hbar-1)\cong
\Walg$ of filtered algebras intertwining the homomorphisms from
$\Centr(\g)$.
\end{Thm}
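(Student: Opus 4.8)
The strategy is to identify both $\Walg$ and $\Walg_\hbar/(\hbar-1)$ with one and the same quantum Hamiltonian reduction, carried out on $\U_\hbar$ and on $\U$ respectively, and then to check that this reduction is ``flat'' in $\hbar$.

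\emph{Step 1: the classical reduction.} Inside $T^*G=G\times\g^*$ the equivariant Slodowy slice $X=G\times S$ is the Hamiltonian reduction $\mu_M^{-1}(\chi)/M$, where $M=\exp(\m)$ acts by left translations on the $G$-factor and $\mu_M\colon T^*G\to\m^*$ is the corresponding moment map; concretely $\mu_M^{-1}(\chi)\cong M\times X$ as $M$-varieties. Hence $\K[X]=(\K[T^*G]/\K[T^*G]\widetilde\m_\chi)^{\ad\m}$ and the higher Chevalley--Eilenberg cohomology $H^{>0}(\m,\K[T^*G]/\K[T^*G]\widetilde\m_\chi)$ vanishes, where $\widetilde\m_\chi=\{\xi-\langle\chi,\xi\rangle:\xi\in\m\}\subset\g\subset\K[T^*G]$. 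Taking right $G$-invariants this becomes Premet's classical statement $(\K[\g^*]/\K[\g^*]\m_\chi)^{\ad\m}=\K[S]$ together with the vanishing of the higher cohomology; passing to $G\times Q$-equivariant objects recovers the Gan--Ginzburg variant.

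\emph{Step 2: lifting to the quantum level.} Equip $\U_\hbar=\K[\g^*][\hbar]$, together with the Fedosov quantum algebra $\widetilde\U_\hbar$ of $T^*G$ of Theorem \ref{Thm:3}, with the quantum comoment map of that theorem, restrict it to $\m$, and let $\m_{\chi,\hbar}$ be the $\K[\hbar]$-span of the elements $\widehat H_\xi-\hbar^2\langle\chi,\xi\rangle$, $\xi\in\m$. The claim is that the left ideal $\widetilde\U_\hbar\m_{\chi,\hbar}$ is $\hbar$-saturated, that $\widetilde\U_\hbar/\widetilde\U_\hbar\m_{\chi,\hbar}$ is $\K[\hbar]$-free with $\gr$ (with respect to $\hbar$) equal to $\K[T^*G]/\K[T^*G]\widetilde\m_\chi$, and consequently that the adjoint $\m$-action on this quotient has vanishing higher cohomology, with $H^0$ a $\K[\hbar]$-free algebra whose reduction mod $\hbar$ is $\K[X]$. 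The first point holds because the symbols of the generators $\widehat H_\xi-\hbar^2\langle\chi,\xi\rangle$ are the functions $\xi-\langle\chi,\xi\rangle$, which cut out the \emph{reduced} scheme $\mu_M^{-1}(\chi)$; the cohomology statement then follows by comparing the Chevalley--Eilenberg complex of $\widetilde\U_\hbar/\widetilde\U_\hbar\m_{\chi,\hbar}$ with its associated graded, which is the classical, exact complex of Step 1 (exactness lifts because all modules in sight are $\hbar$-torsion free and $\m$ is graded nilpotent). Uniqueness of the $G\times Q\times\K^\times$-equivariant Fedosov quantization identifies $H^0$ with $\widetilde\Walg_\hbar=\K[X][\hbar]$, compatibly with the gradings and with the quantum comoment map of $\q$; taking right $G$-invariants yields a graded, $Q$-equivariant algebra isomorphism $(\U_\hbar/\U_\hbar\m_{\chi,\hbar})^{\ad\m}\cong\Walg_\hbar$ intertwining the images of $\U_\hbar^G$.

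\emph{Step 3: specialization and bookkeeping.} Setting $\hbar=1$ collapses $\m_{\chi,\hbar}$ to $\m_\chi$ and $\U_\hbar$ to $\U$, so Step 2 gives a $Q$-equivariant algebra isomorphism $\Walg_\hbar/(\hbar-1)\cong(\U/\U\m_\chi)^{\ad\m}$, whose right-hand side is $\Walg$ by Premet's definition (equivalently, by the Gan--Ginzburg description recalled in Subsection \ref{SUBSECTION_Walg}, which also makes the $Q$-action manifest). The grading on $\Walg_\hbar$ is induced by the Kazhdan $\K^\times$-action on $S$, and this is precisely the $\K^\times$-action whose Rees construction produces the Kazhdan filtration on $\U$ and hence on $\Walg$; since the isomorphism of Step 2 is assembled from this graded/filtered data and both sides have associated graded $\K[S]$, it is an isomorphism of filtered algebras. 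Finally, on either side the homomorphism from $\Centr(\g)$ is obtained by pushing the diagonal-$\g$-invariant part of the single Fedosov quantum comoment map $\g\to\widetilde\U_\hbar$ through the same reduction, so the isomorphism automatically intertwines the two homomorphisms from $\Centr(\g)$.

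\emph{Main obstacle.} The crux is the flatness assertion in Step 2: that quantum Hamiltonian reduction by the unipotent group $M$ commutes with reduction mod $\hbar$, i.e. that neither $\hbar$-torsion nor higher cohomology appears, so that $\gr$ of the quantum reduction is exactly the classical reduction $\K[X]$. This is where one genuinely uses that $\m$ consists of nilpotent elements acting freely and that the Kazhdan grading is non-negative; once it is established, the remaining steps are formal manipulations and identifications of previously constructed data.
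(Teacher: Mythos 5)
Your strategy, to realize both $\Walg$ and $\Walg_\hbar/(\hbar-1)$ as a single quantum Hamiltonian reduction of a quantization of $T^*G$ by $M=\exp(\m)$, is morally the same route as the one the paper delegates to \cite{Wquant}: there the quantization of $T^*G$ at $\hbar=1$ is $\D(G)$, the reduction $(\D(G)/\D(G)\m_\chi)^{\ad\m}$ is identified with $\widetilde\Walg_\hbar/(\hbar-1)$ (Remark 3.1.3 of \cite{Wquant}), and taking $G$-invariants gives the isomorphism of filtered algebras (Corollary 3.3.3 of \cite{Wquant}). So the skeleton is fine, but two steps in your self-contained version are genuinely incomplete.

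The load-bearing gap is in Step 2. From the vanishing of higher $\m$-cohomology and $\hbar$-flatness you obtain \emph{some} graded, $G\times Q\times\K^\times$-equivariant deformation $H^0$ of $\K[X]$, but ``uniqueness of the Fedosov quantization'' does not by itself let you identify $H^0$ with $\widetilde\Walg_\hbar$. You would first have to show that the star product induced on $H^0$ is itself of Fedosov type (in particular satisfies the bidifferentiality condition (*3)) or invoke the classification of equivariant deformation quantizations of $X$ by $H^2_{DR}(X)[[\hbar]]$, noting that $H^2_{DR}(X)=H^2_{DR}(G)=\{0\}$ since $G$ is simply connected semisimple. Even granting this, you only get \emph{an} isomorphism; the further claim that it intertwines the images of $\U_\hbar^G$ (and hence the maps from $\Centr(\g)$) requires showing it intertwines the quantum comoment maps of $\g$. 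The paper handles precisely this point via the $\D(G,e)$-presentation: a $G$-equivariant isomorphism between the two algebras automatically intertwines the $\g$-comoment maps because any two comoment maps differ by a $\g^*$-valued character, which is forced to vanish by semisimplicity of $\g$ (compare the identical trick used in the proof of Theorem \ref{Thm_decomp}). Your Step 3 simply asserts the $\Centr(\g)$-compatibility happens ``automatically'', which is where the actual argument lives and where your write-up is silent.

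Smaller slips: $\m$ should act via velocity fields of the \emph{right} $G$-action on $G$ (left-invariant vector fields), not by left translations, so $M$-reduction is on the side of the $Q$-action; and the generators of the left ideal are $\widehat H_\xi-\langle\chi,\xi\rangle$ with no $\hbar^2$ in front of the constant, since $\langle\chi,\xi\rangle=0$ for $\xi\in\g(-1)$ and $\xi$ and $\langle\chi,\xi\rangle$ both sit in Kazhdan degree $0$ when $\xi\in\g(-2)$. These do not affect the structure of the argument but should be corrected.
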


Almost for sure, one can assume, in addition, that an isomorphism intertwines also
the embeddings of $\q$. However, a slightly weaker claim follows from the $Q$-equivariance:
namely, that the embeddings of $\q$ into $\Walg\cong \Walg_\hbar/(\hbar-1)$ differ by a character of $\q$.

A  version of this theorem, which did not take
the embeddings of $\Centr(\g)$ into account, was proved
in \cite{Wquant}, Corollary 3.3.3.  To see that this isomorphism
intertwines the maps from $\Centr(\g)$ one can argue as follows. According to \cite{Wquant}, Remark 3.1.3,
we have a $G$-equivariant isomorphism of $\widetilde{\Walg}_\hbar/(\hbar-1)$ and the algebra $\D(G,e):=(\D(G)/\D(G)\m_\chi)^{\ad \m}$, where $\m$ is embedded into $\D(G)$ via the velocity
vector field map for the right $G$-action. For both algebras we have quantum comoment maps $\g\rightarrow \widetilde{\Walg}_\hbar/(\hbar-1), \D(G,e)$.  This $G$-equivariant isomorphism
intertwines the quantum comoment maps because $G$ is semisimple. The isomorphism in Theorem \ref{Thm:4} is obtained
by restricting this $G$-equivariant isomorphism to the $G$-invariants. So the maps from $\Centr(\g)$ are indeed intertwined.

\subsection{Decomposition theorem}\label{SUBSECTION_decomp}
In a sense, the decomposition theorem is a basic result about  W-algebras. In a sentence, it says
that, up to completions, the universal enveloping algebra is decomposed into the tensor product
of the W-algebra and of a Weyl algebra. We start with an equivariant version of this theorem.

Apply Theorem \ref{Thm:3} to $X,T^*G$. We get a $G\times Q$-equivariant
homogeneous star-product $*$ on $X$ and $T^*G$ together with  quantum comoment
maps $\g\times \q\rightarrow \K[X][[\hbar]],\K[T^*G][[\hbar]]$.

Since the star-products on both $\K[X][[\hbar]]$ and $\K[T^*G][[\hbar]]$ are differential,
we can extend them to the completions $\K[X]^\wedge_{Gx}[[\hbar]], \K[T^*G]^\wedge_{Gx}[[\hbar]]$
along the $G$-orbit $Gx$, where $x=(1,\chi)$. These algebras come equipped with natural topologies
-- the topologies of completions.

Set $V:=[\g,f]$. Equip $V$ with the symplectic form
$\omega(\xi,\eta)=\langle\chi,[\xi,\eta]\rangle$, the action of
$\K^\times: t.v=\gamma(t)^{-1}v$ and the natural action of $Q$.

\begin{Thm}\label{Thm_decomp}
There is a $G\times Q\times \K^\times$-equivariant $\K[[\hbar]]$-linear isomorphism $$\Phi_\hbar:\K[T^*G]^\wedge_{Gx}[[\hbar]]
\rightarrow \K[X]^\wedge_{Gx}[[\hbar]]\widehat{\otimes}_{\K[[\hbar]]}\K[V^*]^\wedge_0[[\hbar]]$$ intertwining
the quantum comoment maps from $\g\times\q$.
\end{Thm}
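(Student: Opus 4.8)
The plan is to build the isomorphism $\Phi_\hbar$ by a formal deformation-quantization argument, starting from an isomorphism of the underlying Poisson manifolds and then quantizing it step by step in powers of $\hbar$, using the uniqueness properties of quantum comoment maps recorded in Subsection \ref{SUBSECTION_defquant}. First I would produce the classical (i.e.\ $\hbar=0$) statement: the symplectic variety $T^*G$ restricted to a formal neighborhood of the orbit $Gx$ decomposes, as a Hamiltonian $G\times Q$-space, into the product of the formal neighborhood of $Gx$ in $X$ and the formal neighborhood of $0$ in the symplectic vector space $V^*$. Concretely, $T^*G=G\times\g^*$ and the Slodowy-slice picture gives $\g^*\cong \z_\g(f)^*\oplus[\g,f]$ near $\chi$ (a linear, $Q\times\K^\times$-equivariant splitting coming from the $\sl_2$-triple), with the restriction of the symplectic form to the second factor being $\omega$ up to the completion; since $V=[\g,f]$ this identifies a formal slice transverse to the $G$-orbit direction. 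The $\K^\times$-equivariance is built in because the Kazhdan action contracts everything to $\chi$, which is exactly why the completions along $Gx$ are the natural objects and why a homogeneous (hence $\K^\times$-equivariant) star-product exists by Theorem \ref{Thm:3}.

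Next I would quantize this classical isomorphism. Both sides are $G\times Q\times\K^\times$-equivariant Fedosov-type star-product algebras on (completions of) symplectic varieties, equipped with quantum comoment maps for $\g\times\q$. The strategy is the standard inductive one: suppose $\Phi_\hbar$ has been constructed modulo $\hbar^{N}$ as a $G\times Q\times\K^\times$-equivariant algebra isomorphism intertwining the quantum comoment maps; the obstruction to extending it one more step lives in a Hochschild-type cohomology group, which for these (formally) symplectic completions is controlled by de Rham cohomology of the formal neighborhood of $Gx$ — and that is trivial in the relevant degrees because the neighborhood retracts onto $G/Q$-type data and the $\K^\times$-contraction kills the positive-degree part. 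The reductivity of $G\times Q\times\K^\times$ lets me average to keep everything equivariant at each stage, and the intertwining-with-comoment-maps condition is handled using the fact that two quantum comoment maps for the same Hamiltonian action differ by an element of $(\g^*\times\q^*)^{G\times Q}[[\hbar]]$; since $\g$ is semisimple and $Q$ acts freely on $X$ the relevant invariants vanish (or can be absorbed), so the correction terms can always be chosen to intertwine the maps exactly. Assembling the inductive steps, which converge $\hbar$-adically, yields $\Phi_\hbar$ on the formal completions.

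Alternatively — and this is probably the cleanest route given what is already available — I would simply cite the construction in \cite{Wquant}, Subsection 3.3: the decomposition isomorphism was built there, and the only thing to verify here is the "stronger version", namely that one can arrange $\Phi_\hbar$ to be simultaneously $G\times Q\times\K^\times$-equivariant \emph{and} to intertwine the quantum comoment maps from $\g\times\q$ on the nose. So the real content of the proof is the bookkeeping: run the construction of \cite{Wquant} equivariantly (each auxiliary choice — the Fedosov connection, the identification of the formal slice, the successive corrections — can be made $G\times Q\times\K^\times$-invariant by averaging, since all three groups are reductive), and at the end adjust by an element of $(\g^*\oplus\q^*)^{G\times Q}[[\hbar]]$ to fix up the comoment maps. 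I expect the main obstacle to be precisely this last point: showing that the discrepancy between $\Phi_\hbar$-pushforward of the comoment map and the target comoment map is an \emph{invariant}, $\hbar$-dependent shift and nothing worse, and that this shift can be killed. This uses that the Poisson centers of the algebras of functions on the relevant symplectic varieties are scalars (over $\K[[\hbar]]$), so the ambiguity in quantum comoment maps is exactly by $(\g^*\oplus\q^*)^{G\times Q}[[\hbar]]$; and since $\g=[\g,\g]$ is semisimple the $\g$-part of this space is zero, while the $\q$-part is handled by the freeness of the $Q$-action on $X$ used in Subsection \ref{SUBSECTION_Walg}. The $\K^\times$-equivariance throughout is what forces the corrections to be of the right homogeneity degree and hence to live in finite-dimensional (indeed, often trivial) spaces, making the adjustments possible.
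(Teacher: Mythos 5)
Your overall plan --- invoke the decomposition isomorphism from \cite{Wquant}, Subsection 3.3, and then check that it intertwines the (classical and quantum) comoment maps --- matches the paper's approach, and you correctly identify the crux: showing that the discrepancy $\Phi_\hbar(H_\xi)-H_\xi$ vanishes. However, your mechanism for killing this discrepancy has a genuine gap.

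The discrepancy is central (both $\Phi_\hbar(H_\xi)$ and $H_\xi$ satisfy the relation $[\cdot,\bullet]_{*'}=\hbar^2\xi_*$, and the Poisson center of the completed symplectic algebra is $\K[[\hbar]]$) and $\K^\times$-homogeneous of degree $2$; since $\hbar$ has degree $1$, it must be of the form $c_\xi\hbar^2$ for a linear functional $c$ on $\g\times\q$ that is $G\times Q$-invariant. For $\xi\in\g$ your argument kills $c_\xi$ via $(\g^*)^G=0$, which is fine. But for $\xi\in\q$ you invoke ``freeness of the $Q$-action on $X$,'' and this does not give $(\q^*)^Q=0$: the reductive group $Q=Z_G(e,h,f)$ typically has a positive-dimensional center, so $(\q^*)^Q\neq 0$ in general. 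Freeness of the $Q$-action is used in Subsection \ref{SUBSECTION_Walg} only to prove the comoment map $\q\hookrightarrow\Walg_\hbar$ is \emph{injective}, not to rule out an invariant shift. As written, your argument therefore leaves the scalar $c_\xi\hbar^2$, for $\xi\in\q$, unaccounted for; and the preliminary cohomological/inductive sketch you offer does not address it either, since $H^2_{DR}(G)$ need not vanish and the obstruction analysis is not carried out.

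The paper's proof kills this scalar by a direct analysis of the first-order term $T_1$ in $\Phi_\hbar=\operatorname{id}+\sum_{i\geq 1}T_i\hbar^{2i}$, where each $T_i$ is a $G\times Q$-invariant differential operator of $\K^\times$-degree $-2i$ (this explicit form is supplied by Theorem 3.3.1 of \cite{Wquant}). Comparing $\hbar^2$-coefficients of $\Phi_\hbar(f)*'\Phi_\hbar(g)=\Phi_\hbar(f*g)$ shows $T_1$ is a derivation; comparing the skew-symmetric parts of the $\hbar^4$-coefficients and using the symmetry $D_2(f,g)=D_2(g,f)$ of a Fedosov star-product (Remark \ref{Rem:3}) shows $T_1$ annihilates the Poisson bracket. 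The attached vector field is therefore symplectic, and since $H^1_{DR}((T^*G)^\wedge_{Gx})=H^1_{DR}(G)=\{0\}$ it is Hamiltonian, $T_1=\{f,\cdot\}$ with $f$ a $G\times Q$-invariant function. Hence $T_1(H_\xi)=\{f,H_\xi\}=0$ for all $\xi\in\g\times\q$, so $\Phi_\hbar(H_\xi)-H_\xi\in\hbar^4 A$, which together with the degree-$2$ constraint forces it to vanish. This $T_1$-analysis --- including the use of $H^1_{DR}=0$ and of Remark \ref{Rem:3} --- is the genuinely nontrivial step that your proposal skips.
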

Here $\widehat{\otimes}$ stands for the completed tensor product
(i.e., we consider the tensor product of the topological algebras $\K[X]^\wedge_{Gx}[[\hbar]],\K[V^*]^\wedge_0[[\hbar]]$ and then complete this tensor product
with respect to the induced topology).
\begin{proof}
Recall from the proof of Theorem 3.3.1 in \cite{Wquant} that there is a $G\times Q\times \K^\times$-equivariant isomorphism $\varphi:\K[T^*G]^\wedge_{Gx}\rightarrow \K[X]^\wedge_{Gx}\widehat{\otimes}\K[V^*]^\wedge_0$
of Poisson algebras. We remark that the centers of the algebras in interest consist of scalars.  The automorphism
 $\varphi$ is $G\times Q$-equivariant so it intertwines the classical comoment
maps perhaps  up to  a character of $\g\times \q$, see the explanation on the difference between
two classical comoment maps in Subsection \ref{SUBSECTION_defquant}. Let us remark that
the functions $H_\xi$ have degree 2 with respect to the $\K^\times$-action. Since $\varphi$ is  $\K^\times$-equivariant,
 and the centers of our algebras consist of scalars, we see that
the character has to vanish. Identify $A:=\K[T^*G]^\wedge_{Gx}$ and $\K[X]^\wedge_{Gx}\widehat{\otimes}\K[V^*]^\wedge_0$
by means of $\varphi$. Let $*,*'$ denote
the two star-products on $A[[\hbar]]$. Then there is  $\Phi_\hbar=\operatorname{id}+\sum_{i=1}^\infty T_i\hbar^{2i}$, where $T_i$ is a $G\times Q$-invariant differential operator of degree $-2i$ with respect to $\K^\times$, with $\Phi_\hbar(f)*'\Phi_\hbar(g)=\Phi_\hbar(f*g)$, see Theorem 3.3.1 in \cite{Wquant}. We need to check that $\Phi_\hbar(H_\xi)=H_\xi$.

We claim that $T_1$ is a derivation of $A$.  To prove this consider the equality $\Phi_\hbar(f)*'\Phi_{\hbar}(g)=\Phi_\hbar(f*g), f,g\in A$, modulo $\hbar^4$. We have
\begin{align*}
\Phi_\hbar(f)*'\Phi_{\hbar}(g)&\equiv (f+T_1(f)\hbar^2)*'(g+T_1(g)\hbar^2)\equiv\\
&\equiv fg+ (fT_1(g)+T_1(f)g)\hbar^2+
+\frac{1}{2}\{f,g\}\hbar^2\mod \hbar^4,\\ \Phi_\hbar(f*g)&\equiv f*g+ T_1(f*g)\equiv fg+ \frac{1}{2}\{f,g\}\hbar^2+ T_1(fg)\hbar^2\mod \hbar^4.
\end{align*}
These two congruences imply the claim. Now consider the skew-symmetric
parts of the coefficients of $\hbar^4$. Using the fact that $D_2$ is symmetric, see  Remark \ref{Rem:3},
we see that this part in $\Phi(f)*'\Phi(g)$ equals $\{T_1(f),g\}\hbar^4+ \{f,T_1(g)\}\hbar^4$, while
for $\Phi_\hbar(f*g)$ we have $T_1(\{f,g\})\hbar^4$.
Comparing the two, we deduce that $T_1$ annihilates the Poisson bracket.

Let $\eta$ be the vector field on $(T^*G)^\wedge_{Gx}$ corresponding to $T_1$. Since $H^1_{DR}(T^*G^\wedge_{Gx})=H^1_{DR}(G)=\{0\}$, we see that $\eta$ is a Hamiltonian vector field.
So $T_1=\{f,\cdot\}$ for some $G\times Q$-equivariant function $f$. So $T_1(H_\xi)=\{f,H_\xi\}=0$ for any $\xi\in\g\times\q$. In other words, $\Phi_\hbar(H_\xi)-H_\xi\in \hbar^4 A$. On the other hand,
$\Phi_\hbar(H_\xi)-H_{\xi}$ is a central element (hence lies in $\K[[\hbar]]$) and has degree 2 with respect to the $\K^\times$-action. So this element is zero.
\end{proof}

\begin{Rem}
In the proof of the theorem we used the semisimplicity of $G$. However, Theorem \ref{Thm_decomp} holds  for a general
reductive group too (we will need this situation in a subsequent paper).
To show this suppose, at first, that  $G=Z\times G_0$, where $Z:=Z(G)^\circ, G_0:=(G,G)$. Then $T^*G=T^*Z\times T^*G_0$ and $X=T^*Z\times X_0$, where $X_0$ is the equivariant Slodowy slice for $G_0$. Also we have the decomposition
$Q=Z\times Q_0, Q_0:=Q\cap G_0$. Choose a $Z\times  \K^\times$-invariant symplectic connection
on $T^*Z$, and $G_0\times Q_0\times \K^\times$-invariant symplectic connections on $T^*(G,G)$ and $X_0$.
Equip $T^*G$ and $X$ with the products of the corresponding connections. Then  for $\Phi_\hbar$ we can take
the product of the identity isomorphism $\K[T^*Z]^\wedge_{Z}[[\hbar]]\rightarrow \K[T^*Z]^\wedge_Z[[\hbar]]$
and an isomorphism
$$\K[T^*G_0]^\wedge_{G_0x}[[\hbar]]
\rightarrow \K[X_0]^\wedge_{G_0x}[[\hbar]]\widehat{\otimes}_{\K[[\hbar]]}\K[V^*]^\wedge_0[[\hbar]],$$
satisfying the conditions of the theorem.

In general, to get the decomposition $G=Z\times G_0$ one needs to replace $G$ with a covering. So let
$G=(Z\times G_0)/\Gamma$, where $\Gamma$ is a finite central subgroup in $Z\times G_0$. An isomorphism $\Phi_\hbar$ from the previous paragraph is $\Gamma$-equivariant. Its restriction
to $\Gamma$-invariants has the required properties.
\end{Rem}

 Consider the quantum algebras $\U_{\hbar}:=\K[\g^*][\hbar]=\K[T^*G][\hbar]^G,
\U_\hbar^\wedge:=\K[\g^*]^\wedge_\chi[[\hbar]],
\W_\hbar^\wedge:=\K[V^*]^\wedge_0[[\hbar]],
\Walg_\hbar:=\K[S][\hbar],
\Walg_\hbar^\wedge:=\K[S]^\wedge_\chi[[\hbar]]$ and finally
$\W_\hbar^\wedge(\Walg^\wedge_\hbar):=\W_\hbar^\wedge\widehat{\otimes}_{\K[[\hbar]]}\Walg^\wedge_\hbar$.

Restricting $\Phi_\hbar$  from Theorem \ref{Thm_decomp} to the $G$-invariants, we get a
$Q$-and $\K^\times$-equivariant isomorphism $\Phi_\hbar:
\U^\wedge_\hbar\rightarrow \W_\hbar^\wedge(\Walg^\wedge_\hbar)$ of
topological $\K[[\hbar]]$-algebras. Till the end of the paper we fix this
isomorphism.

\subsection{Completions of quantum algebras}\label{SUBSECTION_bimod}
Let $A$ be a finitely generated Poisson algebra, and $\A_\hbar=
A[\hbar]$ be a quantum algebra with a star-product
$f*g=\sum_{i=0}^\infty D_i(f,g)\hbar^{2i}$ satisfying the condition
(*3). Suppose that $A$ is graded, $A=\bigoplus_{i\in \Z}A_i$ with $\{A_i,A_j\}\subset
A_{i+j-2}$. Further, suppose that $*$ is homogeneous.

Choose a $\K^\times$-invariant point $\chi\in \Spec(A)$. We have the natural
structure of a quantum algebra on
$\A^\wedge_\hbar:=A^\wedge_\chi[[\hbar]]$.
We will be particularly interested in $\A_\hbar=\U_\hbar,\Walg_\hbar$
(equipped with the Kazhdan $\K^\times$-actions) with $\chi=(e,\cdot)$.

Let $I^\wedge_{\chi,\hbar}$ denote the inverse image of the maximal
ideal $\m_\chi\subset A^\wedge_\chi$ of $\chi$ in $\A^\wedge_\hbar$
and $I_{\chi,\hbar}:=I^\wedge_{\chi,\hbar}\cap \A_\hbar$. Then
$I^\wedge_{\chi,\hbar}, I_{\chi,\hbar}$ are two-sided ideals  of the
corresponding quantum algebras and their powers $(I^\wedge_{\chi,\hbar})^m, I_{\chi,\hbar}^m$ with respect to
the star-products coincide with  the powers with respect to the commutative products.
The last claim follows easily from  (*3). Now it is very easy to see
that $\A^\wedge_\hbar$ is naturally isomorphic to the completion
$\varprojlim_k \A_\hbar/I_{\chi,\hbar}^k$. If a group $Q$ acts on $A$
preserving $\chi$ and $*$, then we have a natural action of $Q$ on
$\A^\wedge_\hbar$.

Let $\M_\hbar$ be a  finitely generated
$\A_\hbar$-module. To $\M_\hbar$ one can assign its completion
$\M^\wedge_\hbar:=\varprojlim \M_\hbar/ I_{\chi,\hbar}^k\M_\hbar$,
which has a natural structure of an
$\A^\wedge_\hbar$-module. If $\M_\hbar$ is $\K^\times$-equivariant, then so is
$\M^\wedge_\hbar$.

\begin{Prop}\label{Lem:3.0.1}
\begin{enumerate}
\item
$\M^\wedge_\hbar =\A^\wedge_\hbar\otimes_{\A_\hbar}\M_\hbar$ and the
functor $\M_\hbar\mapsto \M^\wedge_\hbar$ is exact.
\item $\M^\wedge_\hbar=0$ if and only if $\chi\not\in\VA(\M_\hbar)$.
\item If $\M_\hbar$ is $\K[\hbar]$-flat, then $\M^\wedge_\hbar$ is
$\K[[\hbar]]$-flat.
\item $\M_\hbar^\wedge/\hbar \M_\hbar^\wedge$ coincides with the completion
$(\M_\hbar/\hbar \M_\hbar)^\wedge_\chi$ of the $\A_\hbar/(\hbar)$-module $\M_\hbar/\hbar \M_\hbar$
at $\chi$.
\end{enumerate}
\end{Prop}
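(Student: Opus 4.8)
The plan is to treat the four assertions in the order given, since each is essentially a statement about the completion functor being well-behaved, and the later parts lean on the earlier ones. For part (1), I would first observe that $\A_\hbar$ is Noetherian: indeed $\gr \A_\hbar$ (with respect to the $I_{\chi,\hbar}$-adic filtration, or simply the observation that $\A_\hbar = A[\hbar]$ with $A$ finitely generated) is Noetherian, so $\A_\hbar$ is. Then the key point is that the powers of $I_{\chi,\hbar}$ with respect to the star-product agree with the commutative powers (this is asserted just above the Proposition and follows from (*3)), so $\A_\hbar^\wedge$ is genuinely the $I_{\chi,\hbar}$-adic completion of a Noetherian ring. For a finitely generated module over a Noetherian ring, completion along an ideal is flat and agrees with $\A_\hbar^\wedge \otimes_{\A_\hbar} \M_\hbar$ — this is the standard Artin–Rees / Atiyah–Macdonald package. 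The only subtlety is that $\A_\hbar^\wedge$ is not literally Noetherian in the naive sense because of the $\K[[\hbar]]$-coefficients, but one can either check the relevant finiteness by hand (the completion of a finitely presented module is computed termwise) or reduce to the associated graded. Exactness of $\M_\hbar \mapsto \M_\hbar^\wedge$ is then flatness of $\A_\hbar^\wedge$ over $\A_\hbar$.

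For part (2), the statement $\M_\hbar^\wedge = 0 \iff \chi \notin \VA(\M_\hbar)$ reduces, via the flatness from part (1) and a Nakayama-type argument, to the corresponding classical statement for $\M_\hbar/\hbar\M_\hbar$ over $A = \A_\hbar/(\hbar)$: the completion of a finitely generated module over a Noetherian commutative ring at a point $\chi$ vanishes iff $\chi$ is not in the support. More directly: $\M_\hbar^\wedge = \varprojlim \M_\hbar / I_{\chi,\hbar}^k \M_\hbar$ vanishes iff all these quotients vanish (the inverse limit of a tower that does not stabilize at $0$ is nonzero because, by Artin–Rees, the tower is eventually a "shift" tower), iff $I_{\chi,\hbar} \M_\hbar = \M_\hbar$, iff $\chi$ is not in the support. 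For part (3), $\K[[\hbar]]$-flatness of $\M_\hbar^\wedge$ means $\hbar$ is a non-zerodivisor on $\M_\hbar^\wedge$; since $\M_\hbar$ is $\K[\hbar]$-flat, $\hbar$ is a non-zerodivisor on $\M_\hbar$, and I would deduce the same for the completion using that completion (being exact by part (1)) sends the injection $\M_\hbar \xrightarrow{\hbar} \M_\hbar$ to an injection $\M_\hbar^\wedge \xrightarrow{\hbar} \M_\hbar^\wedge$; flatness over the PID-like ring $\K[[\hbar]]$ is then torsion-freeness, which follows since $\K[[\hbar]]$-torsion would be $\hbar$-torsion.

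For part (4), the cleanest route is again exactness of completion from part (1) applied to the short exact sequence $0 \to \M_\hbar \xrightarrow{\hbar} \M_\hbar \to \M_\hbar/\hbar\M_\hbar \to 0$: this gives $\M_\hbar^\wedge / \hbar \M_\hbar^\wedge \cong (\M_\hbar/\hbar\M_\hbar)^\wedge$, where the completion on the right is the $I_{\chi,\hbar}$-adic completion of $\M_\hbar/\hbar\M_\hbar$ — but since $\hbar$ acts as $0$, the ideal $I_{\chi,\hbar}$ acts through its image $\m_\chi$ in $A$, so this is exactly the $\m_\chi$-adic completion $(\M_\hbar/\hbar\M_\hbar)^\wedge_\chi$ as an $A = \A_\hbar/(\hbar)$-module. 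I expect the main obstacle to be the bookkeeping around non-Noetherianness of the $\hbar$-complete algebras: one must be slightly careful that "completion commutes with $\otimes$" and "completion is exact" hold in this graded/filtered $\K[[\hbar]]$-setting rather than quoting them verbatim from the commutative Noetherian case. The safest fix is to do everything with the $\hbar$-adic and $I_{\chi,\hbar}$-adic filtrations simultaneously, reduce each claim modulo $\hbar$ to a statement about the honestly Noetherian commutative ring $A$, and lift back using $\hbar$-completeness (formal smoothness of the $\hbar$-adic topology); this reduction is where part (4) is actually doing work for the other parts, so it is worth proving (4)-type statements early and recycling them.
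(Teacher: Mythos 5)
Your overall strategy is the right one — reduce everything to exactness and flatness of the completion functor, then deduce (3) and (4) by applying exactness to $0 \to \M_\hbar \xrightarrow{\hbar} \M_\hbar \to \M_\hbar/\hbar\M_\hbar \to 0$, and derive (2) from the rest — and this is essentially how the paper argues parts (2)–(4). However, there is a genuine gap in your treatment of part (1), and it is precisely where the real work of the proposition lies.

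You invoke the ``standard Artin--Rees / Atiyah--Macdonald package'' to get that $I_{\chi,\hbar}$-adic completion of a finitely generated $\A_\hbar$-module is given by $\A_\hbar^\wedge \otimes_{\A_\hbar} \M_\hbar$ and is exact. But $\A_\hbar$ is a \emph{noncommutative} quantum algebra, and the commutative Noetherian Artin--Rees lemma does not apply to it off the shelf. What one needs is a noncommutative Artin--Rees property for $I_{\chi,\hbar}$, and the standard way to obtain this is to show that the blow-up (Rees) algebra $\Bl_\J(\A) = \bigoplus_k \J^k$ is Noetherian. This is exactly what Lemma \ref{Lem:1.23} does, and it is the engine of the whole proposition: it uses $\hbar$-adic completeness of the ambient algebra and the commutativity-up-to-$\hbar$ condition $[\J,\J]\subset\hbar\J$ to reduce the Noetherianness of $\hat{\Bl}_\J(\A)/(\hbar,\hbar')$ to a genuinely commutative computation, then lifts. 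Your ``reduce to the associated graded'' remark gestures at this, but you never say \emph{what} has to be Noetherian or why commutativity mod $\hbar$ is the relevant hypothesis. Moreover, you misidentify the obstruction: you say the subtlety is that $\A_\hbar^\wedge$ is not Noetherian ``because of the $\K[[\hbar]]$-coefficients,'' but $\A_\hbar^\wedge$ in fact \emph{is} Noetherian (this is Lemma \ref{Lem:1.231}(1)); the actual issue is that $\A_\hbar$ is noncommutative, so neither exactness of completion nor $\M^\wedge_\hbar \cong \A^\wedge_\hbar\otimes_{\A_\hbar}\M_\hbar$ is free. Relatedly, the paper's two-step route — first $\hbar$-adic completion (whose blow-up algebra is visibly the Noetherian polynomial ring $\A_\hbar[\hbar]$), then $I'_{\chi,\hbar}$-adic completion of the $\hbar$-complete algebra — is not a stylistic choice: the $\hbar$-completeness hypothesis is what makes Lemma \ref{Lem:1.23} applicable. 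Once (1) is in place, your arguments for (2), (3), (4) are sound and match the paper's (the paper actually derives (2) from (4) rather than running your direct Mittag--Leffler/Nakayama argument, but the latter works too, modulo spelling out that the surjective inverse system has nonzero limit iff some term is nonzero and that $I_{\chi,\hbar}\M_\hbar=\M_\hbar$ is equivalent to $\chi\notin\VA(\M_\hbar)$ by Nakayama applied to $\M_\hbar/\hbar\M_\hbar$ localized at $\chi$).
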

The proof of this proposition involves
a standard machinery of blow-up algebras, compare with \cite{Eisenbud}, Chapter 7.

For an associative algebra $\A$ and a two-sided ideal $\J\subset \A$ one can form the blow-up
algebra $\Bl_\J(\A)=\bigoplus_{i=0}^\infty \J^i$. This algebra is positively graded.  To ensure nice properties of the completion $\varprojlim_i \A/\J^i$ we need to make sure that the blow-up algebra
$\Bl_\J(\A)$ is Noetherian.

\begin{Lem}\label{Lem:1.23}
Let $\A$ be a $\K[\hbar]$-algebra and $\J$ be a two-sided ideal of $\A$ containing $\hbar$.
Suppose that $\A$ is complete and separated in the $\hbar$-adic topology. Further, suppose that
the algebra $\A/(\hbar)$ is commutative  and  Noetherian. Finally, suppose that
$[\J,\J]\subset \hbar \J$. Then the algebra $\Bl_\J(\A)$ is Noetherian.
\end{Lem}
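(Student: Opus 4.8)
The plan is to reduce the Noetherianity of $\Bl_\J(\A)$ to the Noetherianity of an associated graded commutative algebra, and then to show that this graded algebra is a quotient of a finitely generated (hence Noetherian) $\A/(\hbar)$-algebra. Observe first that since $\hbar \in \J$, we have a filtration on each $\J^i$ by the subspaces $\hbar^j \J^i$, and the associativity of multiplication together with the hypothesis $[\J,\J]\subset \hbar\J$ means that the commutator of elements of $\J^i$ and $\J^k$ lands in $\hbar\J^{i+k-1}$ — roughly speaking, $\J$ "becomes commutative" to leading order. More precisely, I would first pass to $\gr \A$ with respect to the $\hbar$-adic filtration: since $\A$ is $\hbar$-adically complete and separated and $\A/(\hbar)$ is commutative Noetherian, $\gr_\hbar \A = \bigoplus_j \hbar^j \A/\hbar^{j+1}\A \cong (\A/(\hbar))[\hbar]$ is a polynomial ring over a Noetherian ring, hence Noetherian. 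The key point is that $\J/\hbar\A$ is then an ideal $\bar{\J}$ in the Noetherian ring $\A/(\hbar)$, so it is finitely generated; choose generators $\bar f_1,\dots,\bar f_r$ and lift them to $f_1,\dots,f_r \in \J$. Together with $\hbar$, these elements generate $\J$ as a two-sided ideal, indeed (using $\hbar$-adic completeness and a successive-approximation argument) $\J = \hbar\A + \sum_i \A f_i \A$, and one checks $\J^n$ is generated by products of $n$ of these.

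**The main commutative comparison.** Now form the $\hbar$-adic filtration on the whole blow-up algebra: filter $\Bl_\J(\A) = \bigoplus_{i\geq 0}\J^i$ by $F^m = \bigoplus_i \hbar^{\max(m-?,0)}\cdots$ — more cleanly, just take on each graded piece $\J^i$ its $\hbar$-adic filtration and form $\gr_\hbar \Bl_\J(\A) = \bigoplus_{i\geq 0}\gr_\hbar(\J^i)$. The hypothesis $[\J,\J]\subset \hbar\J$ guarantees that this associated graded algebra is \emph{commutative}: for $a\in \J^i$, $b\in \J^k$, we have $ab - ba \in \hbar \J^{i+k-1} \subset \hbar \J^{i+k}$ only after noting that $\hbar\in\J$ so $\hbar\J^{i+k-1}\subset\J^{i+k}$ — so the commutator drops $\hbar$-adic order by at least one relative to the product, hence vanishes in $\gr_\hbar$. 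Thus $\gr_\hbar \Bl_\J(\A)$ is a commutative algebra, generated over $\gr_\hbar \A = (\A/(\hbar))[\hbar]$ by the images of $f_1,\dots,f_r$ placed in degree $1$ of the blow-up grading. A commutative algebra finitely generated over a Noetherian ring is Noetherian, so $\gr_\hbar \Bl_\J(\A)$ is Noetherian.

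**Lifting Noetherianity through the filtration.** Finally I would invoke the standard lemma that if a filtered ring has Noetherian associated graded ring, and the filtration is exhaustive and separated (or complete), then the ring itself is Noetherian — this is the classical argument that left ideals have finitely generated associated graded modules, whose generators lift to generators of the ideal. Here one must be slightly careful because the $\hbar$-adic filtration on $\Bl_\J(\A)$ need not be separated on each $\J^i$ a priori; however, since $\A$ is $\hbar$-adically separated and $\J^i \subset \A$, the induced filtration on $\J^i$ is separated, and completeness can be arranged or the Artin–Rees-type argument run directly. I would spell this out as: given a left ideal $\mathfrak a \subset \Bl_\J(\A)$, the space $\gr_\hbar \mathfrak a$ is a finitely generated $\gr_\hbar\Bl_\J(\A)$-module; lift its generators and check by induction on $\hbar$-adic order plus separatedness that they generate $\mathfrak a$.

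**Main obstacle.** The delicate step is making the "$\gr_\hbar$ is commutative hence Noetherian hence lift back" chain fully rigorous: specifically, verifying that the blow-up grading and the $\hbar$-adic filtration interact well enough (the relevant filtration on $\Bl_\J(\A)$ being simultaneously compatible with both gradings), that the associated graded really is generated in the claimed finite way over $(\A/(\hbar))[\hbar]$, and that separatedness/completeness is enough to run the lifting argument for left ideals. The commutator hypothesis $[\J,\J]\subset\hbar\J$ is doing exactly the work needed to kill noncommutativity in the associated graded, so once that bookkeeping is set up correctly the rest is the standard filtered-ring machinery; I expect the bulk of the genuine effort to be in choosing the right bifiltration and confirming $\gr$ is the expected finitely generated commutative algebra.
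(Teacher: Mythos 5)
Your strategy---pass to an associated graded that is commutative, conclude Noetherianity there, then lift---is the right general shape, and it is broadly the shape of the paper's proof too. But the specific associated graded you form is not commutative, and this breaks the argument at its central step.

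You filter each blow-up piece $\J^i$ by its own $\hbar$-adic filtration and form $\gr_\hbar\Bl_\J(\A)=\bigoplus_i\gr_\hbar(\J^i)$. Commutativity of this ring would require, for $a\in\J^i$ and $b\in\J^k$ of $\hbar$-adic order $0$, that $[a,b]\in\hbar\J^{i+k}$. But the hypothesis $[\J,\J]\subset\hbar\J$ only propagates to $[\J^i,\J^k]\subset\hbar\J^{i+k-1}$, and $\hbar\J^{i+k-1}$ is in general \emph{strictly larger} than $\hbar\J^{i+k}$ (we have $\J^{i+k}\subset\J^{i+k-1}$, not the reverse; your step ``$\hbar\J^{i+k-1}\subset\J^{i+k}$ hence $\subset\hbar\J^{i+k}$'' confuses membership in the degree-$(i+k)$ piece with membership in $\hbar$ times that piece). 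Concretely, already for $i=k=1$: the commutator lands in $\hbar\J$, but to vanish in $\gr_\hbar(\J^2)$ it would need to land in $\hbar\J^2$, and one can exhibit examples (e.g.\ $\A$ a homogenized completed enveloping algebra, $\J$ the ideal of a point $\chi$ with $\chi|_{[\g,\g]}\neq 0$) where $[\J,\J]\not\subset\hbar\J^2$. So your $\gr_\hbar\Bl_\J(\A)$ is genuinely noncommutative, and the Noetherianity conclusion does not follow from your argument.

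The paper's fix is to quotient not merely by the copy of $\hbar$ in blow-up degree $0$, but by the two-element ideal $(\hbar,\hbar')$, where $\hbar'$ denotes $\hbar$ regarded as an element of $\J=\J^1$ sitting in blow-up degree $1$. Then the degree-$(i+k)$ piece of $\Bl_\J(\A)/(\hbar,\hbar')$ is $\J^{i+k}/\hbar\J^{i+k-1}$, and now $[\J^i,\J^k]\subset\hbar\J^{i+k-1}$ does exactly what is needed: it kills the commutator. This quotient is then a commutative ring generated over the Noetherian ring $\A/(\hbar)$ by the finitely many generators of $\J/\hbar\A$, hence Noetherian. There is also a second, smaller gap in your write-up: the filtered-to-graded lifting argument (in the Zariskian/decreasing-filtration form) needs the ring to be complete in the relevant topology, and $\Bl_\J(\A)=\bigoplus_i\J^i$ is not complete. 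The paper handles this by first proving the \emph{completed} blow-up $\hat{\Bl}_\J(\A)=\prod_i\J^i$ is Noetherian (where the $(\hbar,\hbar')$-adic topology is complete), and then descending to $\Bl_\J(\A)$ by a separate trick: it realizes $\Bl_\J(\A)$ as $\hat{\Bl}_\J(\A)[\hbar]^{\K^\times}$ for a suitable $\K^\times$-action and transports Noetherianity through that identification. Your phrase ``completeness can be arranged or the Artin--Rees-type argument run directly'' is precisely where this extra mechanism is needed; it cannot be waved away.
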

\begin{proof}
Consider the {\it completed blow-up algebra} $\hat{Bl}_\J(\A):=\prod_{i=0}^\infty
\J^i$. Let $\hbar,\hbar'$ denote the images of $\hbar\in \A$ under the embeddings $\A,\J\hookrightarrow \Bl_\J(\A)$. Since $\A$ is complete in the $\hbar$-adic topology,
we see that  $\hat{\Bl}_\J(\A)$ is complete in the $(\hbar,\hbar')$-adic topology.

Consider the algebra $\hat{\Bl}_\J(\A)/(\hbar,\hbar')=\prod_{i=0}^\infty \J^i/\hbar \J^{i-1}$ (here we assume
that $\J^{-1}=\J^0=\A$). It has a decreasing filtration $\F_i\hat{\Bl}_{\J}(\A)/(\hbar,\hbar'):=\prod_{j\geqslant i}\J^j/\hbar \J^{j-1}$. The associated graded algebra is nothing else but $\bigoplus_{i=0}\J^i/\hbar \J^{i-1}=\Bl_\J(\A)/(\hbar,\hbar')$.
Let us show that the last algebra is commutative and Noetherian.

Commutativity of $\Bl_\J(\A)/(\hbar,\hbar')$ means $[\J^i,\J^j]\subset \hbar \J^{i+j-1}$. This follows
easily from $[\J,\J]\subset \hbar \J$. The algebra $\Bl_\J(\A)/(\hbar,\hbar')$ is commutative and generated
by $\J/(\hbar)$ over the Noetherian algebra $\A_\hbar/(\hbar)$. It follows that $\Bl_\J(\A)/(\hbar,\hbar')$ is Noetherian.

Since $\Bl_\J(\A)/(\hbar,\hbar')$ is the associated graded of $\hat{\Bl}_\J(\A)/(\hbar,\hbar')$
with respect to the  complete separated filtration, we see that $\hat{\Bl}_\J(\A)/(\hbar,\hbar')$ is Noetherian.
Since $\hat{\Bl}_\J(\A)$ is complete in the $(\hbar,\hbar')$-adic topology, and the quotient
$\hat{\Bl}_\J(\A)/(\hbar,\hbar')$ is Noetherian, we see that $\hat{\Bl}_\J(\A)$ itself is Noetherian.

Let us show that the Noetherian property for $\hat{\Bl}_\J(\A)$ implies that for $\Bl_\J(\A)$.
More generally, let $B:=\bigoplus_{i\geqslant 0}B_i$ be a $\Z_{\geqslant 0}$-graded algebra
and let $\hat{B}$ be the completion with respect to this grading (e.g., $B=\Bl_\J(\A),
\hat{B}=\hat{\Bl}_\J(\A)$). Suppose that $\hat{B}$ is Noetherian. We want to prove that
$B$ is also Noetherian.

Consider the algebra $\hat{B}[\hbar]$. It is Noetherian. We have a $\K^\times$-action on
$\hat{B}[\hbar]$ defined  as follows: the action on $\hat{B}$ is induced from the grading
on $B$, while $t.\hbar:=t^{-1}\hbar$ for any $t\in \K^\times$. Consider the embedding $B\rightarrow
\hat{B}[\hbar]$ sending $b\in B_i$ to $b\hbar^i$. This embedding gives an identification
$B\cong \hat{B}[\hbar]^{\K^\times}$.

Pick a left ideal $I\subset B=\hat{B}[\hbar]^{\K^\times}$. The left $\hat{B}[\hbar]$-ideal $\hat{B}[\hbar]I$
is generated by elements $j_1,\ldots,j_k\in I$. We claim that $j_1,\ldots, j_k$ generate the left ideal $I\subset B$.
Indeed, let $j\in I$ and let $b_1,\ldots,b_k\in \hat{B}[\hbar]$ be such that $j=\sum_{i=1}^k b_ij_i$.
Write $b_i:=\sum_{l=0}^{d_i} b_{il}\hbar^l$ and $b_{il}:=\sum_{q\geqslant 0}b_{il}^q$ with $ b_{il}^q\in B_q$.
Then $b_i':=\sum_{l=0}^{d_i}b_{il}^l\hbar^l$ lies in $\hat{B}[\hbar]^{\K^\times}$ and
$j=\sum_{i=1}^k b_i' j_i$.
\end{proof}

\begin{proof}[Proof of Proposition \ref{Lem:3.0.1}]
Let us prove the first claim.

First of all, we remark that the algebra $\A_\hbar$ is  Noetherian (this can be proved using the standard  argument of Hilbert, since $\A_\hbar=A[\hbar]$ as a vector space, $\A_\hbar/(\hbar)=A$, and $A$ is Noetherian).

Consider the completion $\A_\hbar'$ of $\A_\hbar$ in the $\hbar$-adic topology.
To any (left) finitely generated $\A_\hbar$-module $\M_\hbar$ one can assign its completion
$\M'_\hbar:=\varprojlim \M_\hbar/ \hbar^k\M_\hbar$,
which has a natural structure of a  $\A'_\hbar$-module. The blow-up algebra
$\Bl_{(\hbar)}(\A_\hbar)$ is nothing else but the polynomial algebra $\A_\hbar[\hbar]$
and, in particular, is Noetherian. So applying the argument of \cite{Eisenbud}, Chapter 7, we see that
\begin{enumerate}
\item the functor of the $\hbar$-adic
completion is exact.
\item $\M_\hbar'=\A'_\hbar\otimes_{\A_\hbar}\M_\hbar$.
\end{enumerate}

Let $I'_{\chi,\hbar}$ be the completion of $I_{\chi,\hbar}$ in the $\hbar$-adic topology.
Lemma \ref{Lem:1.23} applies to $\A:=\A'_\hbar$ and $\J:=I'_{\chi,\hbar}$ because
$[\J,\J]\subset \hbar^2 \A$. So $\Bl_\J(\A)$ is Noetherian. It follows that the Artin-Rees lemma (see, for example,
\cite{Eisenbud}, Chapter 5) holds for $\J\subset \A$.

Following the proof in the commutative case that can be
found in \cite{Eisenbud}, chapter 7, we prove assertion (1).
 Assertions  (3) and (4) follow from the exactness of
$0\rightarrow\M^\wedge_\hbar\xrightarrow{\hbar} \M^\wedge_\hbar\rightarrow (\M_\hbar/\hbar \M_\hbar)^\wedge_\chi\rightarrow 0$,
which stems from (1). Assertion (2) follows from (4).
\end{proof}

In the sequel we will need the following corollary of Proposition \ref{Lem:3.0.1}.

\begin{Cor}\label{Cor:3.0.1}
Let $\I_\hbar$ be a right ideal in $\A_\hbar$, $\M_\hbar$ be an $\A_\hbar$-bimodule
that is finitely generated as a left $\A_\hbar$-module.
Let $\underline{\M}_\hbar$ be the  annihilator (of the right action) of $\I_\hbar$ in $\M_\hbar$. Then the  annihilator of $\I_\hbar$ in $\M^\wedge_\hbar$ coincides with $\underline{\M}^\wedge_\hbar$.
\end{Cor}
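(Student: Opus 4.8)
The plan is to reduce the statement about $\underline{\M}_\hbar$ to the exactness of the completion functor from Proposition \ref{Lem:3.0.1}. First I would observe that the right ideal $\I_\hbar\subset\A_\hbar$ is finitely generated, say by $a_1,\dots,a_n$, since $\A_\hbar$ is Noetherian (as noted in the proof of Proposition \ref{Lem:3.0.1}). Then the annihilator $\underline{\M}_\hbar$ of $\I_\hbar$ in $\M_\hbar$ fits into an exact sequence of left $\A_\hbar$-modules
\begin{equation*}
0\rightarrow \underline{\M}_\hbar\rightarrow \M_\hbar\xrightarrow{\;\rho\;} \M_\hbar^{\oplus n},\qquad \rho(m)=(m a_1,\dots,m a_n).
\end{equation*}
Note that $\rho$ is a left $\A_\hbar$-module homomorphism precisely because the $a_i$ act on the right, and all modules here are finitely generated over the Noetherian algebra $\A_\hbar$ (the image of $\rho$, being a submodule of $\M_\hbar^{\oplus n}$, is finitely generated, and $\underline\M_\hbar$ is its kernel, hence also finitely generated). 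So the whole diagram consists of finitely generated left $\A_\hbar$-modules.

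Next I would apply the completion functor $\bullet\mapsto\bullet^\wedge=\A_\hbar^\wedge\otimes_{\A_\hbar}\bullet$ to this sequence. By Proposition \ref{Lem:3.0.1}(1) this functor is exact (and agrees with the naive inverse-limit completion), so we get an exact sequence
\begin{equation*}
0\rightarrow \underline{\M}_\hbar^\wedge\rightarrow \M_\hbar^\wedge\xrightarrow{\;\rho^\wedge\;} (\M_\hbar^\wedge)^{\oplus n}.
\end{equation*}
Here I use that completion commutes with finite direct sums, so $(\M_\hbar^{\oplus n})^\wedge=(\M_\hbar^\wedge)^{\oplus n}$. It remains to identify the map $\rho^\wedge$: since the $\A_\hbar^\wedge$-bimodule structure on $\M_\hbar^\wedge$ extends that of $\M_\hbar$ (the right action of $\A_\hbar$ on $\M_\hbar$ extends by continuity to a right $\A_\hbar^\wedge$-action on $\M_\hbar^\wedge$, compatibly with the tensor-product description), the map $\rho^\wedge$ is exactly $m\mapsto(ma_1,\dots,ma_n)$ on $\M_\hbar^\wedge$. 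Hence $\underline{\M}_\hbar^\wedge=\ker\rho^\wedge$ is the annihilator in $\M_\hbar^\wedge$ of $a_1,\dots,a_n$, which is the annihilator of the right ideal $\I_\hbar$ (equivalently of its closure $\I_\hbar^\wedge$) in $\M_\hbar^\wedge$, as desired.

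The one point that needs a little care — and which I expect to be the main (modest) obstacle — is the compatibility of the right $\A_\hbar$-action with completion along the left ideal $I_{\chi,\hbar}$: a priori the completion $\M_\hbar^\wedge=\varprojlim \M_\hbar/I_{\chi,\hbar}^k\M_\hbar$ uses only the left structure, so one must check that right multiplication by $a_i$ is continuous for the $I_{\chi,\hbar}$-adic topology and that the resulting map on the inverse limit coincides with $\rho^\wedge$. This follows because $\M_\hbar$ is finitely generated as a left module and $\A_\hbar$ acts on the left, so right multiplication by any fixed element sends $I_{\chi,\hbar}^k\M_\hbar$ into $I_{\chi,\hbar}^{k-c}\M_\hbar$ for a suitable constant $c$ (using the Artin–Rees-type control from Lemma \ref{Lem:1.23}, or more directly the fact that $I_{\chi,\hbar}$ is generated by finitely many elements each of which commutes with $a_i$ up to lower-order terms in $\hbar$); hence it extends continuously, and the extension agrees with $\rho^\wedge$ on the dense subspace $\M_\hbar$. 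With this in hand the proof is complete.
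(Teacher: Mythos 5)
Your proof is correct and takes essentially the same approach as the paper's: both rely on the Noetherianity of $\A_\hbar$ to reduce to finitely many generators $a_1,\dots,a_n$, then apply the exactness of the completion functor (Proposition \ref{Lem:3.0.1}(1)) to the left $\A_\hbar$-module map $m\mapsto(ma_1,\dots,ma_n)$ and use its $\A_\hbar^{op}$-linearity to identify the completed map with right multiplication; the paper phrases this as one generator at a time followed by intersecting kernels, which is the same thing. One small remark: your final paragraph over-thinks the continuity issue, since $I_{\chi,\hbar}$ is a two-sided ideal so $(I_{\chi,\hbar}^k\M_\hbar)a_i\subset I_{\chi,\hbar}^k\M_\hbar$ directly, and in any case the tensor-product description $\M_\hbar^\wedge=\A_\hbar^\wedge\otimes_{\A_\hbar}\M_\hbar$ already makes the right $\A_\hbar$-action automatic with no need for Artin--Rees control or the ``commutes up to lower order in $\hbar$'' heuristic.
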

\begin{proof}
At first, consider the case where $\I_\hbar$ is generated by a single element, say $a$. Consider the exact sequence
$0\rightarrow \underline{\M}_\hbar\rightarrow \M_\hbar\xrightarrow{\cdot a} \M_\hbar$. By assertion (1)
of Proposition \ref{Lem:3.0.1}, the completion functor is exact. The same assertion implies
 that the completion functor is $\A^{op}_\hbar$-linear. So we get the exact sequence $0\rightarrow \underline{\M}^\wedge_\hbar\rightarrow \M_\hbar^\wedge\xrightarrow{\cdot a} \M^\wedge_\hbar$. This completes the proof in the case when
$\I_\hbar$ is generated by one element.

Let us proceed to the general case.
The ideal $\I_\hbar$ is generated by some elements $a_1,\ldots,a_k$. Let $\underline{\M}^i_\hbar$ stand for the right annihilator of $a_i$ in $\M_\hbar$. Then $\underline{\M}_\hbar=\bigcap_i \underline{\M}^i_\hbar$.
Since the completion functor is exact, we see that $\underline{\M}^\wedge_\hbar=\bigcap_i \underline{\M}^{i\wedge}_\hbar$. To complete the proof apply the result of the previous
paragraph.
\end{proof}

\begin{Lem}\label{Lem:1.231}
\begin{enumerate}
\item The algebra $\A^\wedge_\hbar$ is Noetherian.
\item Any finitely generated left $\A^\wedge_\hbar$-module is complete and separated with respect to
$I^\wedge_{\chi,\hbar}$-adic topology.
\item Any submodule in a finitely generated $\A^\wedge_\hbar$-module is closed with respect
to $I^\wedge_{\chi,\hbar}$-adic topology.
\end{enumerate}
\end{Lem}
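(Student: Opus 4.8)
The plan is to reduce all three assertions to Lemma \ref{Lem:1.23}, applied to the algebra $\A:=\A^\wedge_\hbar$ and the two-sided ideal $\J:=I^\wedge_{\chi,\hbar}$, in exactly the way the commutative statements of Proposition \ref{Lem:3.0.1} were obtained from the Noetherianness of a blow-up algebra. First I would verify the hypotheses of Lemma \ref{Lem:1.23}. As a $\K[[\hbar]]$-module $\A^\wedge_\hbar$ is $A^\wedge_\chi[[\hbar]]$, so it is complete and separated for the $\hbar$-adic topology, and $\A^\wedge_\hbar/(\hbar)=A^\wedge_\chi$ is commutative and Noetherian (a completion of the Noetherian algebra $A$). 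Since $\chi$ is $\K^\times$-fixed, $\hbar$ maps to $0\in\m_\chi$ under $\A^\wedge_\hbar\to A^\wedge_\chi$, hence $\hbar\in\J$ and $(\hbar)\subseteq\J$. Finally, property (*2) of the star-product gives $[\A^\wedge_\hbar,\A^\wedge_\hbar]\subseteq\hbar^2\A^\wedge_\hbar=\hbar\cdot(\hbar)\subseteq\hbar\J$, so in particular $[\J,\J]\subseteq\hbar\J$. Thus Lemma \ref{Lem:1.23} applies and $\Bl_\J(\A^\wedge_\hbar)=\bigoplus_{i\ge 0}\J^i$ is Noetherian.

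Granting this, assertion (1) is immediate: $\A^\wedge_\hbar$ is the quotient of the Noetherian ring $\Bl_\J(\A^\wedge_\hbar)$ by its two-sided ideal $\bigoplus_{i\ge 1}\J^i$ (the part of positive degree), hence is Noetherian. For (2), Noetherianness of $\Bl_\J(\A^\wedge_\hbar)$ yields the Artin--Rees lemma for $\J\subseteq\A^\wedge_\hbar$, and then, following the proof in the commutative case found in \cite{Eisenbud}, Ch.~7, just as in the proof of Proposition \ref{Lem:3.0.1}, the $\J$-adic completion functor on finitely generated left $\A^\wedge_\hbar$-modules is exact and $\widehat{M}=\widehat{\A^\wedge_\hbar}\otimes_{\A^\wedge_\hbar}M$. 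But $\A^\wedge_\hbar$ is itself $\J$-adically complete and separated, being the inverse limit $\varprojlim_k\A_\hbar/I_{\chi,\hbar}^k$; hence $\widehat{\A^\wedge_\hbar}=\A^\wedge_\hbar$ and $\widehat{M}=M$ for every finitely generated $M$, which is (2). Assertion (3) then follows formally: if $N\subseteq M$ with $M$ finitely generated, then $N$ and $M/N$ are finitely generated by (1), so $M/N$ is $\J$-adically separated, i.e. $\bigcap_k(\J^k M+N)=N$; since $\bigcap_k(\J^k M+N)$ is exactly the closure of $N$ in the $I^\wedge_{\chi,\hbar}$-adic topology on $M$, the submodule $N$ is closed.

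The only step requiring genuine care is the verification that Lemma \ref{Lem:1.23} applies, and within that the condition $[\J,\J]\subseteq\hbar\J$; this is where the bracket estimate $[\A^\wedge_\hbar,\A^\wedge_\hbar]\subseteq\hbar^2\A^\wedge_\hbar$ coming from (*2), together with $\hbar\in\J$, is essential. Everything afterwards is the standard commutative-algebra package (Artin--Rees, exactness of the completion functor, Krull separatedness), which goes through verbatim in this noncommutative setting because it uses only that the blow-up algebra is Noetherian and that $\A^\wedge_\hbar$ is complete, not commutativity of $\A^\wedge_\hbar$.
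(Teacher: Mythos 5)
Your proposal is correct and follows essentially the same route as the paper: verify the hypotheses of Lemma \ref{Lem:1.23} for $\A:=\A^\wedge_\hbar$, $\J:=I^\wedge_{\chi,\hbar}$, use the resulting Noetherianness of $\Bl_\J(\A^\wedge_\hbar)$ to obtain Artin--Rees, and then run the standard completion/Krull-separatedness package. The only cosmetic difference is in (1), where the paper deduces Noetherianness directly from $\hbar$-adic completeness of $\A^\wedge_\hbar$ plus Noetherianness of $\A^\wedge_\hbar/(\hbar)=A^\wedge_\chi$, while you obtain it as the degree-zero quotient of the Noetherian blow-up algebra; both are immediate.
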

\begin{proof}
Assertion (1) is easy, for example, it follows from the observation that $\A^\wedge_\hbar$ is complete in the $\hbar$-adic topology, and $\A^\wedge_\hbar/(\hbar)=A^\wedge_\chi$ is Noetherian. To prove (2) and (3) we notice that
Lemma \ref{Lem:1.23} applies to $\A:=\A^\wedge_\hbar, \J:=I^\wedge_{\chi,\hbar}$. So the Artin-Rees lemma holds
for $\J\subset \A$. (3) and the claim in (2) that the filtration  is complete  are direct corollaries of the Artin-Rees lemma. The claim in (2) that the filtration is separated  is proved in the same way as the Krull separation theorem,
 compare with \cite{Eisenbud}, Section 5.3.
\end{proof}

\subsection{Harish-Chandra bimodules over quantum algebras}\label{SUBSECTION_completions}
In this subsection we will introduce categories of Harish-Chandra bimodules
for the quantum algebras $\U_\hbar,\Walg_\hbar,\U^\wedge_\hbar,\Walg^\wedge_\hbar$
and their $Q$-equivariant analogs.

Equip $\U_\hbar$ with the "doubled" usual $\K^\times$-action ($t.\xi=t^2\xi, t.\hbar=t\hbar, t\in \K^\times,
\xi\in\g$) and $\Walg_\hbar$ with the Kazhdan $\K^\times$-action. For $\A_\hbar=\U_\hbar$ or $\Walg_\hbar$ we say that
a graded  $\A_\hbar$-bimodule $\M_\hbar$, where the left and the right actions of $\K[\hbar]$ coincide, is Harish-Chandra if
\begin{itemize}
\item[(i)] $\M_\hbar$ is $\K[\hbar]$-flat.
\item[(ii)] $\M_\hbar$ is finitely generated as a $\A_\hbar$-bimodule.
\item[(iii)] $[a,m]\in \hbar^2 \M_\hbar$ for any $a\in \A_\hbar, m\in \M_\hbar$.
\end{itemize}

The following lemma describes simplest properties of Harish-Chandra bimodules.
\begin{Lem}\label{Lem:2.5.1}
\begin{enumerate}
\item $\M_\hbar$ is finitely generated both as a left and as a right $\A_\hbar$-module.
\item All graded components of $\M_\hbar$ are finite dimensional.
\item For $\A_\hbar=\U_\hbar$ the adjoint action of $\g$ on $\M_\hbar$: $(\xi,m)\mapsto \frac{1}{\hbar^2}[\xi,m],\xi\in\g,m\in \M_\hbar,$
is locally finite.
\end{enumerate}
\end{Lem}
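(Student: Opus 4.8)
The plan is to deduce all three assertions from the structure of $\A_\hbar$ as a graded $\K[\hbar]$-algebra together with the ``near-commutativity'' condition (iii). The crucial observation is that the bimodule structure is, up to a correction controlled by powers of $\hbar^2$, a module structure over the commutative algebra $\A_\hbar/(\hbar) = A$ (which is $S(\g)$ for $\U_\hbar$ and $\K[S]$ for $\Walg_\hbar$, both Noetherian).

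\emph{Assertion (1).} First I would reduce to the classical level. By condition (iii), for any $a\in\A_\hbar$ and $m\in\M_\hbar$ one has $am \equiv ma \pmod{\hbar^2\M_\hbar}$, so on the quotient $\M_\hbar/\hbar\M_\hbar$ the left and right $A$-module structures coincide. Since $\M_\hbar$ is a finitely generated $\A_\hbar$-bimodule and $\A_\hbar$ is Noetherian (Hilbert basis argument, as recalled in the proof of Proposition \ref{Lem:3.0.1}), the tensor-square $\A_\hbar\otimes_{\K[\hbar]}\A_\hbar^{op}$ is Noetherian, so $\M_\hbar$ is a Noetherian bimodule; hence $\M_\hbar/\hbar\M_\hbar$ is a finitely generated $A$-module via this common structure. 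Now I would lift generators: pick homogeneous $m_1,\dots,m_k\in\M_\hbar$ whose images generate $\M_\hbar/\hbar\M_\hbar$ as a left $A$-module. Then $\M_\hbar = \sum_i \A_\hbar m_i + \hbar\M_\hbar$, and because $\M_\hbar$ is graded with graded pieces bounded below (coming from finite generation and the positivity of the grading on $\hbar$) a standard graded Nakayama / successive-approximation argument shows $\M_\hbar = \sum_i \A_\hbar m_i$. The same argument on the right gives finite generation as a right module. The step needing a little care is the graded Nakayama argument: one must check the grading on $\M_\hbar$ is bounded below in each total degree, which follows since $\A_\hbar$ is nonnegatively graded in the $\hbar$-direction and $\M_\hbar$ is finitely generated.

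\emph{Assertion (2).} Each graded component $(\M_\hbar)_d$ is a finite-dimensional vector space: from (1), $\M_\hbar = \sum_i \A_\hbar m_i$ with $m_i$ homogeneous, so $(\M_\hbar)_d$ is spanned by $(\A_\hbar)_{d-d_i}m_i$; it remains to note that each graded piece $(\A_\hbar)_e$ of $\A_\hbar = A[\hbar]$ is finite-dimensional, which holds because $A$ is a finitely generated graded algebra with finite-dimensional graded pieces (true for $S(\g)$ and for $\K[S]$ with the Kazhdan grading, both of which have bounded-below, finite-dimensional graded components). Hence $(\M_\hbar)_d$ is finite-dimensional.

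\emph{Assertion (3).} For $\A_\hbar = \U_\hbar$, set $\xi\cdot m := \tfrac{1}{\hbar^2}[\xi,m]$ for $\xi\in\g$; by (iii) this lands in $\M_\hbar$ and, since $\hbar$ is central, it defines an action of $\g$ by degree-$0$ operators (with the ``doubled'' grading $t.\xi = t^2\xi$, the bracket $[\xi,\cdot]$ has degree $2$ and dividing by $\hbar^2$ restores degree $0$). Thus $\g$ preserves each finite-dimensional graded component $(\M_\hbar)_d$ by assertion (2), so every homogeneous element lies in a finite-dimensional $\g$-submodule; taking sums, every element of $\M_\hbar$ does, i.e. the adjoint $\g$-action is locally finite.

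I expect assertion (1) — specifically, promoting finite generation of $\M_\hbar/\hbar\M_\hbar$ over the commutative algebra $A$ back to finite generation of $\M_\hbar$ over $\A_\hbar$ as a one-sided module — to be the only point with real content; once that is in place, (2) and (3) are immediate bookkeeping with the grading.
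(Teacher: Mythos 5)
Your proposal is correct and follows essentially the same route as the paper: condition (iii) reduces left-module generation to a statement modulo $\hbar$, the positivity of the grading (with $\KF_0\Walg=\K$, resp.\ degree-$0$ part of $\U_\hbar$ equal to $\K$) together with finite bimodule generation gives a grading bounded below, and then graded Nakayama closes (1), with (2) and (3) following as you say from finite-dimensional graded pieces and the degree-$0$ nature of $\frac{1}{\hbar^2}[\xi,\cdot]$. The only difference is cosmetic: the paper works directly with homogeneous bimodule generators $m_1,\dots,m_k$ and the left submodule they generate, whereas you pass through $\M_\hbar/\hbar\M_\hbar$ as an $A$-module (and the Noetherianity detour there is unnecessary, since finite generation of $\M_\hbar/\hbar\M_\hbar$ is immediate from finite generation of $\M_\hbar$); this changes nothing of substance.
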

\begin{proof}
 Let $m_1,\ldots,m_k$ be homogeneous generators of the  $\A_\hbar$-bimodule
$\M_\hbar$ and let $\underline{\M}_\hbar$ be the left submodule in $\M_\hbar$ generated by
$m_1,\ldots,m_k$. From  (iii) it follows that $\M_\hbar=\underline{\M}_\hbar+\hbar \M_\hbar$.
But $\A_\hbar$ is positively graded. So (ii) implies that the grading on $\M_\hbar$
is bounded from below. Now the proof of (1) follows easily.

(2) follows from (ii).

To prove (3) we note  that the map
$\M_\hbar\rightarrow \M_\hbar, m\mapsto \frac{1}{\hbar^2}[\xi,m],\xi\in\g,$ preserves the grading.
\end{proof}

Let $\M\in \HC(\U)$. Slightly modifying a standard definition, we say that a filtration
$\F_i\M$ is good if it is $\ad(\g)$-stable,  compatible with the "doubled" standard filtration
$\F_i\U:=\F_{[i/2]}^{st}\U$ on $\U$, and $\gr\M$ is a finitely generated $\gr\U=S(\g)$-module.
To construct a good filtration take an $\ad(\g)$-stable finite dimensional subspace $M^0\subset \M$
generating $\M$ as a bimodule, and set $\F_i\M:=\F_i\U M_0$.

Given a good filtration $\F_i\M$ on $\HC(\U)$ form the Rees $\U_\hbar=R_\hbar(\U)$-bimodule $R_\hbar(\M)=
\bigoplus_{i\in \Z}\hbar^i \F_i\M$. Then $R_\hbar(\M)$  becomes an object in $\HC(\U_\hbar)$,
the $i$-th graded component being $\hbar^i \F_i\M$. Conversely, for $\M_\hbar\in \HC(\U_\hbar)$
the quotient $\M_\hbar/(\hbar-1)\M_\hbar$ lies in $\HC(\U)$ and the filtration induced by the grading
on $\M_\hbar$ is good. It is clear that the assignments $\M\mapsto R_\hbar(\M), \M_\hbar\mapsto
\M_\hbar/(\hbar-1)\M_\hbar$ are quasiinverse functors between the category of Harish-Chandra
bimodules equipped with a good filtration and the category $\HC(\U_\hbar)$.

Now let $\N$ be a $\Walg$-bimodule. We say that $\N$ is {\it Harish-Chandra} if there is
a filtration $\F_i\N$ on $\N$ such that $R_\hbar(\N)\in \HC(\Walg_\hbar)$, equivalently,
$\gr\N$ is a finitely generated $\K[S]$-module, and the filtration $\F_i\N$ is almost
commutative in the sense that $[\KF_i\Walg,\F_j\N]\subset \F_{i+j-2}\N$.

Recall the subgroup $Q:=Z_G(e,h,f)\subset G$.
Now let us define $Q$-equivariant Harish-Chandra $\Walg_\hbar$-bimodules. We say that a HC $\Walg_\hbar$-bimodule
$\N_\hbar$ is $Q$-equivariant if it is equipped with a $Q$-action such that
\begin{itemize}
\item[(i$Q$)] The $Q$-action preserves the grading.
\item[(ii$Q$)] The structure map $\Walg_\hbar\otimes \N_\hbar\otimes \Walg_\hbar\rightarrow \N_\hbar$ is $Q$-equivariant.
\item[(iii$Q$)] The differential of the $Q$-action on $\N_\hbar$ coincides with the the action
of $\q\hookrightarrow \Walg_\hbar$ given by $\frac{1}{\hbar^2}[\xi,\cdot], \xi\in\q$.
\end{itemize}

Analogously we define the category $\HC^Q(\Walg)$ of $Q$-equivariant Harish-Chandra $\Walg$-bimodules.

Now let us introduce suitable categories for the completed algebras $\U^\wedge_\hbar, \Walg^\wedge_\hbar$.
Consider the Kazhdan actions of $\K^\times$ on $\A_\hbar^\wedge:=\U^\wedge_\hbar$ or $\Walg^\wedge_\hbar$.

We say that a $\K^\times$-weakly equivariant  $\A^\wedge_\hbar$-bimodule $\M'_\hbar$,
where the left and the right actions of $\K[[\hbar]]$ coincide, is Harish-Chandra if
\begin{itemize}
\item[(i$^\wedge$)] $\M'_\hbar$ is $\K[[\hbar]]$-flat.
\item[(ii$^\wedge$)] $\M'_\hbar$ is a  finitely generated $\A^\wedge_\hbar$-bimodule
and is complete in the $I^\wedge_{\chi,\hbar}$-adic topology.
\item[(iii$^\wedge$)] $[a,m]\in \hbar^2\M'_\hbar$ for any $a\in \A^\wedge_\hbar, m\in \M'_\hbar$.
\end{itemize}

We remark that (ii$^\wedge$) and (iii$^\wedge$) easily imply that $\M_\hbar'$
is finitely generated both as a left and as a right $\A^\wedge_\hbar$-module. Conversely, any finitely generated
left $\A^\wedge_\hbar$-module is complete in the $I^\wedge_{\chi,\hbar}$-adic topology, see Lemma \ref{Lem:1.231}.
The category of Harish-Chandra $\A^\wedge_\hbar$-bimodules will be denoted by $\HC(\A^\wedge_\hbar)$.

The definition of a $Q$-equivariant Harish-Chandra $\A^\wedge_\hbar$-bimodule is given by analogy with
that of a $Q$-equivariant Harish-Chandra $\Walg_\hbar$-bimodule (one should replace (i$Q$) with the condition
that the $Q$-action commutes with the $\K^\times$-action). The category of $Q$-equivariant
Harish-Chandra $\A^\wedge_\hbar$-bimodules  is denoted by $\HC^Q(\A^\wedge_\hbar)$.

We remark that the categories $\HC(\U_\hbar),\HC^Q(\U^\wedge_\hbar)$ etc. are $\K[\hbar]$-linear  but
not abelian (due to the flatness condition, cokernels are undefined in general). It still makes
sense to speak about exact sequences in our categories. Also the categories under  consideration
have tensor product functors: for instance, for $\M^1_\hbar,\M^2_\hbar\in \HC(\A^\wedge_\hbar)$,
one can take the usual tensor product $\M_\hbar\otimes_{\A^\wedge_\hbar}\N_\hbar$ of bimodules and then take its quotient by the $\hbar$-torsion. So this tensor product satisfies (i$^\wedge$). Clearly, it satisfies (iii$^\wedge$). To see that it satisfies (ii$^\wedge$) we remark that $\M^1_\hbar\otimes_{\A^\wedge_\hbar}\M^2_\hbar$
is finitely generated as a left $\A^\wedge_\hbar$-module, because $\M^1_\hbar,\M^2_\hbar$ are.

The categories $\HC(\U_\hbar)$ and $\HC^Q(\U^\wedge_\hbar)$ are related via the completion functor
as explained in the following lemma.

\begin{Lem}\label{Lem:2.5.2}
Let $\M_\hbar\in \HC(\U_\hbar)$.
\begin{enumerate}
\item The completion $\M^\wedge_\hbar:=\varprojlim \M_\hbar/I_{\chi,\hbar}^k \M_\hbar$
has a natural structure of a $Q$-equivariant Harish-Chandra $\U^\wedge_\hbar$-bimodule.
\item The completion functor $\HC(\U_\hbar)\rightarrow \HC^Q(\U^\wedge_\hbar)$ is exact and tensor.
\end{enumerate}
\end{Lem}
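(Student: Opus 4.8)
The plan is to assemble assertion (1) from results already established, then derive assertion (2) as a formal consequence. First I would verify the Harish-Chandra axioms (i$^\wedge$)--(iii$^\wedge$) for $\M^\wedge_\hbar$. Flatness (i$^\wedge$) is immediate from Proposition \ref{Lem:3.0.1}(3), since $\M_\hbar$ is $\K[\hbar]$-flat by (i). For (ii$^\wedge$): by Proposition \ref{Lem:3.0.1}(1) we have $\M^\wedge_\hbar=\A^\wedge_\hbar\otimes_{\A_\hbar}\M_\hbar$ (with $\A_\hbar=\U_\hbar$), so $\M^\wedge_\hbar$ is finitely generated as an $\A^\wedge_\hbar$-bimodule because $\M_\hbar$ is finitely generated over $\A_\hbar$ by (ii); completeness in the $I^\wedge_{\chi,\hbar}$-adic topology is Lemma \ref{Lem:1.231}(2) once finite generation over $\A^\wedge_\hbar$ is known (and finite generation as a left module follows from Lemma \ref{Lem:2.5.1}(1) applied to $\M_\hbar$ together with the base-change description of $\M^\wedge_\hbar$). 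For (iii$^\wedge$): the bracket condition $[a,m]\in\hbar^2\M_\hbar$ of (iii) extends to the completion because the completion of $\M_\hbar$ with respect to the $I_{\chi,\hbar}$-adic topology is compatible with that of $\A_\hbar$, the multiplication maps are continuous, and $\hbar^2\M^\wedge_\hbar$ is the closure of $\hbar^2\M_\hbar$; concretely, writing an element of $\M^\wedge_\hbar$ as a limit of elements of $\M_\hbar$ and an element of $\A^\wedge_\hbar$ likewise, the commutator lands in $\hbar^2\M^\wedge_\hbar$ by passing to the limit.

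Next I would supply the $Q$-equivariant structure. The group $Q=Z_G(e,h,f)$ stabilizes $\chi=(e,\cdot)$, hence preserves the maximal ideal $\m_\chi$ and therefore the ideals $I_{\chi,\hbar}\subset\U_\hbar$; since any object of $\HC(\U_\hbar)$ carries a canonical (pro-)algebraic $Q$-action recovered from the locally finite adjoint $\g$-action of Lemma \ref{Lem:2.5.1}(3) (here one uses that $G$ is simply connected, so integrating $\g$ gives an action of $G$, which restricts to $Q$), this $Q$-action on $\M_\hbar$ is continuous for the $I_{\chi,\hbar}$-adic topology and so extends to $\M^\wedge_\hbar$. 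I would then check the three axioms (i$Q$)--(iii$Q$) in their $\wedge$-form: the $Q$-action commutes with the $\K^\times$-action because both are already present and compatible at the level of $\M_\hbar$ and the extensions are canonical; equivariance of the structure map $\U^\wedge_\hbar\otimes\M^\wedge_\hbar\otimes\U^\wedge_\hbar\to\M^\wedge_\hbar$ follows by continuity from equivariance before completion; and the differential of the $Q$-action equals $\frac{1}{\hbar^2}[\xi,\cdot]$ for $\xi\in\q\hookrightarrow\U^\wedge_\hbar$ because this identity holds on $\M_\hbar$ (it is the defining property of the $\g$-action, restricted to $\q$, using the quantum comoment map $\g\to\U_\hbar$) and both sides are continuous. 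This proves (1).

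For assertion (2), exactness of the completion functor is exactly Proposition \ref{Lem:3.0.1}(1): given a short exact sequence in $\HC(\U_\hbar)$ (in the sense of exact sequences of $\K[\hbar]$-flat bimodules), applying $\A^\wedge_\hbar\otimes_{\A_\hbar}(-)$ preserves it, and the resulting sequence consists of objects of $\HC^Q(\U^\wedge_\hbar)$ by part (1). That the functor is tensor means there is a natural isomorphism $(\M^1_\hbar\otimes_{\U_\hbar}\M^2_\hbar)^\wedge\cong\M^{1\wedge}_\hbar\otimes_{\U^\wedge_\hbar}\M^{2\wedge}_\hbar$ (after killing $\hbar$-torsion on both sides, compatibly with units): using the base-change description, the left side is $\A^\wedge_\hbar\otimes_{\A_\hbar}(\M^1_\hbar\otimes_{\A_\hbar}\M^2_\hbar)\cong(\A^\wedge_\hbar\otimes_{\A_\hbar}\M^1_\hbar)\otimes_{\A_\hbar}\M^2_\hbar\cong\M^{1\wedge}_\hbar\otimes_{\A^\wedge_\hbar}(\A^\wedge_\hbar\otimes_{\A_\hbar}\M^2_\hbar)=\M^{1\wedge}_\hbar\otimes_{\A^\wedge_\hbar}\M^{2\wedge}_\hbar$, where the middle step is associativity of tensor product and the fact that $\M^{1\wedge}_\hbar$ is already an $\A^\wedge_\hbar$-module; one then checks that taking the quotient by $\hbar$-torsion commutes with completion, which follows again from exactness of the completion functor (the $\hbar$-torsion submodule is a kernel).

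The step I expect to be the main obstacle is the bookkeeping around axiom (ii$^\wedge$): specifically, confirming that $\M^\wedge_\hbar$ is \emph{finitely generated as a bimodule over $\A^\wedge_\hbar$}, not merely obtained by base change, and that it is complete and separated in the $I^\wedge_{\chi,\hbar}$-adic topology. This is where Lemma \ref{Lem:1.231} and the Noetherianity of $\A^\wedge_\hbar$ (via the blow-up/Artin--Rees machinery of Lemma \ref{Lem:1.23}) are genuinely needed; all the other verifications are continuity arguments that reduce mechanically to the corresponding statements before completion. A secondary subtlety is making the $Q$-action on a general object of $\HC(\U_\hbar)$ precise (as a pro-algebraic action) and checking it is continuous for the $I_{\chi,\hbar}$-adic topology, but since $I_{\chi,\hbar}$ is $Q$-stable and the action is locally finite, this is routine.
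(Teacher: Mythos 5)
Your approach is the same in spirit as the paper's — verify axioms (i$^\wedge$)--(iii$^\wedge$), supply a $Q$-action, and derive (2) from Proposition \ref{Lem:3.0.1} — and most of the axiom-checking is sound. But there are two gaps, one of which is substantial.

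The substantial gap is your treatment of the $\K^\times$-action. You assert that ``both [the $Q$- and $\K^\times$-actions] are already present and compatible at the level of $\M_\hbar$ and the extensions are canonical.'' This is not correct. Objects of $\HC(\U_\hbar)$ are graded with respect to the \emph{doubled usual} $\K^\times$-action on $\U_\hbar$ (the one with $t.\xi=t^2\xi$), whereas $\HC^Q(\U^\wedge_\hbar)$ requires weak equivariance with respect to the \emph{Kazhdan} $\K^\times$-action. These are genuinely different: the Kazhdan action fixes $\chi$ (which is precisely why it descends to $\U^\wedge_\hbar$), while the doubled usual action scales $\chi$ and therefore does \emph{not} preserve $I_{\chi,\hbar}$ and cannot extend to the completion. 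So the $\K^\times$-action you need is not ``already present''; it has to be manufactured. The paper does this by twisting the grading action with $\gamma(t)$ coming from the integrated adjoint $\g$-action: for $m$ of degree $i$ one sets $(t,m)\mapsto t^{2i}\gamma(t)m$. This combination is what preserves $I_{\chi,\hbar}^k\M_\hbar$ and passes to $\M^\wedge_\hbar$; without it, the weak $\K^\times$-equivariance in (i$^\wedge$)--(iii$^\wedge$) and the compatibility (i$Q$) are not even well-posed.

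A secondary omission: you identify $\M^\wedge_\hbar=\U^\wedge_\hbar\otimes_{\U_\hbar}\M_\hbar$ via base change, but this a priori produces only a left $\U^\wedge_\hbar$-module (and a right $\U_\hbar$-module). To see that the limit $\varprojlim\M_\hbar/I_{\chi,\hbar}^k\M_\hbar$ carries a right $\U^\wedge_\hbar$-action — i.e., that the left and right $I_{\chi,\hbar}$-adic topologies on $\M_\hbar$ agree — one needs the equality $I_{\chi,\hbar}^k\M_\hbar=\M_\hbar I_{\chi,\hbar}^k$, which the paper records explicitly and derives from axiom (iii) together with $\hbar\in I_{\chi,\hbar}$. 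Your ``multiplication maps are continuous'' is using this equality implicitly; spelling it out would close the gap. The rest of your argument — flatness via Proposition \ref{Lem:3.0.1}(3), exactness and the base-change/tensor computation — is correct and is in fact a bit more detailed than the paper's one-line justification for part (2).
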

\begin{proof}
To see that $\M^\wedge_\hbar$ is indeed a $\U^\wedge_\hbar$-bimodule we remark that $I_{\chi,\hbar}^k\M_\hbar=\M_\hbar
I_{\chi,\hbar}^k$ thanks to (iii).  Equip $\M_\hbar$ with a Kazhdan $\K^\times$-action: $(t,m)\mapsto t^{2i}\gamma(t)m$ for $m$ of degree $i$ and with a $Q$-action restricted from the $G$-action (the latter is integrated from the adjoint $\g$-action, $\xi\mapsto \frac{1}{\hbar^2}\ad(\xi)$). It is straightforward to verify that
$\M^\wedge_\hbar$ becomes an object of $\HC^Q(\U^\wedge_\hbar)$.

(2) follows from assertion (1) of Proposition \ref{Lem:3.0.1}.
\end{proof}

Similarly, we have a completion functor $\HC^Q(\Walg_\hbar)\rightarrow \HC^Q(\Walg^\wedge_\hbar)$.

\section{Construction of functors}\label{SECTION_functors}
\subsection{Correspondence between
ideals}\label{SUBSECTION_corr_ideals} Here we recall the
construction of mappings between the sets $\Id(\Walg),\Id(\U)$, see  \cite{Wquant}, Subsection 3.4
for details and proofs.

Recall the algebras $\U_\hbar,\U_\hbar^\wedge, \Walg_\hbar,\Walg_{\hbar}^\wedge, \W_\hbar, \W^\wedge_\hbar,
\W^\wedge_\hbar(\Walg^\wedge_\hbar)$ and the isomorphism $\Phi_\hbar:\U_\hbar^\wedge\rightarrow \W_\hbar^\wedge(\Walg^\wedge_\hbar)$ established in Subsection \ref{SUBSECTION_decomp}.

The map $\I\mapsto \I^\dagger: \Id(\Walg)\rightarrow
\Id(\U)$ is constructed as follows. Equip $\I$ with the filtration restricted from the Kazhdan filtration on $\Walg$. Construct the
ideal $\I_\hbar:=R_\hbar(\I)\subset R_\hbar(\Walg)=\Walg_\hbar$ and
take its completion $\I^\wedge_\hbar\subset \Walg^\wedge_\hbar$. Then
construct the ideal
$\W^\wedge_\hbar(\I^\wedge_\hbar):=\W^\wedge_\hbar\widehat{\otimes}_{\K[[\hbar]]}\I^\wedge_\hbar$
in $\W^\wedge_\hbar(\Walg^\wedge_\hbar)= \U^\wedge_\hbar$. Taking
its intersection with $\U_\hbar\subset \U^\wedge_\hbar$, we get an
ideal $\I^\dagger_\hbar\subset \U_\hbar$. Finally, set
$\I^\dagger:=\I^\dagger_\hbar/(\hbar-1)\I^\dagger_\hbar$. We remark that, by construction,
the map $\I\mapsto \I^\dagger$ is $Q$-invariant: $(q\I)^\dagger=\I^\dagger$ for any $q\in Q$.

To construct a map $\J\mapsto \J_\dagger:
\Id(\U)\rightarrow \Id(\Walg)$ we, first,
pass from $\J$ to $\J_\hbar:=R_\hbar(\J)\subset \U_\hbar$ and then to its completion
$\J^\wedge_\hbar\subset
\U_\hbar^\wedge=\W^\wedge_\hbar(\Walg^\wedge_\hbar)$. The completion is $\hbar$-saturated and hence
has the form $\W^\wedge_\hbar(\I^\wedge_\hbar)$ for a unique
$\K^\times$-stable ideal $\I^\wedge_\hbar$. Then take the
intersection $\I_\hbar:=\I^\wedge_\hbar\cap \Walg_\hbar$ ($\I_\hbar$
is indeed dense in $\I^\wedge_\hbar$) and,
finally, set $\J_\dagger:=\I_\hbar/(\hbar-1)$. The ideal $\J_\dagger$
is $Q$-stable for any $\J$.

These two maps enjoy the following properties (\cite{Wquant},
Theorem 1.2.2 and its proof in Subsection 3.4).

\begin{Thm}\label{Thm:5}
\begin{itemize}
\item[(i)] $(\I_1\cap \I_2)^\dagger=\I_1^\dagger\cap \I_2^\dagger$.
\item[(ii)] $\I\supset (\I^\dagger)_\dagger$ and $\J\subset
(\J_\dagger)^\dagger$ for any $\I\in\Id(\Walg), \J\in\Id(\U)$.
\item[(iii)] $\I^\dagger\cap \Centr(\g)=\I\cap\Centr(\g)$. In the
r.h.s. $\Centr(\g)$ is embedded into $\Walg$ as explained in Subsection \ref{SUBSECTION_Walg}.
\item[(iv)] $\I^\dagger$ is primitive provided   $\I$ is.
\item[(v)] For any primitive $\J\in \Id_\O(\U)$  we have $\{\I\in \Id_{fin}(\Walg)| \I^\dagger=\J\}=\{\I\in \Id_{fin}(\Walg)| \J_\dagger\subset \I\}$.
\item[(vi)] $\codim_{\Walg}\J_\dagger=\mult_{\overline{\Orb}}\U/\J$ provided
$\overline{\Orb}$ is an irreducible component of $\VA(\U/\J)$.
\item[(vii)] Let $\I\in \Id_{fin}(\Walg)$ be  primitive. Then $\Goldie(\U/\I^\dagger)\leqslant
\Goldie(\Walg/\I)= (\dim\Walg/\I)^{1/2}$. Here $\Goldie$ stands for
the Goldie rank.
\end{itemize}
\end{Thm}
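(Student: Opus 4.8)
The plan is to reduce every assertion to the decomposition isomorphism $\Phi_\hbar\colon\U^\wedge_\hbar\xrightarrow{\ \sim\ }\W^\wedge_\hbar(\Walg^\wedge_\hbar)=\W^\wedge_\hbar\widehat{\otimes}_{\K[[\hbar]]}\Walg^\wedge_\hbar$ of Theorem \ref{Thm_decomp}, exploiting that the Weyl-algebra factor $\W^\wedge_\hbar$ is \emph{transparent}: it splits off its unit as a $\K[[\hbar]]$-module direct summand, its centre is $\K[[\hbar]]$, it quantizes a formal neighbourhood of $\chi$ inside the orbit $\Orb$ (hence is regular and contributes multiplicity one, the transverse Slodowy direction being carried entirely by $\Walg^\wedge_\hbar$), and it admits a ``Fock'' module realising an equivalence onto a suitable category of $\U^\wedge_\hbar$-modules. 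Each of (i)--(vii) will be proved by performing the corresponding operation at the level of the graded $\K[\hbar]$-algebras $\U_\hbar,\Walg_\hbar$ and then specialising $\hbar\mapsto1$; the Rees dictionary of Subsection \ref{SUBSECTION_completions} and the exactness of $\chi$-adic completion (Proposition \ref{Lem:3.0.1}) make the passage back and forth routine.

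\emph{Formal properties} (i)--(iii). Each of the four operations building $\bullet^\dagger$ --- forming the Rees ideal for the induced filtration, $\chi$-adic completion, $\W^\wedge_\hbar\widehat{\otimes}(-)$, and intersection with $\U_\hbar$ --- preserves finite intersections: the first since $\F_i(\I_1\cap\I_2)=\F_i\I_1\cap\F_i\I_2$ for induced filtrations, the second by exactness of completion, the third because $\W^\wedge_\hbar$ is flat over $\K[[\hbar]]$, the fourth trivially; this gives (i), and shows in particular that $\bullet^\dagger$ and $\bullet_\dagger$ are inclusion-preserving. For (ii) the key point is $\W^\wedge_\hbar(\I^\wedge_\hbar)\cap(1\,\widehat{\otimes}\,\Walg^\wedge_\hbar)=\I^\wedge_\hbar$, from the $\K[[\hbar]]$-splitting of $\W^\wedge_\hbar$; chasing the two composites, the filtration on $\I^\dagger$ induced from $\U$ is comparable in exactly the direction yielding $(\I^\dagger)_\dagger\subset\I$, and dually $\J\subset(\J_\dagger)^\dagger$. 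For (iii): $\Phi_\hbar$ intertwines the quantum comoment maps (Theorem \ref{Thm_decomp}), hence the two embeddings of $\Centr(\g)$; since $\Centr(\g)$ lands in $Z(\W^\wedge_\hbar(\Walg^\wedge_\hbar))=\K[[\hbar]]\,\widehat{\otimes}\,Z(\Walg^\wedge_\hbar)$, intersecting $\W^\wedge_\hbar(\I^\wedge_\hbar)$ with the Rees of $\Centr(\g)$ collapses the Weyl factor and returns $R_\hbar(\I\cap\Centr(\g))$; now set $\hbar\mapsto1$.

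\emph{Primitivity and Goldie rank} (iv), (vii). The substance of (iv) is to recover Skryabin's functor through $\Phi_\hbar$. Starting from a simple $\Walg$-module $N$ with $\Ann_\Walg N=\I$, form its completed Rees module over $\Walg^\wedge_\hbar$, apply the Fock-module functor for $\W^\wedge_\hbar$ to obtain a module over $\W^\wedge_\hbar(\Walg^\wedge_\hbar)=\U^\wedge_\hbar$, pass to the $\K^\times$- and $\g$-locally finite vectors, and set $\hbar\mapsto1$; one checks that the outcome is exactly $\Sk(N)$, which is simple (any $\g$-submodule of a Whittaker module is Whittaker, so Skryabin's equivalence preserves simplicity), and that its annihilator is $\I^\dagger$, whence $\I^\dagger$ is primitive. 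For (vii): if $\I$ is primitive of finite codimension then $\Walg/\I$ is a finite-dimensional primitive, hence simple, algebra, so $\Walg/\I\cong\operatorname{Mat}_n(\K)$ with $n^2=\dim\Walg/\I$ and Goldie rank $n$; under $\Phi_\hbar$ the completion of $\U/\I^\dagger$ becomes, densely, $\W^\wedge_\hbar\widehat{\otimes}\operatorname{Mat}_n(\K[[\hbar]])\cong\operatorname{Mat}_n(\W^\wedge_\hbar)$, and a Goldie-rank comparison (the fraction ring of $\W^\wedge_\hbar$ being a skew-field) gives $\Goldie(\U/\I^\dagger)\le n$.

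\emph{Multiplicity and the classification} (vi), (v). For (vi) one reduces modulo $\hbar$ in $\U^\wedge_\hbar/\J^\wedge_\hbar\cong\W^\wedge_\hbar\widehat{\otimes}(\Walg^\wedge_\hbar/\I^\wedge_\hbar)$: as the Weyl factor $\W^\wedge_\hbar/\hbar$ is regular and contributes multiplicity one along $\Orb$, the multiplicity $\mult_{\overline{\Orb}}\U/\J$ read off at $\chi$ equals the $\K[[\hbar]]$-rank of $\Walg^\wedge_\hbar/\I^\wedge_\hbar$, which is $\codim_\Walg\J_\dagger$. Finally (v): if $\I^\dagger=\J$ then $\J_\dagger=(\I^\dagger)_\dagger\subset\I$ by (ii); conversely $\J_\dagger\subset\I$ forces $\J\subset(\J_\dagger)^\dagger\subset\I^\dagger$ by (ii) and monotonicity, and since $\J$ and $\I^\dagger$ are both primitive with associated variety $\overline{\Orb}$, comparing the numerical invariants from (vi) and (vii) (with Joseph's description of the Goldie ranks of such ideals) forces $\J=\I^\dagger$. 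I expect the main obstacle to be the identification in (iv): verifying that the module assembled from $N$ via $\Phi_\hbar$, completion, the Fock functor, the locally finite part and $\hbar\mapsto1$ is genuinely $\Sk(N)$ with annihilator $\I^\dagger$, i.e., that the completion-theoretic and Skryabin-functor descriptions of $\bullet^\dagger$ agree; everything else is bookkeeping around $\Phi_\hbar$ and the Rees dictionary.
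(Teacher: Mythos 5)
The paper itself does not prove Theorem~\ref{Thm:5}; it cites Theorem~1.2.2 of \cite{Wquant} and the accompanying proof in Subsection~3.4 of that paper. Your reconstruction follows the same decomposition-based strategy that underlies \cite{Wquant}: everything is routed through $\Phi_\hbar:\U^\wedge_\hbar\to\W^\wedge_\hbar(\Walg^\wedge_\hbar)$, the Rees/specialization dictionary, and the ``transparency'' of $\W^\wedge_\hbar$ (triviality of its centre, flatness, unit-splitting). The formal reductions for (i)--(iii) are in order, and the reduction-mod-$\hbar$ picture for (vi) is the right one.

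A few points need sharpening. For (ii) you assert that the filtration on $\I^\dagger$ induced from $\U$ is ``comparable in exactly the direction yielding $(\I^\dagger)_\dagger\subset\I$''; this requires an actual comparison between the Kazhdan filtration on $\I^\dagger\subset\U$ and the grading coming from the $\hbar$-saturated ideal $\W^\wedge_\hbar(\I^\wedge_\hbar)\cap\U_\hbar$, and the agreement (or at least the needed containment) is not automatic --- it is verified in \cite{Wquant} and should be spelled out. For (iv) you correctly identify the real content: showing that the module assembled from a simple $N$ via completion, the Fock functor, the locally finite part and $\hbar\mapsto1$ is $\Sk(N)$ and that the resulting annihilator equals $\I^\dagger$; this identification, which in effect proves the statement~(b) of Subsection~\ref{SUBSECTION_fin_dim}, is the nontrivial core of \cite{Wquant}'s argument, and your proposal leaves it open. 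For (v), the conclusion from $\J\subset\I^\dagger$ with both primitive does not follow from Joseph's Goldie-rank formalism per se; the needed input is the Borho--Kraft result (nested primitive ideals with equal associated variety coincide), and one must first ensure $\VA(\U/\I^\dagger)=\overline{\Orb}$, which itself requires an argument (Remark~\ref{Rem:1.1.11} or Remark~\ref{Rem:4.5.1}). For (vii) the specialization $\hbar\mapsto 1$ cannot be applied to $\W^\wedge_\hbar$ directly since $\hbar$ is topologically nilpotent there; one must pass through $(\W^\wedge_\hbar)_{\K^\times-l.f.}/(\hbar-1)$, precisely as in Proposition~\ref{Prop:3.7.3}, and the Goldie-rank inequality then follows from the embedding $\U/\I^\dagger\hookrightarrow \B\otimes\operatorname{Mat}_n(\K)$ with $\B$ completely prime. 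None of these is a fatal flaw in strategy, but as written the proposal leaves the hardest step (the $\Sk$-identification in (iv)) and two genuine auxiliary facts (Borho--Kraft and the associated-variety statement) as gaps.
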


\begin{Rem}\label{Rem:1.1.10}
Actually, $\J_\dagger\cap \Centr(\g)\supset\J\cap \Centr(\g)$. This can be proved either using
an alternative description of $\J_{\dagger}$ given in Subsection \ref{SUBSECTION_Ginzburg} or deduced directly
from the construction of $\J_{\dagger}$ above in this subsection.
 \end{Rem}

\begin{Rem}\label{Rem:1.1.11}
Actually, one can show that $\I\in \Id_{fin}(\Walg)$ implies $\I^\dagger\in \Id_{\O}(\U)$.
Indeed, we have seen in \cite{Wquant} that this holds provided $\I$ is primitive. The proof is based on  the Joseph irreducibility theorem: $\VA(\U/\J)$ is irreducible for any primitive
ideal $\J\subset \U$. Let us deduce the assertion for an arbitrary $\I\in \Id_{fin}(\Walg)$. Assertion (i)
of Theorem \ref{Thm:5} shows that it holds for all semiprime ideals from $\Id_{fin}(\Walg)$.
Tracking the construction of $\I\mapsto \I^{\dagger}$ one can see that $(\I^\dagger)^k\subset (\I^k)^\dagger$.
This yields the assertion in the general case.

However, it is possible to prove that $\I^\dagger\in \Id_{\O}(\U)$ for $\I\in \Id_{fin}(\Walg)$ without
referring to the Joseph theorem and deduce the latter from here.  We will
make a remark about this, Remark \ref{Rem:4.5.1}. The only part of Theorem \ref{Thm:5} used in
Remark \ref{Rem:4.5.1} is (ii), which is pretty straightforward from the constructions.
\end{Rem}

\subsection{Homogeneous vector bundles}\label{SUBSECTION_classical}
In this subsection we will establish category equivalences between various ramifications of
the category of homogeneous vector bundles. As we pointed out in Subsection \ref{SSS_3}, the results of
this section should be considered as induction steps for the proofs of results on Harish-Chandra
$\U_\hbar$ and $\U^\wedge_\hbar$-bimodules.

Let $G$ be an arbitrary connected reductive algebraic group and $H$ be a subgroup
of $G$ such that
\begin{itemize}
\item[(A)] $G/H$ is quasi-affine ($H$ is {\it observable} in the terminology of \cite{Grosshans}).
\item[(B)] $\K[G/H]$ is finitely generated.
\end{itemize}

Consider the category $\HVB_{G/H}$ of homogeneous vector bundles on $G/H$, i.e., of $G$-equivariant
coherent sheaves on $G/H$.

\begin{Lem}\label{Lem:hom1}
\begin{enumerate}
\item If $H\subset G$ satisfies (A),(B), then $H^\circ$ does.
\item If $H\subset G$ satisfies (A),(B), then $\Gamma(G/H,M)$ is a finitely generated $\K[G/H]$-module
for any $M\in \HVB_{G/H}$.
\item For any $x\in \g\cong \g^*$ the stabilizer $G_x$ satisfies (A),(B).
\end{enumerate}
\end{Lem}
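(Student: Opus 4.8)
\textbf{Proof proposal for Lemma \ref{Lem:hom1}.}

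The plan is to treat the three assertions in the order (3), (1), (2), since (1) and (2) are most transparently proved once one knows the relevant stabilizers are reductive or at least well-behaved, and (3) is what supplies that input in our application. For assertion (3), fix $x\in\g\cong\g^*$ and use the Jordan decomposition $x=s+n$ into a semisimple and a nilpotent part, which is stable under $G_x$. First I would recall that $G_s$ is a connected reductive subgroup (the centralizer of a semisimple element in a connected reductive group), so $G_x=(G_s)_n$ is the centralizer of a nilpotent element in a connected reductive group. By the Bala--Carter theory (or simply the $\mathfrak{sl}_2$-triple picture), $G_x$ has a Levi decomposition $G_x=R\ltimes U$ with $R$ reductive --- indeed $R$ can be taken to be $Z_{G_s}(e,h,f)$ --- and $U$ unipotent. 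A subgroup with reductive Levi part is observable (this is a standard fact: reductive subgroups are observable, and an extension of an observable group by a unipotent normal subgroup is observable --- see \cite{Grosshans}), which gives (A). For (B), one uses that $G/G_x\cong \Orb$ (or the conjugacy class of $x$) is a locally closed, hence quasi-affine, subvariety of $\g$ whose closure is the affine variety $\overline{\Orb}$; since $\overline{\Orb}$ is normal would be more than we need --- what we need is simply that $\K[\Orb]$ is finitely generated, and this follows because $\Orb$ is open in its affine closure $\overline{\Orb}$ with complement of codimension $\geq 2$ (a standard property of adjoint orbits in a semisimple Lie algebra, where the boundary is a union of strictly smaller orbits), so restriction gives $\K[\overline{\Orb}]\xrightarrow{\sim}\K[\Orb]$ by normality of $\overline\Orb$; alternatively one invokes directly that $\K[G/G_x]$ is finitely generated because $G_x$ contains a maximal unipotent subgroup of its radical appropriately --- the cleanest route is Grosshans' criterion together with the codimension-$\geq 2$ boundary.

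For assertion (1), suppose $H$ satisfies (A) and (B). Since $H^\circ$ is normal in $H$ of finite index, $G/H^\circ \to G/H$ is a finite \'etale cover, in particular finite and surjective; then $\K[G/H^\circ]$ is a finite module over $\K[G/H]$, hence finitely generated as a $\K$-algebra, giving (B) for $H^\circ$. For (A), observability passes to finite-index subgroups: $G/H^\circ$ is quasi-affine because it is finite over the quasi-affine $G/H$ --- more precisely, $G/H^\circ$ embeds $G$-equivariantly as a locally closed subvariety of the spectrum of $\K[G/H^\circ]$ via the evaluation map, and one checks this map is an immersion using that it is already an immersion on $G/H$ and the cover is finite \'etale. (Equivalently, invoke \cite{Grosshans}: a subgroup is observable iff its unipotent radical is observable and ... ; since $H$ and $H^\circ$ have the same identity component of the unipotent radical, the criterion transfers.)

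For assertion (2), let $M\in\HVB_{G/H}$. The plan is to use that $M$ is the sheaf associated to a finite-dimensional $H$-module $W$, i.e. $M = G\times_H W$, so that $\Gamma(G/H,M)=(\K[G]\otimes W)^H$. Write $\K[G]$ as the union of its finite-dimensional $G\times G$-submodules; the $H$-invariants are taken for the right regular action. The key input is that $\K[G/H]=\K[G]^H$ is finitely generated by (B); then by a theorem of Grosshans (the finite generation of $\K[G/H]$ is equivalent to $H$ being a ``Grosshans subgroup''), for \emph{any} finite-dimensional $H$-module $W$ the module $(\K[G]\otimes W)^H$ is finitely generated over $\K[G]^H=\K[G/H]$. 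Concretely, one embeds $W$ into a direct sum of copies of the regular representation of $H$ restricted from a suitable finite-dimensional $G$-module $\widetilde W$ (using that $H$ is observable, so $H$-modules embed into restrictions of $G$-modules), reducing to $M$ a subbundle of a trivial bundle $\underline{\widetilde W}$; then $\Gamma(G/H,M)\subset \K[G/H]\otimes\widetilde W$ is a $\K[G/H]$-submodule of a finitely generated module over the Noetherian ring $\K[G/H]$, hence finitely generated. The main obstacle here is making the reduction ``$M$ embeds into a trivial bundle'' precise --- this is exactly where observability (A) is used, via the fact that every $H$-module is a submodule of (a direct sum of copies of, a quotient of a finite piece of) $\K[G]|_H$; I would cite \cite{Grosshans} for this rather than reprove it. Granting that, everything reduces to Noetherianity of $\K[G/H]$, which is (B) plus Hilbert's basis theorem.
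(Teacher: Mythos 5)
The paper's own proof consists entirely of citations to \cite{Grosshans} (Corollary 2.3, Theorem 4.1, Lemma 23.1(h), Theorem 4.3), so any attempt to actually argue the points from scratch, as you do, is a different presentation. Your treatment of (1) and (2) is essentially sound: for (1) the key point that $\K[G]^{H^\circ}$ is integral over $\K[G]^H$ (being a domain with a finite group $H/H^\circ$ acting, invariants $\K[G]^H$) plus finiteness of integral closure gives (B), and for (2) the reduction to a sub-bundle of a trivial bundle $G/H\times\widetilde W$ via observability, followed by Noetherianity of $\K[G/H]$, is exactly right.

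However, your argument for (3) contains two genuine errors. First, the claim \emph{``a subgroup with reductive Levi part is observable, since an extension of an observable group by a unipotent normal subgroup is observable''} is false: every linear algebraic group over $\K$ has a reductive Levi factor, yet a Borel subgroup $B=T\ltimes U$ (with $T$ reductive, $U$ unipotent normal) is not observable, as $G/B$ is projective. There is no valid criterion of the form ``observable by unipotent normal $\Rightarrow$ observable.'' The correct and much simpler argument for (A) is that $G_x$ is by definition the stabilizer of the point $x$ in the affine $G$-variety $\g$, so $G/G_x\cong Gx$ is a locally closed subvariety of $\g$ and hence quasi-affine. Second, the assertion that $\overline{\Orb}$ is normal is false in general: non-normal nilpotent orbit closures exist in exceptional Lie algebras (e.g.\ in $\mathfrak{g}_2$), and normality of $\overline{Gx}$ for general $x$ is even less available. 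Fortunately you also flag the correct route, which is the one the paper uses: $\overline{Gx}$ is affine, consists of finitely many orbits of even dimension, so $\codim(\overline{Gx}\setminus Gx)\geq 2$; then $\K[Gx]$ is the integral closure of $\K[\overline{Gx}]$ in its fraction field (by the codimension-$2$ Hartogs argument, without needing normality of $\overline{Gx}$), hence finite over it and in particular finitely generated --- this is precisely \cite{Grosshans}, Theorem 4.3. You should discard the Levi-decomposition reasoning and the normality shortcut and keep only this argument.
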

\begin{proof}
(1): For $H^\circ$ assertion (A) follows from \cite{Grosshans}, Corollary 2.3, and (B)
follows from \cite{Grosshans}, Theorem 4.1.


(2): This follows from \cite{Grosshans}, Lemma 23.1(h).

(3): This follows \cite{Grosshans}, Theorem 4.3, since $\overline{Gx}$ consists of finitely many orbits
of even dimension.
\end{proof}

We need  the following categories related to $\HVB_{G/H}$. First of all, we consider
the category $\Mod_H$ of finite dimensional $H$-modules.

To construct the second category fix an affine $G$-variety $X$ such that there is an open $G$-equivariant embedding
$G/H\hookrightarrow X$ with $\codim_{X}X\setminus (G/H)\geqslant 2$. We remark that $\K[G/H]$ is the normalization
of $\K[X]$. Then let $\Coh^G(X)$ denote the
category of $G$-equivariant coherent sheaves on $X$. This category has the Serre subcategory
consisting of all modules supported on $X\setminus G/H$. The quotient category will be denoted
by $\HVB^X_{G/H}$. 

Finally, fix an $H$-stable point $x\in G/H$.  Consider the category $\HVB^\wedge_{G/H}$ consisting of all finitely generated $\K[G/H]^\wedge_x$-modules $M$ equipped additionally with actions of $\g$ and $H$ subject to the following compatibility conditions:
\begin{itemize}
\item[(a)] The action map $\K[G/H]^\wedge_x\otimes M\rightarrow M$ is $\g$- and $H$-equivariant.
\item[(b)] The action map $\g\otimes M\rightarrow M$ is $H$-equivariant.
\item[(c)] The differential of the $H$-action on $M$ coincides with the restriction of
the $\g$-action to $\h$.
\end{itemize}

We have various functors between the categories in interest. For instance, to $P\in \Mod_H$ we can assign
the homogeneous vector bundle $\Fun_1(P):=G*_HP$ on $G/H$ with fiber $P$, see, for instance, \cite{VP}, Section 4.8, for details. It is clear that the functor $\Fun_1$ is an equivalence, a quasi-inverse equivalence
$\Fun_1^{-1}$ is given by taking the fiber at $x$. Further, we have the completion functor $\Fun_2:\HVB_{G/H}\rightarrow\HVB^\wedge_{G/H}$. The $H$-action on the completion comes from the
$H$-action on $G*_HP$ restricted from the $G$-action. It is given by the equality $h.(g*_{H}v)=hgh^{-1}*_{H}hv, g\in G,h\in H, v\in P$.

Then we can consider the functor $\Fun_3:\HVB^\wedge_{G/H}\rightarrow
\Mod_H, M\mapsto M/\m_x M$, where
 $\m_x$ denotes the maximal ideal in $\K[G/H]^\wedge_x$. It is clear that $\Fun_3\circ\Fun_2\circ\Fun_1=\operatorname{id}$.
Finally, we have the functor $\widetilde{\Fun}_4: \Coh^G(X)\rightarrow \HVB_{G/H}$ of restriction to
$G/H$. It induces the functor $\Fun_4:\HVB^X_{G/H}\rightarrow \HVB_{G/H}$.

\begin{Prop}\label{Prop:hom2}
The functors $\Fun_2,\Fun_3,\Fun_4$ are equivalences.
\end{Prop}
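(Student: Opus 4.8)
The plan is to prove the three equivalences separately, in the order $\Fun_3$, $\Fun_2$, $\Fun_4$, reducing the first two to a statement about $\Mod_H$ via $\Fun_1$ and handling $\Fun_4$ by a normalization argument.

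First I would treat $\Fun_3$. Since $\Fun_3\circ\Fun_2\circ\Fun_1=\operatorname{id}$ and $\Fun_1$ is an equivalence, it suffices to show $\Fun_3$ is fully faithful and essentially surjective, and for this it is enough to produce a quasi-inverse. The natural candidate is $P\mapsto \Fun_2(\Fun_1(P))=(G*_HP)^\wedge_x$, and the content is that for $M\in\HVB^\wedge_{G/H}$ the canonical map $(G*_H(M/\m_xM))^\wedge_x\to M$ is an isomorphism. The key observation is that the $\g$- and $H$-actions on $M$ integrate: because $M$ is a finitely generated $\K[G/H]^\wedge_x$-module with compatible locally finite $H$-action and a $\g$-action whose restriction to $\h$ differentiates the $H$-action, one can use the $\g$-action to "move" the formal neighbourhood of $x$ around $G/H$ (a standard Taylor-expansion / exponentiation argument along the lines of how $\D$-modules on $G/H$ are recovered from Harish-Chandra data), producing a $G$-equivariant coherent sheaf on the formal neighbourhood of the whole orbit, hence on $G/H$ itself since $G/H$ is a single orbit. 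Concretely, $M$ is recovered as the completion at $x$ of a genuine $G$-equivariant sheaf, and by $\Fun_1$ that sheaf is $G*_H(M/\m_xM)$. This is the step I expect to be the main obstacle: one must check that the formal integration is well-defined and $G$-equivariant, using that the $H$-action (not just its differential) is given, so that the monodromy around $H\subset G$ is accounted for.

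Once $\Fun_3$ is an equivalence, $\Fun_2$ is an equivalence as well: from $\Fun_3\circ\Fun_2\circ\Fun_1=\operatorname{id}$ and the fact that $\Fun_1,\Fun_3$ are equivalences we get that $\Fun_2$ is an equivalence (a composite of three functors, the outer two of which are equivalences and the whole of which is an equivalence, forces the middle one to be an equivalence). For $\Fun_4$ I would argue as follows. By hypothesis $\codim_X X\setminus(G/H)\geqslant 2$ and $\K[G/H]$ is the normalization of $\K[X]$; by Lemma~\ref{Lem:hom1}(2) the $G$-module $\K[G/H]$ is a finitely generated $\K[X]$-module, so $\K[G/H]$ is a coherent $\mathcal{O}_X$-algebra and the normalization morphism $\pi:\operatorname{Spec}_X\K[G/H]\to X$ is finite and an isomorphism over $G/H$. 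Restriction along the open embedding $G/H\hookrightarrow X$ gives $\widetilde{\Fun}_4:\Coh^G(X)\to\HVB_{G/H}$; its kernel is exactly the Serre subcategory of sheaves supported on $X\setminus G/H$, so $\widetilde{\Fun}_4$ descends to a faithful exact functor $\Fun_4:\HVB^X_{G/H}\to\HVB_{G/H}$. For essential surjectivity and fullness one uses that for a $G$-equivariant coherent sheaf $N$ on $G/H$ the pushforward $j_*N$ along $j:G/H\hookrightarrow X$ is again $G$-equivariant quasi-coherent, and $j_*N$ is coherent because $\codim_X X\setminus(G/H)\geqslant 2$ together with the finiteness of $\K[G/H]$ over $\K[X]$ forces the section ring $\Gamma(G/H,N)$ to be finitely generated over $\K[X]$ (again Lemma~\ref{Lem:hom1}(2)). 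Thus every object of $\HVB_{G/H}$ lifts to $\Coh^G(X)$, and since two lifts differ only by subsheaves/quotients supported on the complement, $\Fun_4$ is an equivalence onto $\HVB_{G/H}$.

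Finally I would assemble the pieces: $\Fun_1$ is an equivalence by construction, $\Fun_3$ by the integration argument, $\Fun_2$ by cancellation in $\Fun_3\circ\Fun_2\circ\Fun_1=\operatorname{id}$, and $\Fun_4$ by the normalization/extension-across-codimension-two argument. The one genuinely delicate point, worth spelling out carefully in the write-up, is the integrability in $\Fun_3$: making precise that a compatible $(\g,H)$-action on a finite $\K[G/H]^\wedge_x$-module is the same datum as a $G$-equivariant structure on a coherent sheaf on $G/H$, for which the observability hypothesis (A) and connectedness of the orbit are used to ensure there is no obstruction to globalizing the formal data.
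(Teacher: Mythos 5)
Your plan matches the paper's in its essential structure, just in a different order: the $\Fun_4$ argument (global sections over $G/H$ viewed as a coherent $\K[X]$-module via Lemma~\ref{Lem:hom1}(2), which is the same content as your $j_*$ pushforward), the reduction of $\Fun_3$ to showing $\Fun_2\circ\Fun_1\circ\Fun_3\cong\operatorname{id}$, and $\Fun_2$ by cancellation. The one place where you have identified the right statement but stopped short of proving it is the $\Fun_3$ step. Your ``Taylor expansion / formal integration'' intuition is correct; the paper's concrete realization is to embed both $M$ and $M':=\Fun_2\circ\Fun_1(P)$ (where $P=M/\m_x M$) into the common space $\Hom_\h(\U,P)$ of $\h$-equivariant linear maps $\U\to P$, by sending $m$ to $u\mapsto\pi(um)$ --- this pairing against $\U$ \emph{is} the formal Taylor expansion at $x$. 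Injectivity then comes from Krull separation, surjectivity of $\iota'$ from PBW (identifying $\Hom_\h(SV,P)$ with the formal sections of the trivialized bundle), and surjectivity of $\iota'^{-1}\circ\iota$ from Nakayama. You should supply this construction, or an equivalent one, rather than leave it as the ``delicate point.'' One small inaccuracy: hypothesis (A) plays no role in the $\Fun_3$ argument, which is purely local and formal --- (A) and (B) enter only through Lemma~\ref{Lem:hom1}(2), which feeds into the $\Fun_4$ step.
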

\begin{proof}
Let us show that $\Fun_4$ is an equivalence. Thanks to assertion (2) of Lemma \ref{Lem:hom1},
 $\Gamma(G/H,M)$ is  finitely generated as a $\K[G/H]$- and hence also as a $\K[X]$-module for any $M\in \HVB^\wedge_{G/H}$. So we can consider
 $\Gamma(G/H,M)$ as an object of $\Coh^G(X)$.  Let $\Fun_4'(M)$ be the image of
 $\Gamma(G/H,M)$ in $\HVB^X_{G/H}$. It is clear that $\Gamma(G/H,\bullet)$ is right adjoint to
 $\widetilde{\Fun}_4$. Since  $M$ can be extended to at least some coherent
   sheaf on $X$ and $X$ is affine, we see that the fiber of $\Gamma(G/H,M)$
 in $x$ is the same as that of $M$. This observation implies that $\Fun_4'$ is a two-sided inverse to
 $\Fun_4$.

Now let us show that $\Fun_3$ is an equivalence. For this it is enough to show that $\Fun_2\circ\Fun_1\circ\Fun_3$
is isomorphic to the identity functor. So we need to produce a functorial isomorphism $M\rightarrow M':=\Fun_2\circ \Fun_1(P)$, where $P:=\Fun_3(M)=M/\m_x M$. To do this we will need to interpret $M'$ differently.
Namely, consider the space $\Hom_\h(\U,P)$ of linear maps $\varphi:\U\rightarrow P$
that are $\h$-equivariant in the sense that $\varphi(\xi u)=\xi.\varphi(u)$ for any $u\in \U, \xi\in\h$.
To establish an isomorphism $M\rightarrow M'$ we will proceed as follows:
\begin{enumerate}
\item construct some natural maps $\iota:M\rightarrow \Hom_\h(\U,P),\iota':M'\rightarrow \Hom_\h(\U,P)$,
\item equip $\Hom_\h(\U,P)$ with $\g$- and $H$-actions and with a $\K[G/H]^\wedge_x$-module
structure so that $\iota,\iota'$ become $\g$- and $H$-equivariant $\K[G/H]^\wedge_x$-module homomorphisms,
\item Show that $\iota,\iota'$ are injective,
\item Show that $\iota'$ is surjective,
\item Show that $\iota'^{-1}\circ \iota$ is surjective.
\end{enumerate}
Since the pair $(M',\iota')$ is a special case of $(M,\iota)$ (because $\Fun_3(M)=P$), it is enough to perform steps (1)-(3) only for $(M,\iota)$.

(1): Let $\pi:M\rightarrow P$ denote the projection.
Recall that we have a $\g$- and hence a $\U$-action on $M$. Map a pair $(u,m),u\in \U, m\in M,$ to
$\pi(um)$. This is an $H$-equivariant bilinear map, in particular,  $\pi(\xi u m)=\xi \pi(um)$ for
any $\xi\in \h$. In other words, we get a linear map $\iota:M\rightarrow \Hom_\h(\U,P)$.

(2): The group $H$ and the algebra $\g$ act on $\Hom_\h(\U,P)$ by $[\eta.\varphi](u)=\varphi(u\eta), [h.\varphi](u)=h\varphi(h^{-1}.u), \eta\in \g, h\in H$ (it is straightforward to check
that the actions are well-defined). The map $\iota$ becomes $\g$- and $H$-equivariant.

Also $\Hom_\h(\U,P)$ is a $\K[G/H]^\wedge_x$-module: for $f\in \K[G/H]^\wedge_x,
\varphi\in \Hom(\U,P)$ we define  $f\varphi$ using the Leibnitz rule: if $[f\varphi](u)$
is already defined, then we set $[f\varphi](\eta u):=(\eta.f)(x)\varphi(u)+ f(x)\varphi(\eta u)$.
It is clear that the multiplication is well-defined and $\iota$  is
$\K[G/H]^\wedge_x$-linear. Finally, it is straight-forward to check that the $\K[G/H]^\wedge_x$-, $\g$-,
and $H$-actions we have introduced satisfy the compatibility conditions (a)-(c) above.

(3): Let us check that
$\iota$ is injective.
Assume the converse, then there is $m\neq 0$ with $\pi(um)=0$ for all $u\in \U$.
But the velocity vector fields $\xi_*, \xi\in\g,$ span the $\K[G/H]^\wedge_x$-module $\Der(\K[G/H]^\wedge_x)$. It follows that
$m\in \bigcap_n \m_x^n M$. Since $M$ is finitely generated, the latter intersection is zero
by the Krull separation theorem.


(4): Let us show that $\iota'$ is surjective. Pick a subspace $V\subset \g$ complimentary to $\h$.
We have a natural map $\tau:\Hom_\h(\U,P)\rightarrow \Hom(S V,P)$ of restriction to $S V\subset \U$ and this map
is injective thanks to the PBW theorem. Let us recall that $M'$ is nothing but the space of
sections of $\Fun_1(P)=G*_HP$ over the formal neighborhood of $x$ that is just a formal
polydisc. The velocity vector map $\xi\mapsto \xi_x$ identifies
$V$ with the tangent space to $G/H$ at $x$. From this it follows that $\tau\circ \iota':M'\rightarrow \Hom_\h(SV,P)$ is surjective. So $\iota'$ is surjective.

(5): Let us check that $\iota'^{-1}\circ \iota$ is surjective. Since $M'$ is finitely generated,
to do this it is enough to prove that the induced map $M/\m_x M\rightarrow M'/\m_x M'$ is surjective.
Analyzing the defintions
of $\iota,\iota'$ we see that the map under consideration is just the identity map $P\rightarrow P$.

Finally, let us remark that, by its construction, the homomorphism $\iota'^{-1}\circ \iota:M\rightarrow M'$ is
functorial.
%
%
\end{proof}

We need an alternative description of the equivalence $\Fun_4\circ \Fun_1\circ \Fun_3:\HVB_{G/H}^\wedge\rightarrow \HVB_{G/H}^X$. This description
will be crucial in the sequel, compare with Subsection \ref{SSS_3}. For a $\g$-module
 $M$ by $M_{G-l.f.}$ we denote the sum of all finite dimensional
submodules in $M_{\g-l.f.}$, where the $\g$-action can be integrated to an algebraic $G$-action.
Of course, if $G$ is simply connected semisimple group (which is our usual convention),
then $M_{G-l.f.}=M_{\g-l.f.}$.

Pick  $M\in \HVB^\wedge_{G/H}$. Consider the subspace $M_{G-l.f.}\subset M$.
It is $H$-stable, let $\rho$ denote the corresponding representation
of $H$ in $M_{G-l.f.}$. On the other hand, $M$ has the structure of a $G$-module integrated
from the $\g$-action. Restricting the representation of $G$ to $H$, we get the representation $\rho'$
of $H$ in $M_{G-l.f.}$. The action map $\g\otimes M_{G-l.f.}\rightarrow M$ is equivariant with respect
to both representations $\rho,\rho'$.
So $\rho(h)\rho'(h)^{-1}$ commutes with $G$ for any $h\in H$. The differentials of $\rho,\rho'$ coincide hence $\rho(h)=\rho'(h)$ for any $h\in H^\circ$.  So
$\sigma(h)=\rho(h)\rho'(h)^{-1}$ defines a representation of
$H/H^\circ$ in $M_{G-l.f.}$ commuting with $G$. The following proposition shows that
$\Fun_4\circ \Fun_1\circ \Fun_3(M)$ coincides with the image of $M_{G-l.f.}^{H/H^\circ}$
in $\HVB^X_{G/H}$.

\begin{Prop}\label{Prop:hom3}
Let $P\in \Mod_H$ and $M=\Fun_2\circ \Fun_1(P)$. Then
\begin{enumerate}\item $M_{G-l.f.}=\Gamma(G/H^\circ, G*_{H^\circ}P)$,
\item $M_{G-l.f.}^{H/H^\circ}=\Gamma(G/H,G*_HP)(=\widetilde{\Fun}_4\circ \Fun_1(P))$.
\end{enumerate}
\end{Prop}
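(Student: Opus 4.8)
The plan is to prove (1) directly and then derive (2) from it by finite étale descent along $\pi\colon G/H^\circ\to G/H$. Throughout I would use the identification of $M=\Fun_2\circ\Fun_1(P)$ with the coinduced module $\Hom_\h(\U,P)$ established inside the proof of Proposition \ref{Prop:hom2} — there $\iota'$ is shown to be injective and, by the surjectivity argument in step (4) of that proof, in fact bijective. Under this identification $\g$ acts on $M$ by $[\eta\varphi](u)=\varphi(u\eta)$ and $H$ by $[h.\varphi](u)=h\cdot\varphi(\Ad(h^{-1})u)$, so that $M_{G-l.f.}$ becomes the $G$-locally finite part of $\Hom_\h(\U,P)$, with the integrated $G$-action the standard one on a coinduced module.

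For (1) I would regard $P$ as an $H^\circ$-module by restriction (its $H^\circ$-action is determined by the $\h$-action and integrates since $P$ is an $H$-module) and consider the Taylor-expansion-at-the-base-point map $T\colon\Gamma(G/H^\circ,G*_{H^\circ}P)\to\Hom_\h(\U,P)$. It is $G$- and $\h$-equivariant by construction; it is injective because $G/H^\circ$ is connected and $G*_{H^\circ}P$ is locally free; and it lands in $M_{G-l.f.}$ because, by Lemma \ref{Lem:hom1}(1),(2), $\Gamma(G/H^\circ,G*_{H^\circ}P)$ is a locally finite $G$-module, finitely generated over $\K[G/H^\circ]$. The remaining point — that $T$ is onto $M_{G-l.f.}$ — I would obtain by a Yoneda-type argument: $\operatorname{coker}T$ is locally finite and $G$ is reductive in characteristic zero, so $\operatorname{coker}T=0$ once $\Hom_G(E,T)$ is bijective for every finite-dimensional $G$-module $E$; but $\Hom_G(E,\Gamma(G/H^\circ,G*_{H^\circ}P))=\Hom_\h(E,P)$ by Frobenius reciprocity, $\Hom_G(E,\Hom_\h(\U,P)_{G-l.f.})=\Hom_\g(E,\Hom_\h(\U,P))=\Hom_\h(E,P)$ by the coinduction adjunction for $\h\subset\g$, and under $T$ these identifications are compatible (both are evaluation at the base point). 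This proves (1); it also shows $M_{G-l.f.}$ is finitely generated over $\K[G/H^\circ]$, hence over $\K[G/H]$, which is what one needs to regard it as an object of $\HVB^X_{G/H}$.

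For (2): $\pi$ is a finite étale Galois cover with group $H/H^\circ$ and $\pi^*(G*_HP)=G*_{H^\circ}P$, so $\Gamma(G/H,G*_HP)=\Gamma(G/H^\circ,G*_{H^\circ}P)^{H/H^\circ}$, where the $H/H^\circ$-action $\delta$ combines deck transformations with the $H$-module structure of $P$; on a section $f\colon G\to P$ it is $(\delta(h_0)f)(g)=h_0\cdot f(gh_0)$. Given (1), what is left is to check that, transported to $M_{G-l.f.}$ through $T$, the action $\delta$ coincides with $\sigma(h_0)=\rho(h_0)\rho'(h_0)^{-1}$, where $\rho$ is the $H$-action carried by $M$ as an object of $\HVB^\wedge_{G/H}$ and $\rho'$ the restriction to $H$ of the integrated $G$-action. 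Unwinding the formulas above one finds that on sections $\rho(h_0)f=\bigl(g\mapsto h_0\cdot f(h_0^{-1}gh_0)\bigr)$ while $\rho'(h_0)f=\bigl(g\mapsto f(h_0^{-1}g)\bigr)$, whence $\rho(h_0)\rho'(h_0)^{-1}f=\bigl(g\mapsto h_0\cdot f(gh_0)\bigr)=\delta(h_0)f$; taking $H/H^\circ$-invariants gives $M_{G-l.f.}^{H/H^\circ}=\Gamma(G/H,G*_HP)$.

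The step I expect to be the main obstacle is exactly this last matching of actions in (2): on the one space $M_{G-l.f.}$ there sit three a priori different $H$-actions — the structural $\HVB^\wedge_{G/H}$-action $\rho$, the restriction $\rho'$ of the integrated $G$-action, and the descent action $\delta$ — and the whole of (2) rests on showing $\rho$ and $\rho'$ differ on $H/H^\circ$ by precisely $\delta$; lining up the various conjugations and inverses in the three defining formulas is the genuinely fiddly part. A secondary subtlety, already present in (1), is that one must work with $G$-local finiteness, not merely $\g$-local finiteness — this is what forces the relevant $\h$-invariants to be $H^\circ$-invariants and makes the cover $G/H^\circ$ (rather than $G/H$) appear; for a general connected reductive $G$ the two notions of local finiteness differ, which is the conceptual source of the $H^\circ$ in (1).
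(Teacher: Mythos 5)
Your proof is correct and follows essentially the same route as the paper's. The one cosmetic difference is in part (1): the paper first establishes the invariant case $N^G=(M_{G\text{-l.f.}})^G=M^{\g}$ directly (via the projection $\pi\colon M\to P$ and the equalities $\pi(N^G)=P^{H^\circ}=P^{\h}$), and then bootstraps to arbitrary irreducible $G$-modules $L$ by tensoring with $L^*$; you instead identify both $\Hom$-spaces with $\Hom_{\h}(E,P)$ in one stroke using Frobenius reciprocity on the induction side and the coinduction adjunction on the $\Hom_{\h}(\U,P)$ side, checking compatibility via evaluation at the base point. These are the same calculation, packaged slightly differently. Your part (2) — pulling back along the finite Galois cover $G/H^\circ\to G/H$ and matching the descent action $\delta$ with $\sigma=\rho\rho'^{-1}$ by unwinding the three formulas — is exactly what the paper does.
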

\begin{proof}
(1): Fix a point $\widetilde{x}\in G/H^\circ$ mapping to $x$ under the natural projection
$G/H^\circ\rightarrow G/H$.

The objects in $\HVB^\wedge_{G/H^\circ}$ and $\HVB^\wedge_{G/H}$ obtained from $P\in \Mod_{H}$ are naturally
isomorphic. So we have a natural map $N:=\Gamma(G/H^\circ, G*_{H^\circ}P)\rightarrow M$. This map is injective:
any section in its kernel vanishes in $x$ together with its $G$-translates
and hence vanishes everywhere. Since $N=(\K[G]\otimes P)^H\subset \K[G]\otimes P$ and $\K[G]\otimes P$
is clearly $G$-l.f.,
we see that $N\subset M_{G-l.f.}$.

First of all, let us check that $N^G=(M_{G-l.f.})^G=M^\g$. Recall that we have the projection
$\pi:M\rightarrow P$. Analogously to the proof of the injectivity of $\iota$ in
Proposition \ref{Prop:hom2} one gets that
the restriction of $\pi$ to $M^\g$ is injective. Also it is clear that $\pi(M^\g)\subset P^\h$.
On the other hand, the restriction of $\pi$ to $N$ is nothing else but taking the value of
a section at $\widetilde{x}$. So $\pi(N^G)=P^{H^\circ}=P^\h$. It follows that $N^G=(M_{\g-l.f.})^G$.

To prove that $N=M_{G-l.f.}$ one needs to verify that the spaces of $G$-equivariant linear maps $\Hom_G(L,N)\subset\Hom_G(L,M_{\g-l.f.})$ coincide
for an arbitrary irreducible $G$-module $L$. But $\Hom_G(L,N)=(L^*\otimes N)^G=\Gamma(G/H^\circ,
G*_{H^\circ}( L^*\otimes P))$ and $\Hom_G(L,M_{G-l.f.})=(L^*\otimes M)^\g$. It is clear that $\Fun_2\circ \Fun_1(L^*\otimes P)=L^*\otimes M$.   The equality $\Hom_G(L,N)=\Hom_G(L,M_{G-l.f.})$ follows from the previous
paragraph.

(2): To prove the claim we will need to describe the action $\sigma$ of $H/H^\circ$ on $M_{G-l.f.}=N$. Let us note that $H/H^\circ$ acts on the total space of $G*_{H^\circ}P$ by $h.(g*_{H^\circ}v)=gh^{-1}*_{H^\circ}hv, h\in H$.
Let us prove that $\sigma$ is induced by this action.

The  $G$-action on $M$ is induced by the action $g_1.(g*_{H^\circ}v)=g_1g*_{H^\circ}v$. So $\rho'$ is induced from the action $h.(g*_{H^\circ}v)=hg*_{H^\circ}v$. On the other hand, the description of the $H$-action  on $M=\Fun_2\circ \Fun_1(P)$ implies that $\rho$ is induced from the action $h.(g*_{H^\circ}v)=hgh^{-1}*_{H^\circ}hv$.
So $\sigma(h)=\rho(h)\rho'(h)^{-1}$ is induced from the required $H/H^\circ$-action.

Now it remains to notice that $N^{H/H^\circ}=\Gamma(G/H, G*_HP)$.
\end{proof}

%


Below we will use results of this subsection in the special case  $H:=G_\chi, X:=\overline{\Orb}$. The subgroup $H$ enjoys the properties (A),(B) by assertion (3) of Lemma \ref{Lem:hom1}.

\subsection{Construction of functors between $\HC_{\overline{\Orb}}(\U_\hbar), \HC^Q_{fin}(\Walg_\hbar)$}\label{SUBSECTION_Construction}
In this subsection we  construct  functors
$\bullet_\dagger:\HC(\U_\hbar)\rightarrow \HC^Q(\Walg_\hbar),
\bullet^\dagger: \HC_{fin}^Q(\Walg_\hbar)\rightarrow
\HC_{\overline{\Orb}}(\U_\hbar)$, where the last two categories are defined as follows.
$\HC_{\overline{\Orb}}(\U_\hbar)$ is the full subcategory in $\HC(\U_\hbar)$ consisting of
all bimodules $\M_\hbar$ with $\VA(\M_\hbar)\subset \overline{\Orb}$, while $\HC^Q_{fin}(\Walg)$
is the full subcategory in $\HC^Q(\Walg_\hbar)$ consisting of all bimodules of finite
rank over $\K[\hbar]$.

Recall that the algebras $\U^\wedge_\hbar$ and
$\W^\wedge_\hbar(\Walg^\wedge_\hbar)$ are identified by means of the isomorphism
$\Phi_\hbar$ introduced in the end of Subsection \ref{SUBSECTION_decomp}.

The next proposition is essential in our construction.

\begin{Prop}\label{Prop:3.4.1}
The  categories $\HC^Q(\Walg_\hbar),\HC^Q(\Walg^\wedge_\hbar),\HC^Q(\U^\wedge_\hbar)$ are equivalent.
Quasiinverse equivalences look as follows:
\begin{itemize}
\item $\HC^Q(\Walg_\hbar)\rightarrow \HC^Q(\Walg^\wedge_\hbar)$: $\Nil_\hbar\mapsto \Nil^\wedge_\hbar$.
\item $\HC^Q(\Walg^\wedge_\hbar)\rightarrow \HC^Q(\Walg_\hbar)$: $\Nil'_\hbar\mapsto
(\Nil'_\hbar)_{\K^\times-l.f.}$.
\item $\HC^Q(\Walg^\wedge_\hbar)\rightarrow \HC^Q(\U^\wedge_\hbar)$: $\Nil'_\hbar\mapsto \W_\hbar^\wedge(\Nil'_\hbar):=
\W^\wedge_\hbar\widehat{\otimes}_{\K[[\hbar]]}\Nil'_\hbar$.
\item $\HC^Q(\U^\wedge_\hbar)\rightarrow \HC^Q(\Walg^\wedge_\hbar)$: $\M'_\hbar\mapsto (\M'_\hbar)^{\ad V}$.
\end{itemize}
\end{Prop}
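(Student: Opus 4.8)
The plan is to establish the three claimed equivalences in two independent steps: first the pair of functors between $\HC^Q(\Walg^\wedge_\hbar)$ and $\HC^Q(\U^\wedge_\hbar)$ coming from tensoring with the Weyl algebra, and then the pair between $\HC^Q(\Walg_\hbar)$ and $\HC^Q(\Walg^\wedge_\hbar)$ coming from completion and from taking the $\K^\times$-locally finite part. For the Weyl-algebra step, the key input is the decomposition $\Phi_\hbar:\U^\wedge_\hbar \xrightarrow{\sim} \W^\wedge_\hbar(\Walg^\wedge_\hbar) = \W^\wedge_\hbar \widehat{\otimes}_{\K[[\hbar]]} \Walg^\wedge_\hbar$ from Theorem \ref{Thm_decomp}. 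One verifies that $\Nil'_\hbar \mapsto \W^\wedge_\hbar(\Nil'_\hbar)$ sends $\HC^Q(\Walg^\wedge_\hbar)$ to $\HC^Q(\U^\wedge_\hbar)$: flatness (i$^\wedge$) is preserved because $\W^\wedge_\hbar$ is $\K[[\hbar]]$-flat; finite generation (ii$^\wedge$) because $\W^\wedge_\hbar$ is module-finite over its center plus $\Walg^\wedge_\hbar$ in the appropriate completed sense; the commutator condition (iii$^\wedge$) because $[\W^\wedge_\hbar,\W^\wedge_\hbar]\subset\hbar^2\W^\wedge_\hbar$ and likewise for $\Walg^\wedge_\hbar$; and the $Q$- and $\K^\times$-equivariance because the decomposition isomorphism is $Q\times\K^\times$-equivariant. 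For the quasi-inverse, $\M'_\hbar \mapsto (\M'_\hbar)^{\ad V}$, one uses that $V=[\g,f]$ carries the symplectic form $\omega$ and that $\W^\wedge_\hbar = \K[V^*]^\wedge_0[[\hbar]]$ acts on $\M'_\hbar$; the point is the standard fact (an $\hbar$-adic, completed version of the statement that the Weyl algebra has no nontrivial Harish-Chandra bimodules) that every Harish-Chandra $\W^\wedge_\hbar$-bimodule is free, so $\M'_\hbar \cong \W^\wedge_\hbar \widehat\otimes (\M'_\hbar)^{\ad V}$ with $(\M'_\hbar)^{\ad V}$ a $\Walg^\wedge_\hbar$-bimodule. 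The unit and counit of the adjunction are then isomorphisms, giving the equivalence.

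For the second step, $\Nil_\hbar\mapsto\Nil^\wedge_\hbar$ lands in $\HC^Q(\Walg^\wedge_\hbar)$ by the completion machinery of Subsection \ref{SUBSECTION_bimod} (Proposition \ref{Lem:3.0.1}, Lemma \ref{Lem:2.5.2} and its $\Walg_\hbar$-analogue). Conversely, $\Nil'_\hbar\mapsto(\Nil'_\hbar)_{\K^\times-l.f.}$ lands in $\HC^Q(\Walg_\hbar)$: the crucial feature, as indicated in the introduction, is that $\Walg_\hbar=\K[S][\hbar]$ is \emph{non-negatively graded} and the maximal ideal $I_{\chi,\hbar}$ of $\chi\in S$ is a graded ideal, so the grading on $\Walg^\wedge_\hbar$ is still non-negative and $\Walg^\wedge_\hbar$ is the completion of $\Walg_\hbar$ with respect to a \emph{positive} grading. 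Hence $(\Walg^\wedge_\hbar)_{\K^\times-l.f.}=\Walg_\hbar$, and more generally for $\Nil'_\hbar\in\HC^Q(\Walg^\wedge_\hbar)$ the graded pieces of $(\Nil'_\hbar)_{\K^\times-l.f.}$ are finite-dimensional and the grading is bounded below (because finitely many $\K^\times$-eigenvectors generate and each generates a submodule whose degrees are bounded below). One checks conditions (i)--(iii) and (i$Q$)--(iii$Q$) directly, using that the $\K^\times$- and $Q$-actions on $\Nil'_\hbar$ commute. To see the two functors are mutually quasi-inverse: the natural map $(\Nil_\hbar)^\wedge_{\K^\times-l.f.}\hookleftarrow\Nil_\hbar$ is an isomorphism because completion does not change graded pieces when the ideal is graded and the grading is bounded below; and $((\Nil'_\hbar)_{\K^\times-l.f.})^\wedge\to\Nil'_\hbar$ is an isomorphism because a finitely generated complete module over $\Walg^\wedge_\hbar$ with a compatible positive grading is recovered from its finite part by completion.

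Composing the two pairs gives all the stated quasi-inverse equivalences (the composite $\HC^Q(\Walg_\hbar)\to\HC^Q(\U^\wedge_\hbar)$ is $\Nil_\hbar\mapsto\W^\wedge_\hbar(\Nil^\wedge_\hbar)$). I expect the main obstacle to be the Weyl-algebra step, specifically the claim that every $Q$-equivariant Harish-Chandra $\W^\wedge_\hbar$-bimodule is free of finite rank in a way compatible with all the structure --- this is where one needs a careful completed/$\hbar$-adic version of the elementary computation for the Weyl algebra, keeping track of flatness over $\K[[\hbar]]$ and of the $\K^\times$- and $Q$-equivariance so that the splitting $\M'_\hbar\cong\W^\wedge_\hbar\widehat\otimes(\M'_\hbar)^{\ad V}$ is canonical and functorial. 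The grading/locally-finite-part arguments in the second step, by contrast, are routine manipulations with Rees constructions and completions of the kind already carried out in Subsection \ref{SUBSECTION_bimod}.
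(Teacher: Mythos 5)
Your overall plan matches the paper's: split the proof into the completion/locally-finite step and the Weyl-algebra step, and handle the former via positivity of the grading on $\Walg_\hbar$. That first step is sketched correctly and in the same spirit as the paper (the paper spells out: $(\Nil_\hbar^\wedge)^{(i)}\cong\Nil_\hbar^{(i)}$ via an inverse-limit argument in each graded degree, density of the l.f.~part, and finite generation of the l.f.~part via Noetherianity of $\Walg^\wedge_\hbar$).

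The gap is exactly where you predict it, and you have not closed it. You call the decomposition $\M'_\hbar\cong\W^\wedge_\hbar\widehat\otimes(\M'_\hbar)^{\ad V}$ a ``standard fact'' about Harish-Chandra bimodules over the Weyl algebra, but this is misleading in two ways. First, the objects here are $\U^\wedge_\hbar$-bimodules, not $\W^\wedge_\hbar$-bimodules; the claim is that the $\W^\wedge_\hbar$-action on an object of $\HC^Q(\U^\wedge_\hbar)$ ``splits off'' a free factor, i.e.\ that the counit $\W^\wedge_\hbar((\M'_\hbar)^{\ad V})\to\M'_\hbar$ is an isomorphism. Second, and more to the point, this does not reduce to a citation. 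The paper carries out a genuine construction: fixing a symplectic basis $p_1,q_1,\dots,p_l,q_l$ of $V$, given $m_0\in\M'_\hbar\setminus\n\M'_\hbar$ (where $\n$ is the maximal ideal of $\W^\wedge_\hbar$), one successively builds $m_0' = m_0 - \sum_i \tfrac{1}{i}p_1^i m^i$ commuting with $q_1$ (the series converges since $\M'_\hbar$ is complete, and the $m^i$ are defined recursively via $[\q_1,\cdot]=\hbar^2(\cdot)$, using $\K[[\hbar]]$-flatness to divide by $\hbar^2$), then corrects further to commute with $p_1$, and iterates over the remaining $p_j,q_j$. This produces an $\ad V$-invariant element congruent to $m_0$ mod $\n\M'_\hbar$; surjectivity of the counit then follows from a Nakayama-type argument (using Lemma \ref{Lem:1.231}), and injectivity is the statement that every nonzero $\hbar$-saturated subbimodule of $\W^\wedge_\hbar(\Nil'_\hbar)$ meets $(\Nil'_\hbar)^{\ad V}$, which requires its own argument (cf.~Lemma 3.4.3 in \cite{Wquant}). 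None of this is in your proposal; you flag it as ``the main obstacle'' and stop there.

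One smaller inaccuracy: your justification of (ii$^\wedge$) for $\W^\wedge_\hbar(\Nil'_\hbar)$ --- ``because $\W^\wedge_\hbar$ is module-finite over its center plus $\Walg^\wedge_\hbar$ in the appropriate completed sense'' --- is not right; $\W^\wedge_\hbar$ is nowhere near finite over its center $\K[[\hbar]]$. The correct (and simpler) reason is that a finite generating set of $\Nil'_\hbar$ over $\Walg^\wedge_\hbar$ also generates $\W^\wedge_\hbar(\Nil'_\hbar)$ over $\W^\wedge_\hbar(\Walg^\wedge_\hbar)=\U^\wedge_\hbar$.

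Also note that the paper deduces finite generation of $(\M'_\hbar)^{\ad V}$ \emph{after} establishing the isomorphism $\W^\wedge_\hbar((\M'_\hbar)^{\ad V})\cong\M'_\hbar$, using Noetherianity of $\U^\wedge_\hbar$: one cannot simply assert this at the outset. So the logical order is: first prove the counit is an isomorphism (surjectivity then injectivity), then conclude $(\M'_\hbar)^{\ad V}\in\HC^Q(\Walg^\wedge_\hbar)$.
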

\begin{proof}
 To prove that the first two functors are mutually quasiinverse one
needs to check that:
\begin{enumerate}
\item $\Nil_\hbar$ coincides with the $\K^\times$-l.f. part of its
completion.
\item For any $\Nil'_\hbar$ its $\K^\times$-l.f. part is dense in
$\Nil'_\hbar$ and is a finitely generated left $\Walg_\hbar$-module.
\end{enumerate}
Both claims  follow from the fact that $\Walg_\hbar$ is
positively graded. For reader's convenience we give proofs here.

Let us prove (1). For a $\K^\times$-module $N$ let $N^{(i)}$ denote the space of all vectors $v$
with $t.v=t^i v$.   Let $\m$ denote the maximal ideal of 0 in $\Walg_\hbar$.
Then $\m^{(i)}=\{0\}$ for $i\leqslant 0$ and $\Nil_\hbar^{(i)}=\{0\}$ for $i\ll 0$.
Also $\Nil_\hbar/ \m^m \Nil_\hbar$
is a  finite dimensional vector space with an algebraic action of $\K^\times$.
For $m>m_0$ we have the exact sequence $(\m^{m_0} \N_\hbar)^{(i)}\rightarrow (\Nil_\hbar/ \m^m \Nil_\hbar)^{(i)}\rightarrow (\Nil_\hbar/ \m^{m_0}\Nil_\hbar)^{(i)}\rightarrow 0$. It follows that for any $i$ there is $m_0$ such that   $(\Nil_\hbar/ \m^m \Nil_\hbar)^{(i)}\xrightarrow{\sim} (\Nil_\hbar/ \m^{m_0}\Nil_\hbar)^{(i)}$
 for $m>m_0$. In particular, $(\Nil_\hbar^\wedge)^{(i)}$ projects isomorphically onto $(\Nil_\hbar/\m^m \Nil_{\hbar})^{(i)}$ for $m\gg 0$. Hence the natural map $\Nil_\hbar^{(i)}\rightarrow (\Nil_\hbar^\wedge)^{(i)}$
is an isomorphism. This completes the proof of (1).

Let us prove (2). We need to check two claims: first, that $(\Nil'_\hbar)_{\K^\times-l.f.}$ is dense
in $\Nil'_\hbar$ and, second, that $(\Nil'_\hbar)_{\K^\times-l.f.}$ is finitely generated as a $\Walg_\hbar$-module.
The first one is equivalent to the assertion that the natural projection $(\Nil'_\hbar)_{\K^\times-l.f.}\rightarrow \Nil'_\hbar/\Nil'_\hbar \m^n$ is surjective for any $n$ or equivalently that $(\Nil'_\hbar)^{(i)}\twoheadrightarrow
(\Nil'_\hbar/ \Nil'_\hbar\m^n)^{(i)}$. This is checked analogously to  the proof of (1). Proceed to the second claim. Since $(\Nil'_\hbar)_{\K^\times-l.f.}$ is dense in $\Nil'_\hbar$, we see that
$(\Nil'_\hbar)_{\K^\times-l.f.}$ generates  $\Nil'_\hbar$
as a left $\Walg^\wedge_\hbar$-module.
Since $\Walg^\wedge_\hbar$ is a Noetherian algebra, we can choose a finite set of generators
of $\Nil'_\hbar$ inside $(\Nil'_\hbar)_{\K^\times-l.f.}$.
It is easy to see that these elements generate $(\Nil'_\hbar)_{\K^\times-l.f.}$ (compare with the last paragraph
of the proof of Lemma \ref{Lem:1.23}).

Now let us check that the last two functors are quasiinverse
equivalences. First, it is a standard (and pretty straightforward to check) fact that the centralizer of $V$
in $\W_\hbar^\wedge$ is $\K$. So
$\Nil'_\hbar=(\W_\hbar^\wedge(\Nil'_\hbar))^{\ad V}$ for any
$\Nil_\hbar'\in \HC^Q(\Walg_\hbar^\wedge)$. It remains  to verify
that the canonical homomorphism $\W^\wedge_\hbar((\M'_\hbar)^{\ad
V})\rightarrow \M'_\hbar$ is an isomorphism for all $\M'_\hbar\in
\HC^Q(\U^\wedge_\hbar)$. 

Fix a symplectic basis $p_1,q_1,\ldots,p_l,q_l$ in $V$ (with
$\omega(p_i,p_j)=\omega(q_i,q_j)=0, \omega(q_i,p_j)=\delta_{ij}$).

Let $\n$ denote the maximal ideal in $\W^\wedge_\hbar$ (generated by $V$ and $\hbar$).
By Lemma \ref{Lem:1.231}, we have $\n \M'_\hbar\neq
\M'_\hbar$. Choose $m_0\in \M'_\hbar\setminus \n \M'_\hbar$.
We claim that there is $m\in (\M'_\hbar)^{\ad V}$ such that  $m-m_0\in \n\M'_\hbar$.
At first, we show that there is $m_0'$  such that $q_1 m_0'=m_0' q_1$
and $m_0'-m_0\in \n\M'_\hbar$. There is $m^1\in \M'_\hbar$
such that $[q_1,m_0]=\hbar^2 m^1$. Then $[q_1,p_1m^1]=\hbar^2
m_1+p_1[q_1,m^1]$. Set $m_1:=m_0-p_1m^1$. So $[q_1,m_1]=-p_1[q_1,m^1]=\hbar^2 p_1
m^2$, where $m^2$ is a unique element of $\M'_\hbar$ with $\hbar^2 m^2=-[q_1,m^1]$.
Put $m_2=m_0-p_1 m^1-\frac{p_1^2}{2}m^2$, then
$[q_1,m_2]=\hbar^2 p_1^2 m^3$ for some $m^3\in \M'_\hbar$.
Define $m^i, i>3,$ in a similar way. Set $m_0':=m_0-\sum_{i=1}^\infty
\frac{1}{i} p_1^i m^i$. Since $\M'_\hbar$ is complete, $m_0'$ is
well-defined. By the construction of the elements $m^i$ the sequence $[q_1,m_0-\sum_{i=1}^l \frac{1}{i}p_1^i m^i]$
converges to zero, hence  $[q_1,m_0']=0$.

Now we will do a similar procedure with $q_1$ instead of $p_1$ and with $m_0'$ instead of $m_0$.
Set $ m'^1:=\hbar^{-2}[p_1,m_0']$, $m'_1:=m_0'+q_1 m^1$, $m'^2:=\hbar^{-2}[p_1,m'^1]$, etc.
We get the element $m_0''=m_0'+q_1 m'^1+\frac{1}{2} q_1^2
m'^2+\ldots$. By construction, all $m'^i$ commute with $q_1$.  So
$m_0''$ commutes with $p_1$ and $q_1$. Repeating the procedure for $p_2,q_2$ and $m_0''$
we get an element in $\M_\hbar'$ congruent with $m$ modulo $\n \M_\hbar'$ and commuting
with $p_1,p_2,q_1,q_2$. Proceeding in the same way for $i=3,\ldots,l$ we get
a required element $m$.

Consider a natural homomorphism
$\W^\wedge_\hbar\left((\M'_\hbar)^{\ad V}\right)\rightarrow \M'_\hbar$.
From the previous paragraph it follows that the image of this homomorphism
projects surjectively to $\M'_\hbar/ \n \M'_\hbar$. But $\M'_\hbar$ is
finitely generated and one can apply an analog of the Nakayama lemma.
So the   homomorphism in  interest is
surjective.

Analogously to the proof of Lemma 3.4.3 in
\cite{Wquant}, any nonzero $\hbar$-saturated subbimodule in
$\W^\wedge_\hbar\left((\M'_\hbar)^{\ad V}\right)$ has nonzero intersection with the  $\ad
V$-invariants, whence the homomorphism is injective.

Finally, since
$\W^\wedge_\hbar\left((\M'_\hbar)^{\ad V}\right)\cong \M'_\hbar$ and $\M'_{\hbar}$ is a finitely generated (=Noetherian) left $\W^\wedge_\hbar(\Walg^\wedge_\hbar)$-module,
we see that $(\M'_\hbar)^{\ad V}$ is a Noetherian and hence a finitely generated left $\Walg^\wedge_\hbar$-module.
So $(\M'_\hbar)^{\ad V}\in \HC^Q(\Walg^\wedge_\hbar)$.
\end{proof}

By definition,  a functor $\bullet_\dagger: \HC(\U_\hbar)\rightarrow
\HC^Q(\Walg_\hbar)$ is the composition of the completion functor
$\HC(\U_\hbar)\rightarrow \HC^Q(\U^\wedge_\hbar)$, see Lemma \ref{Lem:2.5.2}, and the
equivalence $\HC^Q(\U^\wedge_\hbar)\rightarrow \HC^Q(\Walg_\hbar)$ constructed in Proposition \ref{Prop:3.4.1}.

The following lemma summarizes the properties of the functor $\bullet_\dagger$ we need.

\begin{Lem}\label{Lem:3.5.2}
\begin{enumerate}
\item The functor $\bullet_\dagger$ is exact.
\item The functor $\bullet_\dagger$ is tensor.
\item Let $\M_\hbar\in \HC(\U_\hbar)$. Then
$\M_{\hbar\dagger}/\hbar \M_{\hbar\dagger}$ is naturally identified with the pull-back
of the $\K[\g^*]$-module $\M_\hbar/\hbar \M_\hbar$ to $S$.
\item In particular, $\M_{\hbar\dagger}=0$ if
$\VA(\M_\hbar)\cap \Orb=\varnothing$ and
$\M_{\hbar\dagger}\in \HC_{fin}^Q(\Walg_\hbar)$ provided
$\M_\hbar\in \HC_{\overline{\Orb}}(\U_\hbar)$.
Further, if $\M_\hbar\in \HC_{\overline{\Orb}}(\U_\hbar)$, then $\rank_{\K[\hbar]}\M_{\hbar\dagger}=\mult_{\overline{\Orb}}\M_\hbar$.
\end{enumerate}
\end{Lem}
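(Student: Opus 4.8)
The functor $\bullet_\dagger$ is the composite of the completion functor $\HC(\U_\hbar)\to\HC^Q(\U^\wedge_\hbar)$ and the chain of equivalences $\HC^Q(\U^\wedge_\hbar)\xrightarrow{(\cdot)^{\ad V}}\HC^Q(\Walg^\wedge_\hbar)\xrightarrow{(\cdot)_{\K^\times\text{-l.f.}}}\HC^Q(\Walg_\hbar)$ of Proposition \ref{Prop:3.4.1}. So I would prove each assertion by factoring it through these two stages. For (1), the completion functor is exact by Lemma \ref{Lem:2.5.2}(2) (which rests on Proposition \ref{Lem:3.0.1}(1)), and the equivalences are exact by definition; hence the composite is exact. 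For (2), the completion functor is tensor by Lemma \ref{Lem:2.5.2}(2); for the equivalences one checks $\W^\wedge_\hbar(\Nil'_\hbar)\otimes_{\U^\wedge_\hbar}\W^\wedge_\hbar(\Nil''_\hbar)\cong\W^\wedge_\hbar(\Nil'_\hbar\otimes_{\Walg^\wedge_\hbar}\Nil''_\hbar)$ using $\U^\wedge_\hbar=\W^\wedge_\hbar\widehat\otimes\Walg^\wedge_\hbar$ and the fact that $\W^\wedge_\hbar\widehat\otimes_{\W^\wedge_\hbar}\W^\wedge_\hbar\cong\W^\wedge_\hbar$, i.e. the Weyl-algebra factor contributes nothing new; then pass back to $\HC^Q(\Walg_\hbar)$ via the $\K^\times$-locally finite part, which commutes with tensor products since taking l.f. parts of positively graded completions is compatible with the grading.

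The heart of the lemma is (3), and (4) is a formal consequence of it. The key computation is to identify $\M_{\hbar\dagger}/\hbar\M_{\hbar\dagger}$ with the pull-back of $\M_\hbar/\hbar\M_\hbar$ to $S$. First I would use Proposition \ref{Lem:3.0.1}(4): $\M^\wedge_\hbar/\hbar\M^\wedge_\hbar=(\M_\hbar/\hbar\M_\hbar)^\wedge_\chi$ as a $\K[\g^*]^\wedge_\chi$-module. Next, reducing the decomposition isomorphism $\Phi_\hbar:\U^\wedge_\hbar\xrightarrow{\sim}\W^\wedge_\hbar\widehat\otimes\Walg^\wedge_\hbar$ modulo $\hbar$ gives the classical decomposition $\K[\g^*]^\wedge_\chi\cong\K[V^*]^\wedge_0\widehat\otimes\K[S]^\wedge_\chi$ (this is the Poisson isomorphism $\varphi$ from the proof of Theorem \ref{Thm_decomp}, which is the formal version of the Slodowy-slice transversality statement). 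Under this, $(\M'_\hbar)^{\ad V}$ reduces mod $\hbar$ to the pull-back along the closed embedding $S^\wedge_\chi\hookrightarrow\g^{*\wedge}_\chi$ of the formal slice: the $\ad V$-invariants become, modulo $\hbar$, exactly the fiber of the $\K[V^*]^\wedge_0$-module structure at $0\in V^*$, since for a free Weyl-algebra factor the $\ad V$-invariants of $\W^\wedge_\hbar\widehat\otimes N$ recover $N$, and mod $\hbar$ this degenerates to $N/(V^*\text{-ideal})$. Combining, $\M_{\hbar\dagger}^\wedge/\hbar\M_{\hbar\dagger}^\wedge$ is the completion at $\chi$ of the pull-back of $\M_\hbar/\hbar\M_\hbar$ to $S$; finally, since $\Walg_\hbar$ is positively graded with the ideal of $\chi$ graded, taking the $\K^\times$-l.f. part (Proposition \ref{Prop:3.4.1}) undoes the completion on the $\Walg$-side, and being a graded operation it commutes with reduction mod $\hbar$, yielding the pull-back of $\M_\hbar/\hbar\M_\hbar$ to $S$ itself, as claimed.

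For (4): if $\VA(\M_\hbar)\cap\Orb=\varnothing$ then $\chi\notin\VA(\M_\hbar)$, so $\M^\wedge_\hbar=0$ by Proposition \ref{Lem:3.0.1}(2), hence $\M_{\hbar\dagger}=0$. If $\M_\hbar\in\HC_{\overline\Orb}(\U_\hbar)$, then $\M_\hbar/\hbar\M_\hbar$ is a finitely generated $\K[\g^*]$-module supported on $\overline\Orb$; its pull-back to $S$ is a finitely generated $\K[S]$-module supported on the finite scheme $S\cap\overline\Orb=S\cap\Orb=\{\chi\}$ (Slodowy-slice transversality, $S$ meeting $\Orb$ transversally in the single point $e$ and meeting $\partial\Orb$ not at all), hence finite-dimensional; so by (3), $\M_{\hbar\dagger}/\hbar\M_{\hbar\dagger}$ is finite-dimensional, and since $\M_{\hbar\dagger}$ is $\K[\hbar]$-flat and graded with graded pieces finite-dimensional (Lemma \ref{Lem:2.5.1}), $\M_{\hbar\dagger}$ has finite $\K[\hbar]$-rank, i.e. lies in $\HC^Q_{fin}(\Walg_\hbar)$. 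Moreover $\rank_{\K[\hbar]}\M_{\hbar\dagger}=\dim_\K\M_{\hbar\dagger}/\hbar\M_{\hbar\dagger}=\dim_\K(\text{length of the pull-back of }\M_\hbar/\hbar\M_\hbar\text{ at }\chi)=\mult_{\overline\Orb}\M_\hbar$, the last equality being the definition of multiplicity (Subsection \ref{SUBSECTION_notation}) together with the fact that, $S$ being a transverse slice, the length of a $\K[\g^*]$-module supported on $\overline\Orb$ pulled back to $S$ at $\chi$ equals its generic multiplicity along $\overline\Orb$.

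The main obstacle I expect is the precise bookkeeping in step (3): namely verifying that reducing $\Phi_\hbar$ and the $\ad V$-invariants functor modulo $\hbar$ really produces the pull-back along $S^\wedge_\chi\hookrightarrow\g^{*\wedge}_\chi$ rather than some twisted version, and that the $\K^\times$-l.f. passage is compatible with this reduction. Concretely one must check that for a finitely generated $\W^\wedge_\hbar$-module $N'_\hbar$ (with the relevant equivariant structure) the natural map $(N'_\hbar)^{\ad V}/\hbar(N'_\hbar)^{\ad V}\to(N'_\hbar/\hbar N'_\hbar)^{\ad V}$ is an isomorphism onto the fiber at $0\in V^*$ — this uses the explicit ``Weyl algebra straightening'' argument in the proof of Proposition \ref{Prop:3.4.1} (the inductive construction producing $m\in(\M'_\hbar)^{\ad V}$ with $m-m_0\in\n\M'_\hbar$) applied with $\hbar$ set to $0$, where it degenerates to the statement that $\ad V$-invariants of $\K[V^*]^\wedge_0\widehat\otimes N$ are $N\otimes 1$. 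Everything else is formal consequences of Propositions \ref{Lem:3.0.1} and \ref{Prop:3.4.1} and Lemma \ref{Lem:2.5.2}.
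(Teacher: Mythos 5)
Your proposal is essentially the paper's proof, with the same decomposition of $\bullet_\dagger$ into the completion functor followed by the two equivalences of Proposition \ref{Prop:3.4.1}, and the same key identification in part (3). One small stylistic difference worth noting: the paper's argument for (3) goes directly via the tensor description $\M^\wedge_\hbar \cong \W^\wedge_\hbar\,\widehat\otimes_{\K[[\hbar]]}\,\M^\wedge_{\hbar\dagger}$ (the inverse equivalence, rather than the $\ad V$-invariants one), which makes the reduction mod $\hbar$ immediate and avoids the need to verify that $(\cdot)^{\ad V}$ commutes with $\hbar$-reduction; you flag exactly this as the potential obstacle and then resolve it by invoking the same tensor decomposition, so in effect you converge to the paper's route. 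Your extra elaboration of assertion (4) — the transversality $S\cap\overline\Orb=\{\chi\}$ forcing the pull-back to be finite-dimensional, and the length-at-$\chi$-equals-multiplicity argument — is what the paper compresses into "Assertion (4) follows from (3)", and is correct.
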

\begin{proof}
(1) follows from the exactness of the completion functor. (2) follows from the observation that the
completion functor as well as the equivalences in Proposition \ref{Prop:3.4.1} are tensor.

The construction of $\bullet_\dagger$ implies that we have a $\K^\times$-equivariant isomorphism
$(\M_\hbar/\hbar \M_\hbar)^\wedge_\chi\cong \K[V^*]^\wedge_0\widehat{\otimes} (\M_{\hbar\dagger}/\hbar\M_{\hbar\dagger})^\wedge_\chi$ of $\K[\g^*]^\wedge_\chi=\K[V^*]^\wedge_0\widehat{\otimes}
\K[S]^\wedge_\chi$-modules. This gives rise to a $\K^\times$-equivariant isomorphism
between $(\M_{\hbar\dagger}/\hbar\M_{\hbar\dagger})^\wedge_\chi$ and the pull-back of $(\M_\hbar/\hbar \M_\hbar)^\wedge_\chi$ to $S^\wedge_\chi$. Then $\M_{\hbar\dagger}/\hbar \M_{\hbar\dagger}$ (resp., the pull-back of
$\M_\hbar/\hbar \M_\hbar$ to $S$) are just the spaces of $\K^\times$-l.f. vectors in the modules in the previous sentence, compare with the proof of the first two equivalences in Proposition \ref{Prop:3.4.1}.
This completes the proof of (3).

Assertion (4) follows from (3).
%
\end{proof}

Now let us produce a functor in the opposite direction. For $\M'_\hbar\in \HC(\U^\wedge_\hbar)$
the subspace $(\M'_\hbar)_{\g-l.f.}\subset \M'_\hbar$ is $\K^\times$-stable. Integrate the
$\g$-action on $(\M'_\hbar)_{\g-l.f.}$ to a $G$-action. Define a new $\K^\times$-action
on $(\M'_\hbar)_{\g-l.f.}$ by composing the existing one with $\gamma(t)^{-1}$. Since $\M'_\hbar$ is a $\K^\times$-weakly equivariant $\U_\hbar$-bimodule (with respect to the Kazhdan $\K^\times$-action on $\U_\hbar$),
the new $\K^\times$-action commutes with
$\g$. Set $(\M'_\hbar)_{l.f.}:=[(\M'_\hbar)_{\g-l.f.}]_{\K^\times-l.f.}$.

Let $\HC_{\overline{\Orb}}(\U^\wedge_\hbar)$ denote the full subcategory in $\HC(\U^\wedge_\hbar)$
consisting of all bimodules $\M'_\hbar$ such that $\VA(\M'_\hbar)$ is contained in
(the completion of) $\overline{\Orb}$.

\begin{Lem}\label{Lem:3.5.1}
If $\M'_\hbar\in \HC_{\overline{\Orb}}(\U^\wedge_\hbar)$, then
$(\M'_{\hbar})_{l.f.}\in
\HC_{\overline{\Orb}}(\U_\hbar)$.
\end{Lem}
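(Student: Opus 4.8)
The plan is to verify the three conditions defining $\HC(\U_\hbar)$ for $(\M'_\hbar)_{l.f.}$, the real content being finite generation. First I would record two formal facts. (a) $(\M'_\hbar)_{l.f.}$ is a $\U_\hbar$-sub-bimodule of $\M'_\hbar$: since $\U_\hbar\subset\U^\wedge_\hbar$ is locally finite for the adjoint $\g$-action and the doubled $\K^\times$-action, left and right multiplication by $\U_\hbar$ preserve the $\g\times\K^\times$-locally finite part. (b) $(\M'_\hbar)_{l.f.}$ is $\hbar$-saturated in $\M'_\hbar$: if $\hbar m$ lies in a finite-dimensional $\g\times\K^\times$-submodule $W\subset\M'_\hbar$, then, division by $\hbar$ being equivariant and injective on $\hbar\M'_\hbar$ by $\K[[\hbar]]$-flatness, $\hbar^{-1}(W\cap\hbar\M'_\hbar)$ is again finite-dimensional and $\g\times\K^\times$-stable, so $m\in(\M'_\hbar)_{l.f.}$. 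From (b) the bimodule $(\M'_\hbar)_{l.f.}$ is $\hbar$-torsion-free, hence $\K[\hbar]$-flat (condition (i)); (b) also upgrades $[\U_\hbar,(\M'_\hbar)_{l.f.}]\subset\hbar^2\M'_\hbar$, which holds by the analogue of (iii$^\wedge$), to $[\U_\hbar,(\M'_\hbar)_{l.f.}]\subset\hbar^2(\M'_\hbar)_{l.f.}$ (condition (iii)); and (b) gives an embedding $(\M'_\hbar)_{l.f.}/\hbar(\M'_\hbar)_{l.f.}\hookrightarrow\M'_\hbar/\hbar\M'_\hbar$, so once finite generation is known the hypothesis $\VA(\M'_\hbar)\subset\overline{\Orb}$ forces $\VA((\M'_\hbar)_{l.f.})\subset\overline{\Orb}$, i.e. membership in $\HC_{\overline{\Orb}}(\U_\hbar)$.

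It remains to prove that $(\M'_\hbar)_{l.f.}$ is a finitely generated $\U_\hbar$-bimodule. Since $\U_\hbar$ is positively graded, $(\M'_\hbar)_{l.f.}$ is bounded-below graded (the weights of the relevant $\K^\times$-action are bounded below, the Kazhdan action contracting everything to $\chi$), and $(\M'_\hbar)_{l.f.}$ is $\hbar$-torsion-free, a graded Nakayama argument reduces this to showing that the $\K[\g^*]=\U_\hbar/(\hbar)$-module $\bar N:=(\M'_\hbar)_{l.f.}/\hbar(\M'_\hbar)_{l.f.}$ is finitely generated. By (b), $\bar N$ sits inside $N_{l.f.}$ for $N:=\M'_\hbar/\hbar\M'_\hbar$, a finitely generated $\K[\g^*]^\wedge_\chi$-module with compatible $\g$- and $\K^\times$-actions (the two $\U^\wedge_\hbar$-actions coinciding mod $\hbar$ by (iii$^\wedge$)), supported on $\overline{\Orb}^\wedge_\chi$. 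As $\chi\in\Orb$ is an open point of $\overline{\Orb}$, the unique irreducible component of $\overline{\Orb}$ through $\chi$ is $\overline{\Orb}$ itself, so $N$ is killed by a power of the $G$- and Kazhdan-$\K^\times$-stable ideal $I(\overline{\Orb})\K[\g^*]^\wedge_\chi$. Its $I(\overline{\Orb})$-adic filtration is therefore $\g$-, $\K^\times$- and $\K[\g^*]^\wedge_\chi$-stable with successive quotients finitely generated $\K[\overline{\Orb}]^\wedge_\chi$-modules carrying compatible $\g$- and $\K^\times$-actions; since $(\,\cdot\,)_{l.f.}$ is left exact and $\K[\g^*]$ is Noetherian, it suffices to prove that $N'_{l.f.}$ is finitely generated over $\K[\overline{\Orb}]$ for each such quotient $N'$.

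Here the results of Subsection \ref{SUBSECTION_classical} enter. Since $\chi\in\Orb$ is a smooth point of $\overline{\Orb}$, we have $\K[\overline{\Orb}]^\wedge_\chi=\K[\Orb]^\wedge_\chi=\K[G/H]^\wedge_\chi$ with $H:=G_\chi$, which satisfies (A),(B) by Lemma \ref{Lem:hom1}(3). Equipping $N'$ with a genuine $H$-action compatible with its $\g$-action — restricting the $\g$-action, integrating along the unipotent radical of $H$, and combining with the $Q$-action carried by $\M'_\hbar$ (present in all situations to which we apply the lemma) — makes $N'$ an object of $\HVB^\wedge_{G/H}$, so by Proposition \ref{Prop:hom2} $N'\cong\Fun_2\circ\Fun_1(P)$ with $P:=N'/\m_\chi N'\in\Mod_H$. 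By Proposition \ref{Prop:hom3}(1), $N'_{G-l.f.}=\Gamma(G/H^\circ, G*_{H^\circ}P)$, which by Lemma \ref{Lem:hom1}(1),(2) is finitely generated over $\K[G/H^\circ]$; and $\K[G/H^\circ]$ is module-finite over $\K[G/H]=\K[\overline{\Orb}]$, because $G/H^\circ\to G/H$ is a finite morphism and $\Gamma(G/H^\circ,\mathcal O)$ is finitely generated over $\K[G/H]$ again by Lemma \ref{Lem:hom1}(2). Passing to the further $\K^\times$-locally finite part only shrinks the module, so $N'_{l.f.}$ is finitely generated over $\K[\overline{\Orb}]$, hence over $\K[\g^*]$, and supported on $\overline{\Orb}$. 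Running back through the $I(\overline{\Orb})$-adic filtration yields finite generation of $\bar N$, and hence of $(\M'_\hbar)_{l.f.}$.

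The step I expect to be the main obstacle is upgrading the purely infinitesimal ($\g$-module) data on $N$ and its $I(\overline{\Orb})$-adic subquotients to the genuine $H$-equivariant (homogeneous vector bundle) structure required to invoke Propositions \ref{Prop:hom2} and \ref{Prop:hom3} — this is exactly where $Q$-equivariance is needed and where the identification of $\Fun_4\circ\Fun_1\circ\Fun_3$ with the image of $M_{G-l.f.}^{H/H^\circ}$ is the right tool. A secondary technical point is justifying the graded-Nakayama reduction, i.e. that the locally finite parts occurring are bounded-below graded with finite-dimensional graded components.
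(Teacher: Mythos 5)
Your overall strategy matches the paper's: reduce to finite generation mod $\hbar$, pass to the $I(\overline{\Orb})$-adic filtration on $\M'_\hbar/\hbar\M'_\hbar$, and apply the homogeneous-vector-bundle machinery of Subsection~\ref{SUBSECTION_classical} to each subquotient. But the ``main obstacle'' you flag at the end --- equipping the subquotients with a genuine $H$-action, which you claim requires ``the $Q$-action carried by $\M'_\hbar$'' --- is a genuine gap, not just an expected difficulty, because Lemma~\ref{Lem:3.5.1} is stated for $\M'_\hbar\in\HC_{\overline{\Orb}}(\U^\wedge_\hbar)$, with no $Q$-equivariance assumed. You cannot patch this by invoking equivariance ``in all situations to which we apply the lemma'': the statement is non-equivariant and is used as such in the proof of Proposition~\ref{Prop:3.5.5}(2), where the ambient module is $((\M_\hbar/\hbar\M_\hbar)^\wedge)^{C(e)}_{\g\text{-l.f.}}$-style and no separate $Q$-structure has been granted.

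The paper avoids the issue by never reaching for $H=G_\chi$ at all. It takes $H:=(G_\chi)^\circ$, which satisfies (A),(B) by Lemma~\ref{Lem:hom1}(1),(3). For that choice the required $H$-action on each subquotient $N'$ is automatic: $(G_\chi)^\circ$ is the semidirect product of its unipotent radical and $Q^\circ$, and both factors have their action uniquely recovered from the action of the Lie algebra $\g_\chi$ (which you have). With $H$ connected, Proposition~\ref{Prop:hom3}(1) reads directly $N'_{G-l.f.}=\Gamma(G/H,G*_H P)$, and Lemma~\ref{Lem:hom1}(2) gives finite generation over $\K[G/(G_\chi)^\circ]$, which is finite over $\K[\overline{\Orb}]$. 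So your extra detour through $Q$-equivariance and the full stabilizer is both unjustified and unnecessary.

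A secondary gap, which you also flag but do not close, is the graded-Nakayama reduction: to conclude finite generation of $(\M'_\hbar)_{l.f.}$ from finite generation of $(\M'_\hbar)_{l.f.}/\hbar(\M'_\hbar)_{l.f.}$, you need to know the grading on $(\M'_\hbar)_{l.f.}$ is bounded below. The paper obtains this from separatedness of the $\hbar$-adic filtration on $\M'_\hbar$ (Lemma~\ref{Lem:1.231}): that separatedness passes to the subspace $(\M'_\hbar)_{l.f.}$, and since $\M_\hbar=\underline{\M}_\hbar+\hbar^n\M_\hbar$ for all $n$ with $\underline{\M}_\hbar$ finitely generated and bounded-below graded, the grading on $\M_\hbar$ is forced to be bounded below. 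You should supply this, rather than assume it.
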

\begin{proof}
Set $\M_\hbar:=(\M'_\hbar)_{l.f.}$. Let us check that $\M_\hbar\in \HC(\U_\hbar)$ which amounts to showing that $\M_\hbar$ is finitely generated as a left $\U_\hbar$-module.

Set $M':=\M'_\hbar/\hbar \M'_\hbar, M:=M'_{\g-l.f.}$.
First, we are going to check that $M$ is a finitely generated $\K[\g^*]$-module.
Since $\M'_\hbar\in \HC_{\overline{\Orb}}(\U^\wedge_\hbar)$, we see that $M'$ is annihilated by some power of
the ideal $I(\Orb)$ of $\overline{\Orb}$. Consider the $I(\Orb)$-adic filtration on $M'$. This filtration is finite.
Its quotients are objects in $\HVB^\wedge_{G/(G_\chi)^\circ}$. Indeed, the only thing that we need
to check is that these quotients come equipped with a $(G_\chi)^\circ$-action. But $(G_\chi)^\circ$ is the semidirect product of its unipotent radical and $Q^\circ$. The action of the former is uniquely recovered from the action
of its Lie algebra.

Let $N$ be one of the quotients. As we checked in Subsection \ref{SUBSECTION_classical}, assertion (2) of Lemma \ref{Lem:hom1} and assertion (1) of Proposition \ref{Prop:hom3}, $N_{\g-l.f.}$ is a finitely generated $\K[G/(G_\chi)^\circ]$-module and hence a finitely  generated $\K[\g^*]$-module because  $\K[G/(G_\chi)^\circ]$ is finite  over $\K[\g^*]$. Since the functor of taking $\g$-l.f. sections (on the category of $\g$-modules) is left-exact and all $N_{\g-l.f.}$ are finitely generated, we see that $M$ is finitely generated.

Now let us show that $\M_\hbar/\hbar \M_\hbar$ is a finitely generated
$\K[\g^*]$-module. To do this
consider the exact sequence $0\rightarrow \M'_\hbar\xrightarrow{\hbar}\M'_\hbar\rightarrow M'\rightarrow 0.$
Again, since the functor of taking ($\g$- and $\K^\times$-) l.f. sections is left exact we have the following exact sequence
$0\rightarrow \M_\hbar\xrightarrow{\hbar} \M_\hbar\rightarrow
M'_{l.f.}.$
It follows that the $\K[\g^*]$-module $\M_\hbar/\hbar \M_\hbar$ embeds into $M$ and hence is finitely generated.

Let us proceed with the proof that $\M_\hbar$ is finitely generated.
Lemma \ref{Lem:1.231} implies
that the $\hbar$-adic
filtration on $\M'_\hbar$ is separated. Hence the  same holds for $\M_\hbar$.
We have the grading $\M_\hbar=\bigoplus_{i\in \Z} \M_\hbar^i$. Recall that the multiplication by $\hbar$ increases
the degree by 1.

Pick   generators $m_1,\ldots,m_k$ of the $\K[\g^*]$-module
$\M_\hbar/\hbar\M_\hbar$ of degrees $i_1,\ldots,i_k$.
Lift them to some homogeneous elements
$\widetilde{m}_1,\ldots,\widetilde{m}_k\in \M_\hbar$. We claim that these
elements generate $\M_\hbar$.

Let $\underline{\M}_\hbar$ denote the left submodule of $\M_\hbar$ generated by $\widetilde{m}_i,i=1,\ldots,k$.
It is easy to show by induction that $\M_\hbar=\underline{\M}_\hbar+\hbar^n \M_\hbar$ for any
positive integer $n$. Since the $\hbar$-adic filtration on $\M_\hbar$ is separated, we get that the grading on $\M_\hbar$ is bounded from below. Now the claim in the previous paragraph follows easily.
%
So $\M_\hbar\in \HC(\U_\hbar)$.

It remains to check that
$\VA(\M_\hbar)=\overline{\Orb}$. Since $\M'_\hbar$ is $\K[[\hbar]]$-flat, we see that $m\in \M'_\hbar$
is l.f. if and only if so is $\hbar m$. Equivalently, $\M_\hbar$ is an
$\hbar$-saturated subspace in $\M'_\hbar$.  Therefore $\M_\hbar/\hbar\M_\hbar\subset \M'_\hbar/\hbar\M'_\hbar$
and, in particular, $\M_\hbar/\hbar \M_\hbar$ is annihilated by some power of $I(\Orb)$.
\end{proof}

Similarly to the previous subsection, one can define an action of $C(e)=Q/Q^\circ$ on
$(\M'_\hbar)_{l.f.}$ for any $\M'_\hbar\in\HC^Q_{\overline{\Orb}}(\U^\wedge_\hbar)$.
So we get a functor $\HC^Q_{\overline{\Orb}}(\U^\wedge_\hbar)\rightarrow \HC_{\overline{\Orb}}(\U_\hbar),
\M'_\hbar\mapsto (\M'_\hbar)_{l.f.}^{C(e)}$. Composing this functor
with the equivalence $\HC_{fin}^Q(\Walg_\hbar)\rightarrow \HC^Q_{\overline{\Orb}}(\U^\wedge_\hbar)$
from Proposition \ref{Prop:3.4.1}, we get the functor $\bullet^\dagger: \HC_{fin}^Q(\Walg_\hbar)\rightarrow
\HC_{\overline{\Orb}}(\U_\hbar)$.

Let us describe a relation between $\bullet_\dagger$ and $\bullet^\dagger$.

\begin{Prop}\label{Prop:3.5.5}
\begin{enumerate}
\item $\bullet^\dagger$ is right adjoint to $\bullet_\dagger: \HC_{\overline{\Orb}}(\U_\hbar)\rightarrow
\HC_{fin}^Q(\Walg_\hbar)$. In particular, the functor $\bullet^\dagger$ is left exact.
\item For any $\M\in \HC_{\overline{\Orb}}(\U_\hbar)$ the kernel and the cokernel of the natural morphism $\M_\hbar\rightarrow (\M_{\hbar\dagger})^\dagger$ lie in $\HC_{\partial\Orb}(\U_\hbar)$.
\end{enumerate}
\end{Prop}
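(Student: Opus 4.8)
The plan is to deduce both assertions from the decomposition theorem (Theorem \ref{Thm_decomp}), the equivalences of Proposition \ref{Prop:3.4.1}, and the analysis of homogeneous vector bundles in Subsection \ref{SUBSECTION_classical}. For assertion (1), recall that $\bullet_\dagger$ is the composition of the completion functor $\HC(\U_\hbar)\to\HC^Q(\U_\hbar^\wedge)$ with the equivalence $\HC^Q(\U_\hbar^\wedge)\xrightarrow{\sim}\HC^Q(\Walg_\hbar)$, and $\bullet^\dagger$ on $\HC_{fin}^Q(\Walg_\hbar)$ is the composition of the inverse equivalence $\HC_{fin}^Q(\Walg_\hbar)\xrightarrow{\sim}\HC^Q_{\overline\Orb}(\U_\hbar^\wedge)$ with $\M'_\hbar\mapsto(\M'_\hbar)_{l.f.}^{C(e)}$. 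Since the equivalences of Proposition \ref{Prop:3.4.1} are mutually quasi-inverse, it suffices to show that the functor $\M'_\hbar\mapsto(\M'_\hbar)_{l.f.}^{C(e)}$, from $\HC^Q_{\overline\Orb}(\U_\hbar^\wedge)$ to $\HC_{\overline\Orb}(\U_\hbar)$, is right adjoint to the completion functor $\HC_{\overline\Orb}(\U_\hbar)\to\HC^Q_{\overline\Orb}(\U_\hbar^\wedge)$. First I would construct the unit and counit: the unit $\M_\hbar\to((\M_\hbar^\wedge)_{l.f.}^{C(e)})$ is the composition of the completion map $\M_\hbar\to\M_\hbar^\wedge$ (which lands in the $\g$- and $\K^\times$-locally finite part since every element of $\M_\hbar$ already is, and is $C(e)$-invariant since the $C(e)$-action on $\M_\hbar^\wedge$ was defined to be trivial on the image of $\M_\hbar$), and the counit $((\M'_\hbar)_{l.f.}^{C(e)})^\wedge\to\M'_\hbar$ comes from the inclusion $(\M'_\hbar)_{l.f.}^{C(e)}\hookrightarrow\M'_\hbar$ together with the universal property of completion (using that $\M'_\hbar$ is complete by (ii$^\wedge$)). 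Then I would verify the triangle identities, which is a diagram chase; the only nontrivial input is that a bimodule homomorphism $\M_\hbar\to\M'_\hbar$ from an object of $\HC(\U_\hbar)$ automatically factors through $(\M'_\hbar)_{l.f.}^{C(e)}$ (because elements of $\M_\hbar$ go to locally finite, $C(e)$-fixed elements) and that it extends uniquely to a continuous homomorphism on the completion.

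For assertion (2), I would work after applying the equivalences, so that I must show: for $\M_\hbar\in\HC_{\overline\Orb}(\U_\hbar)$, the natural map $\M_\hbar\to(\M_\hbar^\wedge)_{l.f.}^{C(e)}$ has kernel and cokernel with associated variety inside $\partial\Orb$. The kernel is the set of elements killed by completion, i.e. $\bigcap_k I_{\chi,\hbar}^k\M_\hbar$; by Proposition \ref{Lem:3.0.1}(2) (or rather its proof via the Artin--Rees lemma) an element lies here iff it is supported away from $\chi$, so the kernel is a sub-bimodule supported on $\overline\Orb\setminus\Orb=\partial\Orb$, hence lies in $\HC_{\partial\Orb}(\U_\hbar)$ — modulo the (routine, but worth noting) point that this kernel is $\K[\hbar]$-flat, which follows since it is $\hbar$-saturated in $\M_\hbar$. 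For the cokernel, reduce mod $\hbar$: using that the completion functor is exact and that taking $\g$-, $\K^\times$-locally finite and $C(e)$-invariant parts is left exact, one gets that $(\M_\hbar\to(\M_\hbar^\wedge)_{l.f.}^{C(e)})$ modulo $\hbar$ is, up to kernel/cokernel supported on $\partial\Orb$, the map $M\to (M'_{\g-l.f.})^{C(e)}$ where $M=\M_\hbar/\hbar\M_\hbar$, $M'=\M_\hbar^\wedge/\hbar\M_\hbar^\wedge=M^\wedge_\chi$. Now one filters $M$ (equivalently $M'$) by powers of $I(\Orb)$ and applies Proposition \ref{Prop:hom3}(2): for each successive quotient $N$, which is an object of $\HVB^\wedge_{G/(G_\chi)^\circ}$ coming from $\Fun_2\circ\Fun_1$ applied to the corresponding equivariant sheaf, one has $N_{G-l.f.}^{C(e)}=\Gamma(\Orb,\mathcal F)=\widetilde{\Fun}_4\circ\Fun_1(P)$; comparison with the original (finitely generated, $G$-equivariant) module shows the map $M\to(M'_{\g-l.f.})^{C(e)}$ agrees with $\K[\overline\Orb]$-module to $\Gamma(\Orb,\mathcal F)$ away from $\partial\Orb$, i.e. the cokernel is supported on $\partial\Orb$.

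The main obstacle I anticipate is the bookkeeping in reducing the cokernel statement mod $\hbar$ and then reassembling it: one needs to pass from "kernel/cokernel of the mod-$\hbar$ reduction is supported on $\partial\Orb$" back to "kernel/cokernel of the $\hbar$-family is in $\HC_{\partial\Orb}(\U_\hbar)$", and this requires a short argument with the snake lemma applied to multiplication by $\hbar$, plus the finite-generation facts from Lemma \ref{Lem:3.5.1} to know the cokernel is actually a Harish-Chandra bimodule (finitely generated). A secondary subtlety is keeping track of the two possibly-different $C(e)$-actions discussed in Subsection \ref{SSS_3} and making sure the invariants are taken with respect to the correct one throughout; but this is exactly the setup already fixed before the proposition, so it should not cause real trouble. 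Everything else — the adjunction in (1), the identification of the kernel in (2) — is fairly mechanical given Propositions \ref{Lem:3.0.1}, \ref{Prop:hom3} and \ref{Prop:3.4.1}.
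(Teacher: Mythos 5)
Your proposal is correct and follows essentially the same route as the paper's proof. The paper phrases the adjunction in (1) as a direct bijection between $\Hom$-sets (rather than unit/counit plus triangle identities) and organizes the mod-$\hbar$ reduction in (2) via a general exactness remark for left-exact endofunctors applied to $0\to\M_\hbar\xrightarrow{\hbar}\M_\hbar\to\M_\hbar/\hbar\M_\hbar\to 0$, but the substance you identify --- continuity forcing extension to the completion, images landing in the $\g$-, $\K^\times$-locally finite $C(e)$-invariant part, the kernel via $\hbar$-adic separation (Proposition \ref{Lem:3.0.1}), and the filtration by powers of $I(\Orb)$ combined with Proposition \ref{Prop:hom3}(2) for the cokernel --- is exactly what the paper uses.
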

\begin{proof}
(1):  Using  the equivalence of Proposition \ref{Prop:3.4.1} we
identify $\HC_{fin}^Q(\Walg_\hbar)$ with
$\HC^Q_{\overline{\Orb}}(\U^\wedge_\hbar)$.  The functor $\bullet_\dagger$ becomes the completion
functor $\bullet^\wedge$, while $\bullet^\dagger=(\bullet_{l.f.})^{C(e)}$.

Let $\M_{\hbar}\in \HC_{\overline{\Orb}}(\U_\hbar), \M'_\hbar\in \HC^Q_{\overline{\Orb}}(\U^\wedge_\hbar)$.
Pick a morphism $\varphi: \M_\hbar\rightarrow \left((\M'_\hbar)_{l.f.}\right)^{C(e)}$.
Compose $\varphi$ with the inclusion  $\left((\M'_\hbar)_{l.f.}\right)^{C(e)}\hookrightarrow
\M'_\hbar$. Being a homomorphism of $\U_\hbar$-modules, the corresponding map $\M_\hbar\rightarrow \M'_\hbar$
is continuous in the $I_{\chi,\hbar}$-adic topology. So it uniquely extends  to a continuous morphism
$\psi_\varphi:\M_\hbar^\wedge\rightarrow \M'_\hbar$.

On the other hand, let $\psi:\M_\hbar^\wedge\rightarrow \M'_\hbar$ be a  morphism
in $\HC^Q_{\overline{\Orb}}(\U^\wedge_\hbar)$. We have the natural homomorphism
$\M_\hbar\rightarrow \M^\wedge_\hbar$. Its image consists of $C(e)$-invariant
l.f. vectors. So the image of the composition $\M_\hbar\rightarrow \M'_\hbar$
lies in $\left((\M'_\hbar)_{l.f.}\right)^{C(e)}$. So we get
a morphism $\varphi_\psi: \M_\hbar\rightarrow \left((\M'_\hbar)_{l.f.}\right)^{C(e)}$.

It is easy to see that the assignments $\varphi\mapsto \psi_\varphi$ and $\psi\mapsto \varphi_\psi$
are inverse to each other.

(2): Let $\M^1_\hbar,\M^2_\hbar$ denote the kernel and the cokernel of the natural morphism
$\M_\hbar\rightarrow \left((\M_\hbar^\wedge)_{l.f.}\right)^{C(e)}$.
Since the completion functor is exact, we see that $\M^{1\wedge}_\hbar=0$. By Lemma \ref{Lem:3.5.2},
$\VA(\M^1_\hbar)\subset \partial\Orb$.

Let us prove that $\VA(\M^2_\hbar)\subset \partial\Orb$.

First of all, let us make a general remark.
Let $\mathcal{C}$ be an abelian category and $\mathcal{F}$ be a left exact endo-functor of $\mathcal{C}$
equipped with a functor morphism $\varphi:\operatorname{id}\rightarrow \mathcal{F}$. Consider a short
exact sequence $0\rightarrow M_1\rightarrow M\rightarrow M_2\rightarrow 0$. It gives rise to an exact sequence $\operatorname{coker}\varphi_{M_1}\rightarrow \operatorname{coker}\varphi_M\rightarrow
\operatorname{coker}\varphi_{M_2}$.

We will apply this observation to the category $\mathcal{C}$ of vector spaces equipped with compatible $\g$-,$\K^\times$- and
$Q$-actions and the functors $\mathcal{F}(\bullet)=(\bullet^\wedge)_{l.f.}^{C(e)}$ or $(\bullet^\wedge)_{\g-l.f.}^{C(e)}$.

First, consider the exact sequence $0\rightarrow \M_\hbar\xrightarrow{\hbar\cdot}\M_\hbar\rightarrow \M_\hbar/\hbar \M_\hbar\rightarrow 0$. We see that $\M^2_\hbar/\hbar \M^2_\hbar\hookrightarrow ((\M_\hbar/\hbar\M_\hbar)^\wedge)_{\g-l.f.}^{C(e)}$. So it is enough to show that the last module
is supported on $\overline{\Orb}$. But the $\K[\g^*]$-module
$\M_\hbar/\hbar \M_\hbar$ is supported on $\overline{\Orb}$ and therefore has a finite filtration,
whose successive quotients are objects of $\Coh^G(\overline{\Orb})$.


So it is enough to show that the cokernel of $M\rightarrow ((M^\wedge_\chi)_{\g-l.f.})^{C(e)}$ is supported on $\partial\Orb$ for any
$M\in \Coh^G(\overline{\Orb})$. But, according to assertion (2) of Proposition \ref{Prop:hom3},
$((M^\wedge_\chi)_{\g-l.f.})^{C(e)}=\Gamma(\Orb,M|_{\Orb})$, where $M|_{\Orb}$ stands for the restriction
of $M$ to $\Orb$. By Lemma \ref{Lem:hom1}, $\Gamma(\Orb,M|_{\Orb})$ is a finitely generated $\K[G/G_\chi]$-
(and hence $\K[\overline{\Orb}]$-) module. Repeating the argument in the first paragraph of the proof of
Proposition \ref{Prop:hom2}, we see that the restriction of $\Gamma(\Orb,M|_{\Orb})$ to $\Orb$
coincides with $M|_{\Orb}$. So the cokernel of $M\rightarrow \Gamma(\Orb, M|_{\Orb})$ is supported on $\partial\Orb$.
\end{proof}

\begin{Rem}\label{Rem_up_dag_hbar}
Of course, we can define $\N_\hbar^\dagger$ for an arbitrary (not necessarily
finite dimensional) bimodule $\N_\hbar\in \HC^Q(\Walg_\hbar)$ exactly as above.
Then $\N_\hbar^\dagger$ becomes a $\g$-l.f.  graded
$\K[\hbar]$-flat $\U_\hbar$-bimodule. However, we do not prove that $\N_\hbar^\dagger$
is finitely generated. The functor $\bullet^\dagger$ is still left exact, this can be checked
directly. Also we note that the proof of assertion (1) of Proposition \ref{Prop:3.5.5}
shows that the spaces $\Hom(\M_\hbar, \N_\hbar^\dagger)$ and $\Hom(\M_{\hbar\dagger},\N_\hbar)$
(the first $\Hom$-space is taken in the category of graded $\U_\hbar$-bimodules)
are naturally isomorphic. Using this observation and results of Ginzburg, \cite{Ginzburg},
we will see in Subsection \ref{SUBSECTION_Ginzburg} that $\N^\dagger_\hbar$ is finitely generated.
We do not use this result in the present paper.
\end{Rem}

\subsection{Construction of functors between $\HC_{\overline{\Orb}}(\U), \HC^Q_{fin}(\Walg)$}\label{SUBSECTION_construction2}
Here we construct  functors between the categories
$\HC_{\overline{\Orb}}(\U)$ and $\HC_{fin}^Q(\Walg)$.

Let $\M\in \HC(\U)$. Recall the notion of a good filtration on $\M$ introduced in Subsection \ref{SUBSECTION_completions}. We remark that
if $\F_i\M, \F'_i\M$ are two good filtrations then there are $k,l$ such that
$\F_{i-k}\M\subset \F'_i\M\subset \F_{i+l}\M$ for all $i$.
 For a good filtration
$\F_i\M$ in $\M$ set $\M_\dagger^{\F}:=R_\hbar(\M)_\dagger/(\hbar-1)R_\hbar(\M)_\dagger$
(the superscript $\F$ indicates the dependence of the bimodule on the choice of
$\F$).

Let $\varphi:\M_1\rightarrow \M_2$ be a homomorphism of two
bimodules in $\HC(\U)$. Choose good filtrations $\F^1_i\M_1,\F^2_i\M_2$.
Replacing $\F^2_i\M$  with $\F'^2_i\M^2:=\F^2_{i+k}\M^2$ for sufficiently large
$k$ we get $\varphi(\F^1_i\M_1)\subset \F^2_i\M_2$. So $\varphi$ defines
a homomorphism $\varphi_\hbar: R_\hbar(\M_1)\rightarrow R_\hbar(\M_2)$.
The homomorphism $\varphi_{\hbar\dagger}: R_\hbar(\M_1)_\dagger\rightarrow R_\hbar(\M_2)_\dagger$
gives rise to $\varphi^{\F^1,\F^2}_\dagger: \M_{1\dagger}^{\F^1}
\rightarrow \M_{2\dagger}^{\F_2}$. Clearly, if $\varphi:\M_1\rightarrow \M_2, \psi:\M_2\rightarrow \M_3$
are two homomorphisms and $\F^i$ is a good filtration on $\M_i, i=1,2,3,$ with $\varphi(\F_i^1\M_1)\subset \F^2_i\M_2,
\psi(\F_i^2\M_2)\subset \F_i^3\M_3$, then
\begin{equation}\label{eq:filtrations}
(\psi\circ\varphi)_\dagger^{\F^1,\F^3}=\psi_\dagger^{\F^2,\F^3}\circ \varphi^{\F^1,\F^2}_\dagger.
\end{equation}

For two good filtrations $\F,\F'$ on $\M$ with $\F_i\M\subset \F'_i\M$
consider the morphism $\operatorname{id}^{\F,\F'}_\dagger:\M_\dagger^{\F}\rightarrow \M_\dagger^{\F'}$.
We claim that this is an isomorphism. Indeed, for sufficiently large $k$ we have an inclusion
$\F'_{i}\M\subset \F_{i+k}\M$. Since $\operatorname{id}^{\F,\F_{\bullet+k}}_\dagger$ is the identity
morphism of $\M_\dagger^{\F}=\M_{\dagger}^{\F_{\bullet+k}}$, (\ref{eq:filtrations}) implies that
$\operatorname{id}^{\F,\F'}_\dagger$ has a left inverse. Similarly, it also has a right inverse.
Using the isomorphisms $\operatorname{id}^{\F,\F'}$ and their inverses we can identify all
$\M_\dagger^{\F}$ (the identification does not depend on the choice of intermediate filtrations
thanks to (\ref{eq:filtrations})).
Similarly, we see that
$\varphi_\dagger^{\F_1,\F_2}$ is also independent of the choice of $\F^1,\F^2$ modulo the identifications
we made.

Summarizing, we get a functor $\bullet_\dagger:\HC(\U)\rightarrow \HC^Q(\Walg)$.

Now let us construct a functor $\bullet^\dagger:
\HC_{fin}^Q(\Walg)\rightarrow \HC_{\overline{\Orb}}(\U)$. For a
module $\Nil\in \HC_{fin}^Q(\Walg)$ define a filtration $\F_i\Nil$ by
setting $\F_{-1}\Nil=\{0\}, \F_0\Nil=\Nil$. Since
$\KF_0\Walg=\KF_1\Walg=\K$ ($\K[S]$ has no component of degree 1),
we get $[\KF_i\Walg,\F_j\Nil]\subset \F_{i+j-2}\Nil$.

Put $\Nil^\dagger:=R_\hbar(\Nil)^\dagger/(\hbar-1)$. Then $\Nil^{\dagger}$ comes equipped
with a good filtration. Every
 homomorphism $\varphi:\Nil_1\rightarrow \Nil_2$ gives rise to
$\varphi^\dagger:\Nil_1^\dagger\rightarrow \Nil_2^\dagger$.
The data $\Nil\mapsto \Nil^\dagger, \varphi\mapsto \varphi^\dagger$
constitute a functor.

Interpreting  results of the previous subsection in the present
situation, we get the following proposition.

\begin{Prop}\label{Prop:3.7.1}
\begin{enumerate}
\item The functor $\bullet_\dagger: \HC(\U)\rightarrow \HC^Q(\Walg)$
is exact and maps $\HC_{\overline{\Orb}}(\U)$ to
$\HC_{fin}^Q(\Walg)$ and $\HC_{\partial\Orb}(\U)$ to zero.
\item The functor $\bullet_{\dagger}:\HC(\U)\rightarrow \HC^Q(\Walg)$ is tensor.
\item $\dim\M_\dagger=\mult_{\overline{\Orb}}\M$ for any $\M\in \HC_{\overline{\Orb}}(\U)$.
\item The functor $\bullet^\dagger:\HC^Q_{fin}(\Walg)\rightarrow \HC_{\overline{\Orb}}(\U)$ is right
adjoint to the restriction of $\bullet_\dagger$ to
$\HC_{\overline{\Orb}}(\U)$.
\item The kernel and the cokernel of the natural morphism $\M\rightarrow (\M_\dagger)^\dagger$
lie in $\HC_{\partial \Orb}(\U)$.
\end{enumerate}
\end{Prop}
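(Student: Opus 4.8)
The plan is to obtain all five assertions by transporting, through the Rees construction, the corresponding statements already proved at the level of the homogenized algebras --- Lemmas \ref{Lem:3.5.2} and \ref{Lem:3.5.1} and Proposition \ref{Prop:3.5.5}. Recall from Subsection \ref{SUBSECTION_completions} that $\M\mapsto R_\hbar(\M)$ (for a chosen good filtration) and $\M_\hbar\mapsto\M_\hbar/(\hbar-1)\M_\hbar$ are mutually quasi-inverse equivalences between filtered Harish-Chandra bimodules carrying a good filtration and $\HC(\U_\hbar)$, and similarly for $\Walg$. I will use two facts repeatedly: the specialization functor $\bullet/(\hbar-1)$ is exact on $\HC(\U_\hbar)$ (every object is $\K[\hbar]$-flat, so $\operatorname{Tor}_1^{\K[\hbar]}(\K,-)$ vanishes) and is compatible with tensor products; and for a good filtration $\gr\M=R_\hbar(\M)/\hbar R_\hbar(\M)$, so that $\VA(R_\hbar(\M))=\VA(\M)$ and $\mult_{\overline{\Orb}}R_\hbar(\M)=\mult_{\overline{\Orb}}\M$, both of which are therefore unchanged under specialization.

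For (1), given a short exact sequence $0\to\M_1\to\M_2\to\M_3\to0$ in $\HC(\U)$, I fix a good filtration on $\M_2$ and equip $\M_1,\M_3$ with the induced and the quotient filtrations; these are again good, and then $0\to R_\hbar(\M_1)\to R_\hbar(\M_2)\to R_\hbar(\M_3)\to0$ is exact in $\HC(\U_\hbar)$. Applying the exact functor $\bullet_\dagger$ of Lemma \ref{Lem:3.5.2}(1) and specializing proves exactness; the behaviour on $\HC_{\overline{\Orb}}(\U)$ and $\HC_{\partial\Orb}(\U)$ is immediate from $\VA(R_\hbar(\M))=\VA(\M)$ and Lemma \ref{Lem:3.5.2}(4). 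For (2) I put on $\M_1\otimes_\U\M_2$ the good filtration that is the image of $\sum_{i+j=\bullet}\F_i\M_1\otimes\F_j\M_2$; then $R_\hbar(\M_1\otimes_\U\M_2)$ is the tensor product of $R_\hbar(\M_1)$ and $R_\hbar(\M_2)$ in $\HC(\U_\hbar)$, and (2) follows from Lemma \ref{Lem:3.5.2}(2) together with tensor-compatibility of specialization. For (3), $R_\hbar(\M)_\dagger$ is $\K[\hbar]$-flat of finite rank by Lemma \ref{Lem:3.5.2}(4), hence $\K[\hbar]$-free, so $\dim_\K\M_\dagger=\rank_{\K[\hbar]}R_\hbar(\M)_\dagger=\mult_{\overline{\Orb}}R_\hbar(\M)=\mult_{\overline{\Orb}}\M$.

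For (4) and (5) I first record that $\bullet^\dagger$ does not depend on the bounded filtration used to define it: any bounded filtration on a finite-dimensional $\Walg$-bimodule yields an object of $\HC^Q_{fin}(\Walg_\hbar)$ after applying $R_\hbar$, and the filtration-independence argument of Subsection \ref{SUBSECTION_construction2} (comparing the maps $\operatorname{id}^{\F,\F'}$) applies verbatim. Since $R_\hbar(\Nil)^\dagger$ is graded and $\K[\hbar]$-free, picking a homogeneous $\K[\hbar]$-basis identifies $R_\hbar(\Nil^\dagger)$, for the good filtration that $\Nil^\dagger$ carries, with $R_\hbar(\Nil)^\dagger$ itself, and likewise $R_\hbar(\M_\dagger,\F^\circ)\cong R_\hbar(\M)_\dagger$ for the filtration $\F^\circ$ on $\M_\dagger$ induced by the grading; together with the previous remark this identifies $(\M_\dagger)^\dagger$ with $\bigl(R_\hbar(\M)_\dagger\bigr)^\dagger/(\hbar-1)$. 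Now for (4): an unfiltered $\U$-bimodule homomorphism between good-filtered Harish-Chandra bimodules automatically respects the filtrations up to a degree shift, hence corresponds to a graded morphism $R_\hbar(\M)(-k)\to R_\hbar(\Nil)^\dagger$; the graded-$\Hom$ adjunction of Remark \ref{Rem_up_dag_hbar} (resting on Proposition \ref{Prop:3.5.5}(1)) turns this into a graded morphism $R_\hbar(\M)_\dagger(-k)\to R_\hbar(\Nil)$, and specializing --- where the shift disappears --- gives a bijection $\Hom_{\HC(\U)}(\M,\Nil^\dagger)\cong\Hom_{\HC^Q(\Walg)}(\M_\dagger,\Nil)$ natural in both variables. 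For (5), the natural morphism $\M\to(\M_\dagger)^\dagger$ is the specialization of the adjunction unit $R_\hbar(\M)\to\bigl(R_\hbar(\M)_\dagger\bigr)^\dagger$, whose kernel and cokernel lie in $\HC_{\partial\Orb}(\U_\hbar)$ by Proposition \ref{Prop:3.5.5}(2); exactness of specialization and invariance of $\VA$ under it then put the kernel and cokernel of $\M\to(\M_\dagger)^\dagger$ in $\HC_{\partial\Orb}(\U)$.

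The bulk of the actual work sits in the filtration bookkeeping of the last two parts: verifying that $\bullet^\dagger$ is filtration-independent, identifying $R_\hbar$ of $\Nil^\dagger$ and of $(\M_\dagger,\F^\circ)$ with the graded bimodules produced at the $\hbar$-level, and, for the adjunction, matching the unfiltered $\Hom$-spaces in $\HC(\U)$ and $\HC^Q(\Walg)$ with the graded ones over $\U_\hbar,\Walg_\hbar$ while carrying and then discarding the degree shifts. Everything else is a straightforward transcription of the $\hbar$-level results.
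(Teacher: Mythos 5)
Your proposal is correct and follows essentially the same route as the paper: all five assertions are transported from Lemma \ref{Lem:3.5.2} and Proposition \ref{Prop:3.5.5} via the Rees/specialization equivalence, with the main work being filtration bookkeeping. The only minor variations are cosmetic --- for (2) the paper instead reduces directly to the claim that specialization at $\hbar=1$ commutes with tensor products (rather than identifying $R_\hbar$ of a tensor product with the tensor product in $\HC(\U_\hbar)$), and for (4) the paper simply shifts the good filtration on $\N^\dagger$ so that $\varphi$ is strictly filtration-preserving, whereas you carry an explicit degree shift $(-k)$ and discard it after specialization; both devices accomplish the same thing.
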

\begin{proof}
Assertions (1) and (3) follow directly from Lemma \ref{Lem:3.5.2}. Also using Lemma \ref{Lem:3.5.2} one can
reduce assertion  (2) to the following claim:

Let $\A$ be a filtered algebra and $M,N$ be filtered $\A$-bimodules. Then the natural homomorphism
$R_\hbar(M)\otimes_{R_\hbar(\A)}R_\hbar(N)/(\hbar-1)(R_\hbar(M)\otimes_{R_\hbar(\A)}R_\hbar(N))\rightarrow M\otimes_A N$
is an isomorphism.

The claim follows from the observation that $R_\hbar(M)\otimes_{R_\hbar(\A)}R_\hbar(N)/(\hbar-1)(R_\hbar(M)\otimes_{R_\hbar(\A)}R_\hbar(N))$ is naturally
identified with $\A\otimes_{R_\hbar(\A)}R_\hbar(M)\otimes_{R_\hbar(\A)}R_\hbar(N)\otimes_{R_\hbar(\A)}\A$.
But the latter is nothing else but $M\otimes_{R_\hbar(\A)}N=M\otimes_\A N$.

Let us derive assertion (4) from the corresponding assertion of Proposition \ref{Prop:3.5.5}.
Pick $\varphi\in \Hom(\M,\N^\dagger)$, where $\M\in \HC_{\overline{\Orb}}(\U), \N\in \HC^Q_{fin}(\Walg)$.
There are good filtrations $\F_i\M, \F'_i\N$ on $\M,\N$ respectively such that $\varphi(\F_i\M)\subset \F'_i\N^\dagger$
(here in the right hand side $\F'_i\N^\dagger$ is a good filtration arising from $\F'_i\N$, see the construction
above). So $\varphi$ gives rise to a morphism $\varphi_\hbar: R_\hbar(\M)\rightarrow R_\hbar(\N^\dagger)=(R_\hbar(\N))^\dagger$ in $\HC(\U_\hbar)$. The corresponding homomorphism
$\psi_\hbar: R_\hbar(\M)_\dagger\rightarrow R_\hbar(\N)$ gives rise to $\psi: \M_\dagger\rightarrow \N$.
Similarly to the construction of the functors above, the morphism $\psi$ does not depend on the choice of good filtrations. So we get a natural map $\Hom(\M,\N^\dagger)\rightarrow \Hom(\M_\dagger,\N)$.  The inverse map
is constructed in a similar way.

Assertion (5) follows now directly from assertion (2) of Proposition \ref{Prop:3.5.5}.
\end{proof}

\begin{Rem}\label{Rem:3.5.6}
It follows easily from the construction that for $\J\in
\Id_\Orb(\U)$ the definition of $\J_\dagger$ given here is the same
as in Subsection \ref{SUBSECTION_corr_ideals}.
\end{Rem}

\begin{Rem}\label{Rem_up_dag}
One can define $\N^\dagger$ for an arbitrary object $\N\in \HC^Q(\Walg)$ using the same procedure as above
and Remark \ref{Rem_up_dag_hbar}. We still have the natural isomorphism $\Hom(\M, \N^\dagger)\cong
\Hom(\M_\dagger,\N)$ and hence the natural morphism $\M\rightarrow (\M_\dagger)^\dagger$.
The functor $\bullet^\dagger$ is left exact.

In particular, for an ideal $\I\subset \Walg$ we have defined the ideal $\I^\dagger\subset \U$,
see Subsection \ref{SUBSECTION_corr_ideals}. One can show that the {\it functor} $\bullet^\dagger$
maps $\Walg$ to $\U$. And so for a $C(e)$-stable ideal $\I$ both definitions of $\I^\dagger$
agree. We are not going to use this. Instead we observe the following. By the definition of $\I^\dagger\subset \U$
given in Subsection \ref{SUBSECTION_corr_ideals}, $\I$ is nothing else but the preimage of
$\I^\dagger\subset \Walg^\dagger$ under the natural homomorphism $\U\rightarrow \Walg^\dagger=(\U_\dagger)^\dagger$
provided $\I$ is $Q$-stable.
In particular, it follows that $\I^\dagger\subset \U$ is the kernel of the natural map $\U\rightarrow \Walg^\dagger/\I^\dagger\hookrightarrow (\Walg/\I)^\dagger$.
\end{Rem}

\begin{Rem}\label{Rem:4.5.1}
We can use the previous remark and results of Borho and Kraft, \cite{BoKr}, to prove the
Joseph irreducibility theorem, \cite{Joseph}, see the discussion at the end of Subsection \ref{SUBSECTION_corr_ideals}.
Other proofs can be found
in \cite{Vogan},\cite{Ginzburg_irr}.

If $\I$ is a $C(e)$-stable ideal of finite codimension in $\Walg$, then, thanks
to the previous remark, $\VA(\U/\I^\dagger)\subset \VA((\Walg/\I)^\dagger)\subset\overline{\Orb}$.
On the other hand, $(\I^\dagger)_\dagger\subset \I$  by assertion (ii) of Theorem \ref{Thm:5}. So $\VA(\U/\I^\dagger)=\overline{\Orb}$.

Now we are ready to rederive the Joseph irreducibility theorem.
Suppose $\J$ is a primitive ideal in $\U$ such that $\overline{\Orb}$ is an irreducible component
of $\VA(\U/\J)$ of maximal dimension. Therefore $\J_\dagger$ has finite codimension in $\Walg$. By assertion (ii)
of Theorem \ref{Thm:5}, $\J\subset (\J_\dagger)^\dagger$.
Now  Corollar 3.6, \cite{BoKr},
implies that $\J=(\J_\dagger)^{\dagger}$. So $\VA(\U/\J)=\overline{\Orb}$.
\end{Rem}

To finish the subsection we will prove a straightforward generalization of
\cite{Wquant}, Proposition 3.4.6, which was conjectured by McGovern
in \cite{McGovern}.

\begin{Prop}\label{Prop:3.7.3}
Let $\A$ be a Dixmier algebra (i.e. an algebra over $\U$ that is a
Harish-Chandra bimodule with respect to the  left and right multiplications by elements of
$\U$) such that $\VA(\A)=\overline{\O}$. Suppose, in addition, that $\A$
is prime. Then $\Goldie(\A)\leqslant \sqrt{\mult_{\overline{\Orb}}(\A)}$.
\end{Prop}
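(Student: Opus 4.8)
The plan is to reduce the statement about the Dixmier algebra $\A$ to the known inequality for finite-dimensional $\Walg$-bimodules, namely the estimate $\Goldie(\Walg/\I)=(\dim\Walg/\I)^{1/2}$ from Theorem \ref{Thm:5}(vii) together with assertion (3) of Proposition \ref{Prop:3.7.1}. The key observation is that $\A_\dagger$ is not merely an object of $\HC_{fin}^Q(\Walg)$ but, because $\bullet_\dagger$ is a tensor functor (Proposition \ref{Prop:3.7.1}(2)) and $\A$ carries an associative algebra structure compatible with the $\U$-bimodule structure, the finite-dimensional vector space $\A_\dagger$ inherits an associative algebra structure. Concretely, the multiplication $\A\otimes_\U\A\to\A$ is a morphism in $\HC_{\overline{\Orb}}(\U)$, and applying $\bullet_\dagger$ and using the tensor property gives a multiplication $\A_\dagger\otimes_\Walg\A_\dagger\to\A_\dagger$; together with the unit $\Walg\to\A$ one gets a unital algebra $\A_\dagger$ that is a quotient of $\Walg$, i.e. $\A_\dagger=\Walg/\I$ for some $\I\in\Id_{fin}(\Walg)$. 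By Proposition \ref{Prop:3.7.1}(3), $\dim\A_\dagger=\mult_{\overline{\Orb}}(\A)$.

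Next I would show $\Goldie(\A)\leqslant\Goldie(\A_\dagger)$. Here I would mimic the argument establishing Theorem \ref{Thm:5}(vii) in \cite{Wquant}: pass to the quantum/completed picture, where $\A^\wedge_\hbar\cong\W^\wedge_\hbar\widehat\otimes_{\K[[\hbar]]}(\A_\dagger)^\wedge_\hbar$ via the decomposition isomorphism $\Phi_\hbar$, so that after inverting $\hbar$ and specializing, the localization of $\A$ embeds into a ring Morita-equivalent (via the Weyl-algebra factor, which does not change Goldie rank) to the localization of $\A_\dagger$. Since $\A$ is prime, its Goldie rank is computed from its Goldie quotient ring, and the embedding into (a matrix-type amplification of) the Goldie quotient ring of $\A_\dagger$ forces $\Goldie(\A)\leqslant\Goldie(\A_\dagger)$. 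Finally, $\A_\dagger=\Walg/\I$ is a finite-dimensional algebra, so $\Goldie(\A_\dagger)\leqslant\sqrt{\dim_\K\A_\dagger}$ — this is the elementary fact that a finite-dimensional algebra of dimension $d$ has Goldie rank at most $\sqrt d$ (the semisimple quotient is a product of matrix algebras $\prod\mathrm{Mat}_{n_i}$, whose Goldie rank $\sum n_i$ is bounded by $(\sum n_i^2)^{1/2}\leqslant d^{1/2}$). Combining, $\Goldie(\A)\leqslant\sqrt{\dim\A_\dagger}=\sqrt{\mult_{\overline{\Orb}}(\A)}$.

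The main obstacle, I expect, is the inequality $\Goldie(\A)\leqslant\Goldie(\A_\dagger)$, i.e. making precise that the functor $\bullet_\dagger$ — which on the level of ideals is $\J\mapsto\J_\dagger$ — does not decrease Goldie rank for a prime Dixmier algebra. One has to be careful that $\A$ need not equal $(\A_\dagger)^\dagger$ on the nose (only up to bimodules supported on $\partial\Orb$, by Proposition \ref{Prop:3.7.1}(5)), so the argument should go through the completed algebras directly rather than through the unit/counit maps: the natural map $\A\to\A^\wedge_\hbar/(\hbar-1)$-type completion is injective modulo a sub-bimodule supported on $\partial\Orb$, which is zero since $\A$ is prime with $\VA(\A)=\overline{\Orb}$ and $\partial\Orb\subsetneq\overline{\Orb}$; thus $\A$ embeds into its completion, which factors as a Weyl-algebra completion tensored with $(\A_\dagger)$'s completion, and Goldie rank is read off from there exactly as in the proof of Theorem \ref{Thm:5}(vii). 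Everything else is routine manipulation with the functors and tensor structures already in place, plus the elementary linear-algebra bound on Goldie rank of a finite-dimensional algebra.
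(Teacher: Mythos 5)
Your overall strategy — use the tensor structure to make $\A_\dagger$ a finite-dimensional algebra, then control $\Goldie(\A)$ by the Goldie rank of a finite-dimensional object — is the right shape, but there is a genuine gap at the last step. The "elementary linear-algebra bound" you invoke, that a finite-dimensional algebra of dimension $d$ has Goldie rank at most $\sqrt d$, is false: the Goldie (uniform) rank of $\prod_i\mathrm{Mat}_{n_i}(\K)$ is $\sum_i n_i$, which exceeds $(\sum_i n_i^2)^{1/2}$ as soon as there is more than one factor (e.g. $\K\times\K$ has dimension $2$ and Goldie rank $2$). The inequality $\Goldie\leqslant\sqrt{\dim}$ holds precisely for \emph{prime} finite-dimensional algebras (matrix algebras), and this is exactly the issue: $\A_\dagger$ is in general not prime even though $\A$ is, because $\bullet_\dagger$ is only exact, not prime-preserving. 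So the chain $\Goldie(\A)\leqslant\Goldie(\A_\dagger)\leqslant\sqrt{\dim\A_\dagger}$ breaks at the second inequality.

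This is precisely what the paper's proof is engineered to avoid. Rather than comparing with $\A_\dagger$ itself, it chooses a minimal prime $\I$ of $\A_\dagger$, sets $\J:=\psi^{-1}(\B\otimes\I)$ under the embedding $\psi:\A\to\B\otimes\A_\dagger$, and uses the primality of $\A$ together with the Borho--Kraft result to show $\J=\{0\}$: otherwise $\A/\J$ would be supported on $\partial\O$, giving $\A_\dagger=\J_\dagger\subset\I$, a contradiction. One then has an embedding $\A\hookrightarrow\B\otimes(\A_\dagger/\I)$ with $\A_\dagger/\I$ prime, and $\Goldie(\A)\leqslant\Goldie(\A_\dagger/\I)\leqslant\sqrt{\dim(\A_\dagger/\I)}\leqslant\sqrt{\dim\A_\dagger}$. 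Without passing to a prime quotient of $\A_\dagger$, your argument cannot close. A secondary slip: the unit map $\Walg\to\A_\dagger$ need not be surjective (a Dixmier algebra is an algebra \emph{over} $\U$, not a quotient of it), so $\A_\dagger$ is not of the form $\Walg/\I$; the paper's proof never needs this, only that $\A_\dagger$ is a finite-dimensional algebra.
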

\begin{proof}
First of all, let us show that $\A$ admits a good filtration that is also an algebra filtration.
Choose an $\ad \g$-stable subspace $\A_2\subset \A$ that contains the image of $\g$ in $\A$
and generates $\A$ as a left  $\U$-module. Set $\A_0:=\A_1=\K, \A_k:=\F_{k-2}\U\cdot\A_2$
for $k\geqslant 2$. The filtration $\A_k$ on $\A$ is a good filtration but not an algebra filtration, in general.
But there is $m\geqslant 0$ with $\A_2^2\subset \A_{4+m}$.  Set $\F_0\A=\ldots =\F_{m+1}\A:=\K, \F_k\A:=\A_{k-m}$
for $k\geqslant m+2$. Then $\F_k\A$ is a good filtration, and $\F_k\A\F_l\A=\F_k\A$ if $l\leqslant m+1$ and $\F_{k}\A\F_{l}\A=\F_{k-m-2}\U \A_2\F_{l-m-2}\U \A_2\subset \F_{k+l-2m-4}\U\A_{4+m}=\A_{k+l-m}=\F_{k+l}\A$.
So $\F_\bullet\A$ is a good algebra filtration.

From the construction we see that $\A_\dagger, (\A_\dagger)^\dagger$ have  natural algebra structures
and the natural morphism $\A\rightarrow (\A_\dagger)^\dagger$ is a homomorphism of algebras.
Analogously to the proof of Proposition 3.4.6 in \cite{Wquant}, we
have a homomorphism $\psi:\A\rightarrow \B\otimes \A_\dagger$, where
$\B$ is a certain completely prime (=without zero divisors) algebra.  Let us recall the construction of
$\B$ and $\psi$.

By definition, $\B:=(\W_\hbar^\wedge)_{\K^\times-l.f}/(\hbar-1)$. Set $\A_\hbar:=R_\hbar(\A)$.
Since $\A_{\hbar\dagger}$ has finite rank over $\K[\hbar]$ we see that $(\A_\hbar^\wedge)_{\K^\times-l.f.}=
(\W_\hbar^\wedge\widehat{\otimes}_{\K[[\hbar]]}\A_{\hbar\dagger}^\wedge)_{\K^\times-l.f.}=
(\W_{\hbar}^\wedge)_{\K^\times-l.f.}\otimes_{\K[\hbar]}\A_{\hbar\dagger}$ and so $(\A_\hbar^{\wedge})_{\K^\times-l.f.}/
(\hbar-1)(\A_\hbar^{\wedge})_{\K^\times-l.f.}=\B\otimes \A_\dagger$. Now $\psi$ is obtained from the natural
homomorphism $\A_\hbar\rightarrow (\A_\hbar^\wedge)_{\K^\times-l.f.}$.

Similarly to Subsection \ref{SUBSECTION_corr_ideals}, for an ideal $\I\subset \A_\dagger$ we can define the ideal
$\I^\dagger\subset\A$.  The construction implies  $\I^\dagger=\psi^{-1}(\B\otimes\I)$ and
$(\I^\dagger)_\dagger\subset \I$.

Let $\I$ be a minimal
prime ideal of $0$ in $\A_\dagger$. Set $\J:=\psi^{-1}(\B\otimes\I)$ in $\A$.
We are going to show that $\J=\{0\}$.

Assume the converse.  Since the algebra $\A$ is prime, we can apply results of Borho and Kraft,
\cite{BoKr}, to see that $\A/\J$ is supported on $\partial{\Orb}$,
equivalently, $\A_\dagger=\J_\dagger$. However
$\J_\dagger\subset \I$, contradiction. So we have an embedding $\A\hookrightarrow \B\otimes (\A_\dagger/\I)$.

Now, similarly to
\cite{Wquant}, $\Goldie(\A)\leqslant
\Goldie(\B\otimes(\A_\dagger/\I))=\Goldie(\A_\dagger/\I)
\leqslant\sqrt{\dim\A_\dagger}=\sqrt{\mult_{\Orb}(\A)}$.
\end{proof}

\subsection{Comparison with Ginzburg's construction}\label{SUBSECTION_Ginzburg}
Ginzburg, \cite{Ginzburg}, defined a functor
$\HC_\Orb(\U)\rightarrow \HC_{fin}^Q(\Walg)$ in the following
way: $\M\mapsto (\M/\M\m_{\chi})^{\ad \m}$ (to see the action of
$Q$ one needs to prove that the natural homomorphism
$(\M/\g_{\leqslant -2,\chi})^{\ad\g_{\leqslant
-1}}\rightarrow(\M/\m_\chi\M)^{\ad \m}$ is an isomorphism, this can
be done  similarly to \cite{GG}, Subsection 5.5). Below in this subsection we will check that
 Ginzburg's functor coincides with ours. In particular, on the language of the
quantum Hamiltonian reduction one has
$\J_\dagger=(\J/\J\m_\chi)^{\ad\m}$.

Recall the algebras $\U^\heartsuit:=(\U_\hbar^\wedge)_{\K^\times-l.f.}/(\hbar-1),
\W(\Walg)^\heartsuit:=(\W^\wedge_\hbar(\Walg^\wedge_\hbar))_{\K^\times-l.f.}/(\hbar-1)$ introduced
in \cite{Wquant}. Let $\Phi:\U^\heartsuit\rightarrow \W(\Walg)^\heartsuit$ be the isomorphism induced by $\Phi_\hbar$.
Now let $\M$ be a Harish-Chandra $\U$-bimodule. Choosing a good filtration on $\M$, we get a Harish-Chandra $\U_\hbar$-bimodule $\M_\hbar=R_\hbar(\M)$.
Set $\M^\heartsuit:=(\M^\wedge_\hbar)_{\K^\times-l.f.}/(\hbar-1)(\M^\wedge_\hbar)_{\K^\times-l.f.}$.
Analogously to the previous subsection,
the $\U^\heartsuit$-bimodule $\M^\heartsuit$ does not depend on the choice of a filtration on $\M$.
Moreover, from the construction of $\M_\dagger$ it follows that $\M^\heartsuit= \W(\M_\dagger)^\heartsuit(:=\W_\hbar^\wedge(\M_{\hbar\dagger})_{\K^\times-l.f.}/
(\hbar-1)\W_\hbar^\wedge(\M_{\hbar\dagger})_{\K^\times-l.f.})$.
So the $\Walg$-bimodule $\M_\dagger$ is nothing else but $(\M^\heartsuit)^{\ad V}\cong (\M^\heartsuit/\M^\heartsuit\m)^{\ad\m}$, where we consider $\m$ as a lagrangian subspace in $V$.

The embedding $\U\hookrightarrow \U^\heartsuit$ gives rise to a map $\U/\U\m_\chi\rightarrow \U^\heartsuit/\U^\heartsuit \m_\chi$. As we have seen in \cite{Wquant}, the paragraph preceding Remark 3.2.7, $\U^\heartsuit=\U+\U^\heartsuit\m_\chi$. Also it is clear from the construction there that
$\U\cap \U^\heartsuit\m_\chi=\U\m_\chi$ (this was used implicitly in the proof of Corollary 3.3.3 in \cite{Wquant}).
So the natural homomorphism $(\U/\U\m_\chi)^{\ad\m_\chi}\rightarrow (\U^\heartsuit/\U^\heartsuit\m_\chi)^{\ad\m_\chi}=\Walg$ is an isomorphism of filtered algebras.

So we have a functorial homomorphism $\iota:(\M/\M\m_\chi)^{\ad\m_\chi}\rightarrow (\M^\heartsuit/\M^\heartsuit\m_\chi)^{\ad\m_\chi}=\M_{\dagger}$ of $\Walg$-modules.
This homomorphism preserves  natural (Kazhdan) fintrations on the bimodules.
Let us check that $\iota$ is an isomorphism. By assertion (3) of Lemma \ref{Lem:3.5.2}  and part (i) of Theorem 4.1.4 in \cite{Ginzburg}, we have $\gr (\M/\M\m_\chi)^{\ad\m}\cong \gr\M|_{S}\cong \gr\M_\dagger$. Moreover,
the corresponding isomorphism $\gr (\M/\M\chi)^{\ad\m}\rightarrow\gr\M_\dagger$ coincides with
$\gr\iota$. Since the gradings on both modules are bounded from below, we see that
$\iota$ is an isomorphism.

\begin{Rem}
As Ginzburg proved in \cite{Ginzburg}, Theorem 4.2.2, there is a right adjoint functor to
$\bullet_\dagger: \HC(\U)\rightarrow \HC(\Walg)$ (he did not considered $Q$-equivariant
structures). We can consider the functor $\bullet^{\widetilde{\dagger}}$ from $\HC(\Walg)$
to the category of $\g$-l.f. $\U$-bimodules corresponding to taking l.f.
sections on the homogeneous level (without taking $C(e)$-invariants).
Similarly to Remark \ref{Rem_up_dag}, we see that
$\Hom(\M, \N^{\widetilde{\dagger}})=\Hom(\M_\dagger, \N)$. Thanks to Ginzburg's result,
the image of $\bullet^{\widetilde{\dagger}}$ lies in $\HC(\U)$. In particular, we see
that the image of $\bullet^\dagger$ lies in $\HC(\U)$. We are not going to use this result below.
\end{Rem}

\section{Proofs of Theorems \ref{Thm:1},\ref{Thm:2}}\label{SECTION_proofs}
\subsection{Surjectivity theorem}\label{SUBSECTION_Surjectivity}
The following theorem will be used to  complete the proof of Theorem \ref{Thm:1}
and  implies the most non-trivial part of Theorem \ref{Thm:2},
assertion 5.

\begin{Thm}\label{Thm:surjectivity}
Let $\M\in \HC(\U)$ and let $\N\subset \M_\dagger$ be a $Q$-stable subbimodule
of finite codimension. Let $\N^{\ddag}$ stand for the preimage of $\N^\dagger\subset
(\M_\dagger)^\dagger$ in $\M$ (see Remark \ref{Rem_up_dag}).
Then $(\N^\ddag)_\dagger=\N$.
\end{Thm}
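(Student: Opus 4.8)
The plan is to pass to the level of quantum (homogenized) algebras and completions, where the functors $\bullet_\dagger, \bullet^\dagger$ become, respectively, the completion functor $\bullet^\wedge$ and the functor $\M'_\hbar \mapsto (\M'_\hbar)_{l.f.}^{C(e)}$ (via the equivalences of Proposition \ref{Prop:3.4.1}). So choose a good filtration on $\M$ and set $\M_\hbar := R_\hbar(\M)$; then $\M_{\hbar\dagger}$ is the bimodule corresponding to $\M_\hbar^\wedge$ under the equivalence $\HC^Q(\U_\hbar^\wedge) \cong \HC^Q(\Walg_\hbar)$. A $Q$-stable subbimodule $\N \subset \M_\dagger$ of finite codimension lifts to a graded $Q$-stable subbimodule $\N_\hbar \subset \M_{\hbar\dagger}$ with $\M_{\hbar\dagger}/\N_\hbar$ of finite rank over $\K[\hbar]$; its completion gives a $Q$-,$\K^\times$-,$\g$-stable subbimodule $\N'_\hbar := \W_\hbar^\wedge(\N_\hbar^\wedge) \subset \M_\hbar^\wedge$ whose quotient is supported on $\O$. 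The first task is to produce a subbimodule $\widetilde{\N}_\hbar \subset \M_\hbar$ whose completion is exactly $\N'_\hbar$; this is the content of the ``surjectivity'' property described in \S\ref{SSS_4}, and one takes $\N^\ddag := \widetilde{\N}_\hbar/(\hbar-1)\widetilde{\N}_\hbar$, checking it agrees with the definition via Remark \ref{Rem_up_dag}.

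First I would establish the surjectivity property itself, following the two-part strategy of \S\ref{SSS_4}. We may assume $\M_\hbar \hookrightarrow \M_\hbar^\wedge$. Part one: handle the case where $\N'_\hbar \supset \hbar^k \M_\hbar^\wedge$ for some $k$, so that $\M_\hbar^\wedge/\N'_\hbar$ has a finite filtration with successive quotients finitely generated $\K[\Orb]^\wedge_\chi$-modules (carrying compatible $G$-, $\K^\times$-actions). Here one uses the description of such modules as restrictions of $G$-equivariant coherent sheaves on $\Orb$ (Proposition \ref{Prop:hom2}) and the fact that $\g$-l.f. global sections form a finitely generated $\K[\overline{\Orb}]$-module (Lemma \ref{Lem:hom1}(2) together with Proposition \ref{Prop:hom3}); intersecting $\N'_\hbar$ with $\M_\hbar$ and using exactness of completion (Proposition \ref{Lem:3.0.1}(1)) one shows $\N_{\hbar} := \N'_\hbar \cap \M_\hbar$ has completion $\N'_\hbar$. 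Part two: the general ($\K[[\hbar]]$-flat quotient) case. Set $\N'_{\hbar,k} := \N'_\hbar + \hbar^{k+1}\M_\hbar^\wedge$ and $\N_{\hbar,k} := \N'_{\hbar,k} \cap \M_\hbar$; part one applies to each $\N'_{\hbar,k}$, so $\N_{\hbar,k}$ generates $\N'_{\hbar,k}$. One then needs the intersection $\bigcap_k \N_{\hbar,k}$ to be large enough — concretely, that it is a subbimodule generating $\N'_\hbar$ — which is where Lemma \ref{Lem:4.3.2} (the stabilization lemma) enters. Taking $\widetilde{\N}_\hbar := \bigcap_k \N_{\hbar,k}$ finishes this step.

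Once surjectivity is in hand, the identity $(\N^\ddag)_\dagger = \N$ follows formally: applying $\bullet_\dagger$ (i.e. completing) to $\widetilde{\N}_\hbar \subset \M_\hbar$ recovers $\N'_\hbar \subset \M_\hbar^\wedge$ by construction, and then passing back through the equivalence $\HC^Q(\U_\hbar^\wedge) \cong \HC^Q(\Walg_\hbar)$ (which sends $\N'_\hbar = \W_\hbar^\wedge(\N_\hbar^\wedge)$ to $(\N'_\hbar)^{\ad V} = \N_\hbar^\wedge$, whose $\K^\times$-l.f. part is $\N_\hbar$) gives $\widetilde{\N}_{\hbar\dagger} = \N_\hbar$; dividing by $(\hbar-1)$ yields $(\N^\ddag)_\dagger = \N$. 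One must also verify that $\N^\ddag$ as defined here coincides with the preimage of $\N^\dagger \subset (\M_\dagger)^\dagger$ in $\M$, which follows from the adjunction $\Hom(\M, \N^\dagger) \cong \Hom(\M_\dagger, \N)$ of Proposition \ref{Prop:3.7.1}(4) (extended as in Remark \ref{Rem_up_dag}) applied to the projection $\M_\dagger \twoheadrightarrow \M_\dagger/\N$. I expect the main obstacle to be part two of the surjectivity argument, i.e. controlling $\bigcap_k \N_{\hbar,k}$ and showing the completion of this intersection is all of $\N'_\hbar$ rather than something smaller — the $\hbar$-adic and $I_{\chi,\hbar}$-adic topologies interact subtly here, and this is precisely what Lemma \ref{Lem:4.3.2} is designed to handle.
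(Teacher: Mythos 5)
Your plan is structurally the same as the paper's proof: reduce to the homogenized level where $\bullet_\dagger$ becomes the completion functor, split the surjectivity claim into the case $\hbar^k\M^\wedge_\hbar\subset\N'_\hbar$ and the $\hbar$-saturated case, use the $\HVB^\wedge_{G/G_\chi}$ results for the first, and invoke Lemma \ref{Lem:4.3.2} for the second. You have correctly identified all the moving parts. However, the decisive step — part two — is not actually an argument in your proposal but a gesture at one, and your phrasing of it contains a small but important confusion.

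Specifically, you write that one ``takes $\widetilde\N_\hbar:=\bigcap_k\N_{\hbar,k}$'' to finish. But there is no new subbimodule to build: the intersection $\bigcap_k\N_{\hbar,k}$ is simply the preimage $\N_\hbar=\N'_\hbar\cap\M_\hbar$ (this follows because $\N'_\hbar$ is closed, by Lemma \ref{Lem:1.231}, so $\bigcap_k\N'_{\hbar,k}=\N'_\hbar$). What must be proved is that $\N_\hbar^\wedge=\N'_\hbar$, and saying that Lemma \ref{Lem:4.3.2} ``handles'' the $\hbar$-adic / $I_{\chi,\hbar}$-adic interaction does not supply that proof. The actual argument in the paper is a contradiction: since $\N_\hbar$ is $\hbar$-saturated, one may replace $\M_\hbar$ by $\M_\hbar/\N_\hbar$ and $\N'_\hbar$ by $\N'_\hbar/\N_\hbar^\wedge$, so that WLOG $\M_\hbar\in\HC_{\overline\O}(\U_\hbar)$ and $\N_\hbar=\{0\}$, and one must show $\N'_\hbar=\{0\}$. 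Assuming otherwise, one forms the graded $\K[\g^*][\hbar]$-module $T:=\bigoplus_{k\geqslant 0} T_k$ with $T_k:=\N_{\hbar,k}/\N_{\hbar,k+1}$ (each $T_k$ is a $\K[\g^*]$-module because $\hbar\N_{\hbar,k}\subset\N_{\hbar,k+1}$, and multiplication by $\hbar$ gives maps $T_k\to T_{k+1}$). Lemma \ref{Lem:4.3.2} is used precisely to embed $T$ into $C[\hbar]$ where $C:=[\M^\wedge_\hbar/\N'_{\hbar,0}]_{l.f.}$ is finitely generated by the argument of Lemma \ref{Lem:3.5.1}; finite generation of $T$ then forces $T_i=\hbar^{i-k}T_k$ for $i\gg k$, i.e.\ $\N_{\hbar,i}=\hbar^{i-k}\N_{\hbar,k}+\N_{\hbar,i+1}$. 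The contradiction now comes from the grading on $\M_\hbar$: it is bounded below with finite-dimensional graded pieces, the $\N_{\hbar,i}$ are graded, and the stabilization formula shows the graded pieces of $\N_{\hbar,i}$ stabilize to nonzero subspaces, hence $\N_\hbar=\bigcap_i\N_{\hbar,i}\neq\{0\}$, a contradiction. This use of the $\K^\times$-grading (not just the two filtrations) to pin down a nonzero element is the key idea your plan is missing, and it is genuinely necessary — without it there is no way to rule out $\bigcap_k\N_{\hbar,k}$ collapsing to zero while $\N'_\hbar$ remains nonzero.

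Two minor remarks on part one. First, the filtration whose quotients live in $\HVB^\wedge_{G/G_\chi}$ is applied not to $\M_\hbar^\wedge/\N'_\hbar$ as you wrote, but to the module $\widehat\N_\hbar/\N_\hbar$ (the quotient of the preimage of the $\hbar$-saturation by the preimage of $\N'_\hbar$), and the relevant tool is Corollary \ref{Cor:3.0.1} which commutes annihilators with completion. Second, the paper does not prove the $\hbar^k\M^\wedge_\hbar\subset\N'_\hbar$ case directly; it first proves that (*) for the $\hbar$-saturation $\widehat\N'_\hbar$ implies (*) for $\N'_\hbar$, from which both the $\hbar^k$-case and the reduction to the saturated case drop out at once. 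Your version of part one would also work but involves a redundancy.
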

The ideas behind the proof were briefly explained in Subsection \ref{SSS_4}.
\begin{proof}
Examining the construction of the functors we see that the assertion of the theorem stems from the following claim:
\begin{itemize}
\item[(*)] Let $\M_\hbar\in \HC(\U_\hbar)$ and $\N'_\hbar$ be a $\g$- (with respect to the action
$\xi\mapsto \frac{1}{\hbar^2}[\xi,\cdot]$), $\K^\times$- and $Q$-stable
(but not necessary $\hbar$-saturated) $\U^\wedge_\hbar$-subbimodule in $\M^\wedge_\hbar$.
Then $\N_\hbar'$ is the completion of its preimage $\N_\hbar$ in $\M_\hbar$.
\end{itemize}

For a $\U_\hbar^\wedge$-subbimodule $\N_\hbar'\subset \M^\wedge_\hbar$ we define its
$\hbar$-saturation $\widehat{\N}'_\hbar$ as the subset of $\M^\wedge_\hbar$ consisting of all
elements $m\in \M_\hbar^\wedge$ with $\hbar^k m\in \N_\hbar'$. Since $\M^\wedge_\hbar$ is Noetherian
there is $N\in\NN$ with $\hbar^N \widehat{\N}'_\hbar\subset \N'_\hbar$.

Let us show that if (*) holds for $\widehat{\N}'_\hbar$, then it holds for $\N'_\hbar$.
So suppose that the completion $\widehat{\N}_\hbar^\wedge$ of the preimage $\widehat{\N}_\hbar$ of $\widehat{\N}'_\hbar$
coincides with $\widehat{\N}'_\hbar$.

 Consider the subbimodule $\N'_\hbar/\N^\wedge_\hbar\subset (\widehat{\N}_\hbar/\N_\hbar)^{\wedge}$. We remark
that $\widehat{\N}_\hbar/\N_\hbar$ is embedded into $\widehat{\N}'_\hbar/\N'_\hbar$  and hence
is annihilated by some power of $\hbar$ and
 is supported on $\overline{\Orb}$. Let $I_\hbar(\Orb)$ denote the preimage of
$I(\Orb)$ in $\U_\hbar$. We see that some power of $I_\hbar(\Orb)$ annihilates
$\widehat{\N}_\hbar/\N_\hbar$. Let $M$ denote the (no matter, left or right) annihilator
of $I_\hbar(\Orb)$ in $\widehat{\N}_\hbar/\N_\hbar$. By Corollary \ref{Cor:3.0.1}, the annihilator
of $I_\hbar(\Orb)$ in $(\widehat{\N}_\hbar/\N_\hbar)^\wedge$ coincides with $M^\wedge$. In particular,
$N':=\N'_\hbar\cap M^\wedge\neq \{0\}$. Both $M^\wedge$ and $N'$ are objects in $\HVB^\wedge_{G/G_\chi}$.
It follows from Propositions \ref{Prop:hom2},\ref{Prop:hom3} that  $N'$ coincides
with the completion of its preimage $N$ in $M$. Consider the preimage $\widetilde{\N}_\hbar$
of $N$ under the projection $\widehat{\N}_\hbar\rightarrow \widehat{\N}_\hbar/\N_\hbar$. Then $\widetilde{\N}_\hbar^\wedge\subset \N'_\hbar$. Therefore $\widetilde{\N}_\hbar=\N_\hbar$. Contradiction.
So we have proved that (*) for $\widehat{\N}_\hbar$ implies (*) for $\N_\hbar$.

In particular, we see that (*) holds for $\N'_\hbar$ provided $\hbar^k \M^\wedge_\hbar\subset \N'_\hbar$.
Also we see that it is enough to prove (*) for $\hbar$-saturated $\N'_\hbar$. Below $\N'_\hbar$ is
assumed to be $\hbar$-saturated.

Set $\N'_{\hbar,k}=\N'_\hbar+\hbar^{k+1} \M^\wedge_\hbar$. Let
$\N_{\hbar,k}$ denote the preimage of $\N'_{\hbar,k}$ in $\M_\hbar$. By the above, $\N^\wedge_{\hbar,k}=\N'_{\hbar,k}$.
On the other hand, $\bigcap_k \N'_{\hbar,k}=\N'_\hbar$ because $\N'_\hbar$ is closed in
$\M^\wedge_\hbar$, see Lemma \ref{Lem:1.231}. Therefore $\bigcap_k \N_{\hbar,k}=\N_\hbar$.
Since $\N'_\hbar \subset \M^\wedge_\hbar$ is $\hbar$-saturated, we see that so is  $\N_\hbar\subset \M_\hbar$.
Replace $\M_\hbar$ with $\M_\hbar/\N_\hbar$ and $\N'_\hbar$ with $\N'_\hbar/\N^\wedge_\hbar$.
So we may and will assume that $\M_\hbar\in \HC_{\overline{\Orb}}(\U_\hbar)$ and $\N_\hbar=\{0\}$. We need to check that $\N'_\hbar=\{0\}$. Assume the converse.

Set $T_k:=\N_{\hbar,k}/\N_{\hbar,k+1}$. By definition, this is a
$\U_{\hbar}/(\hbar^{k+2})$-module. However,
it is easy to see that \begin{equation}\label{eq:4.3:1}
\hbar\N_{\hbar,k}\subset \N_{\hbar,k+1},
\end{equation}
 So $\hbar$ acts trivially
on $T_k$ and $T_k$ is a $\K[\g^*]$-module. Moreover,
(\ref{eq:4.3:1}) implies that the multiplication by $\hbar$ induces
a homomorphism $T_k\rightarrow T_{k+1}$ of $\K[\g^*]$-modules also
denoted by $\hbar$. So $T:=\bigoplus_{i=0}^\infty T_i$ becomes a
$\K[\g^*][\hbar]$-module.

Suppose for a moment that $T$ is a finitely generated $\K[\g^*][\hbar]$-module.
 It follows that there is $k>0$ such that $T_i=\hbar^{i-k}T_k$ for all $i>k$. This implies
\begin{equation}\label{eq:surj1}\N_{\hbar,i}=\hbar^{i-k}\N_{\hbar,k}+
\N_{\hbar,i+1}.\end{equation}
Now recall that $\M_\hbar$ is graded, the grading is bounded from below, and all graded components
are finite dimensional. All $\N_{\hbar,i}$ are graded sub-bimodules in $\M_\hbar$.
(\ref{eq:surj1}) implies that for any $k$ the $k$-th graded component of $\N_\hbar$
coincides with that of $\N_{\hbar,i}$ for sufficiently large $i$. Also (\ref{eq:surj1})
implies that the inverse sequence of the projections of $\N_{\hbar,i}$ to $\M_{\hbar,0}$
stabilizes. So we can find $k$ such that the $k$-th graded component of $\N_{\hbar,i}$
is nonzero for all $i$. It follows that $\N_\hbar\neq \{0\}$. Contradiction.

To prove that $T$ is finitely generated we use the following construction.
Set  $C:=[\M^\wedge_\hbar/\N'_{\hbar,0}]_{l.f.}$. The argument of  Lemma \ref{Lem:3.5.1}
implies that $C$ is a finitely generated $\K[\g^*]$-module. The following lemma shows that $T$ is a submodule
in $C[\hbar]$ and hence is finitely generated. This completes the proof of the theorem.
\end{proof}

\begin{Lem}\label{Lem:4.3.2}
There is an embedding $T\hookrightarrow C[\hbar]$ of
$\K[\g^*][\hbar]$-modules.
\end{Lem}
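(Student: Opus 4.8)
The plan is to build, for each $k\geqslant 0$, a $\K[\g^*]$-linear map $\phi_k\colon T_k\to C$ and then to assemble them into the desired embedding $\Phi\colon T\hookrightarrow C[\hbar]$ by declaring $\Phi|_{T_k}=\phi_k(\cdot)\,\hbar^k$. Recall that after the reductions made in the proof of Theorem~\ref{Thm:surjectivity} the completion map $\M_\hbar\to\M^\wedge_\hbar$ is injective, since its kernel lies in $\N_\hbar=\{0\}$; I identify $\M_\hbar$ with its image. For $[m]\in T_k$ represented by $m\in\N_{\hbar,k}$, the definition of $\N_{\hbar,k}$ supplies a decomposition $m=n+\hbar^{k+1}m'$ with $n\in\N'_\hbar$ and $m'\in\M^\wedge_\hbar$, and I set $\phi_k([m])$ to be the image of $m'$ in $\M^\wedge_\hbar/\N'_{\hbar,0}$. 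This is well defined: a second decomposition $n_1+\hbar^{k+1}m'_1=n_2+\hbar^{k+1}m'_2$ forces $\hbar^{k+1}(m'_1-m'_2)\in\N'_\hbar$, hence $m'_1-m'_2\in\N'_\hbar\subset\N'_{\hbar,0}$ because $\N'_\hbar$ is $\hbar$-saturated; and $\phi_k$ vanishes on $\N_{\hbar,k+1}$, where one may take $m'\in\hbar\M^\wedge_\hbar\subset\N'_{\hbar,0}$. It is $\K[\g^*]$-linear because $\M^\wedge_\hbar/\N'_{\hbar,0}$ is killed by $\hbar$, so the $\U_\hbar$-action on it factors through $\U_\hbar/(\hbar)=\K[\g^*]$; for $f\in\K[\g^*]$ lifted to $\tilde f\in\U_\hbar$ one has $\tilde f m=\tilde f n+\hbar^{k+1}(\tilde f m')$ with $\tilde f n\in\N'_\hbar$, whence $\phi_k(f[m])=f\,\phi_k([m])$.

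The main obstacle is to show that $\phi_k([m])$ actually lands in the locally finite subspace $C$ and not merely in $\M^\wedge_\hbar/\N'_{\hbar,0}$. I would take $m$ homogeneous (the submodules $\N_{\hbar,k}$, hence the $T_k$, are graded). Then $m$ is $\g$-locally finite by Lemma~\ref{Lem:2.5.1}(3), and, being homogeneous of some degree $i$, is an eigenvector of weight $2i$ for the $\K^\times$-action used in the definition of $(\cdot)_{l.f.}$ (on the degree-$i$ part of $\M_\hbar$ the Kazhdan action composed with $\gamma(t)^{-1}$ reduces to $t\cdot m=t^{2i}m$); so the image of $m$ is locally finite in $M:=\M^\wedge_\hbar/\N'_\hbar$. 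Since $\N'_\hbar$ is $\hbar$-saturated and $\M^\wedge_\hbar$ is $\K[[\hbar]]$-flat, $M$ is $\K[[\hbar]]$-torsion free, so multiplication by $\hbar^{k+1}$ is injective on $M$. I claim that $M_{l.f.}$ is $\hbar$-saturated in $M$: if $\hbar^{k+1}y\in M_{l.f.}$ then $\hbar^{k+1}U(\g)y=U(\g)(\hbar^{k+1}y)$ is finite dimensional, so $U(\g)y$ is, i.e. $y\in M_{\g-l.f.}$; and on $M_{\g-l.f.}$ the $\K^\times$-action is pro-algebraic (complete with respect to its weight decomposition), while multiplication by $\hbar$ raises the $\K^\times$-weight by $2$ and is injective, so finiteness of the weight support of $\hbar^{k+1}y$ forces finiteness of that of $y$, giving $y\in M_{l.f.}$. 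Applying this with $y$ the image of $m'$ (so that $\hbar^{k+1}y$ is the image of $m$) shows that image lies in $M_{l.f.}$, hence its further image in $\M^\wedge_\hbar/\N'_{\hbar,0}=M/\hbar M$ lies in $C=(M/\hbar M)_{l.f.}$.

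Finally I would check that $\Phi\colon T\to C[\hbar]$, $\Phi|_{T_k}=\phi_k(\cdot)\hbar^k$, is a homomorphism of $\K[\g^*][\hbar]$-modules and is injective. For $\hbar$-equivariance: if $[m]\in T_k$ is represented by $m=n+\hbar^{k+1}m'$, then $\hbar m=\hbar n+\hbar^{k+2}m'$ with $\hbar n\in\N'_\hbar$, so the \emph{same} $m'$ computes $\phi_{k+1}$ of $\hbar\cdot[m]\in T_{k+1}$, whence $\Phi(\hbar\cdot[m])=\phi_{k+1}(\hbar[m])\,\hbar^{k+1}=\phi_k([m])\,\hbar^{k+1}=\hbar\cdot\Phi([m])$; $\K[\g^*]$-linearity of $\Phi$ follows from that of each $\phi_k$. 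For injectivity, $\phi_k([m])=0$ means $m'\in\N'_{\hbar,0}=\N'_\hbar+\hbar\M^\wedge_\hbar$; writing $m'=n'+\hbar m''$ gives $m=n+\hbar^{k+1}n'+\hbar^{k+2}m''\in\N'_\hbar+\hbar^{k+2}\M^\wedge_\hbar=\N'_{\hbar,k+1}$, so $m\in\N_{\hbar,k+1}$ and $[m]=0$ in $T_k$. Thus $\Phi$ is the required embedding. Apart from the $\hbar$-saturatedness step in the second paragraph, which is the one genuinely delicate point, all of this is routine bookkeeping with the decompositions $m=n+\hbar^{k+1}m'$.
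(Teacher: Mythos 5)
Your proof is correct and constructs exactly the same map as the paper's: on a class $[m]\in T_k$ with $m=n+\hbar^{k+1}m'$ ($n\in\N'_\hbar$), both send $[m]$ to the image of $m'$ in $C$; your verification of well-definedness, $\K[\g^*]$-linearity, $\hbar$-compatibility, and injectivity matches the content of the paper's diagram chase. The one place your argument genuinely deviates is in showing that $m'$ lands in $C$: the paper avoids working in $M=\M^\wedge_\hbar/\N'_\hbar$ itself and instead applies left-exactness of $(\cdot)_{l.f.}$ to the short exact sequence $0\to M/\hbar M\xrightarrow{\hbar^{k+1}} M/\hbar^{k+2}M\to M/\hbar^{k+1}M\to 0$, reading off the embedding $T_k\hookrightarrow C$ from the observation that $T_k$ lands in the kernel; you instead work directly in $M$ and prove that $M_{l.f.}$ is $\hbar$-saturated. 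Your claim is correct, but the justification you give for it is slightly off at the crucial step: $M_{\g-l.f.}$ is merely a (generally non-closed) subspace of $M$, so it is not clear that it is "pro-algebraic" or "complete with respect to its weight decomposition," and speaking of "the weight support of $y$" presupposes what you are trying to prove. The fix is formal and does not need any completeness: if $\hbar^{k+1}y$ lies in a finite-dimensional $\K^\times$-stable subspace $W\subset M_{\g-l.f.}$, then $W\cap\hbar^{k+1}M_{\g-l.f.}$ is finite dimensional and $\K^\times$-stable (the image of $\hbar^{k+1}$ is $\K^\times$-stable because $\hbar^{k+1}$ is equivariant up to a character), and its preimage under the injective map $\hbar^{k+1}$ is a finite-dimensional $\K^\times$-stable subspace containing $y$. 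With that patch your second paragraph is rigorous, and the two proofs are equivalent; the paper's phrasing via the $\hbar$-nilpotent truncations is just a tidier way of packaging the same left-exactness principle.
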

\begin{proof}
We will construct embeddings $\iota_i:T_i\hookrightarrow
C,i=0,1,\ldots,$ such that $\iota_{i+1}(\hbar x)=\iota_i(x)$ for all $x\in T_i$.
For an embedding $T\subset C[\hbar]$ we will take the direct sum of $\iota_i$'s.

Since $\M^\wedge_\hbar/\N'_\hbar$ is $\K[\hbar]$-flat, we have the following exact sequence
\begin{equation}
0\rightarrow \M^\wedge_\hbar/\N'_{\hbar,0}\rightarrow
\M^\wedge_{\hbar}/\N'_{\hbar,k+1}\rightarrow
\M^\wedge_{\hbar}/\N'_{\hbar,k}\rightarrow 0,
\end{equation}
where the first map is the multiplication by  $\hbar^{k+1}$, and thus an exact sequence
\begin{equation}
0\rightarrow
C\rightarrow
(\M^\wedge_{\hbar}/\N'_{\hbar,k+1})_{l.f.}\rightarrow
(\M^\wedge_{\hbar}/\N'_{\hbar,k})_{l.f.}
\end{equation}
There is a natural inclusion
$$T_k\hookrightarrow \M_\hbar/\N_{\hbar,k+1}\hookrightarrow
(\M^\wedge_{\hbar}/\N'_{\hbar,k+1})_{l.f.},$$
whose image in
$(\M^\wedge_{\hbar,k}/\N'_{\hbar,k})_{l.f.}$ is zero.
So we get a $\K[\g^*]$-module embedding
$T_k\hookrightarrow C$.

The claim that these embeddings are compatible with the
multiplication by $\hbar$ stems from the following commutative
diagram.

\begin{picture}(120,70)
\put(15,62){$0$}\put(15,42){$C$}
\put(1,22){$(\M^\wedge_{\hbar}/\N'_{\hbar,k+1})_{l.f.}$}
\put(3,2){$(\M^\wedge_{\hbar}/\N'_{\hbar,k})_{l.f.}$}
\put(95,62){$0$}\put(95,42){$C$}
\put(81,22){$(\M^\wedge_{\hbar}/\N'_{\hbar,k+2})_{l.f.}$}
\put(81,2){$(\M^\wedge_{\hbar}/\N'_{\hbar,k+1})_{l.f.}$}
\put(36,35){$T_k$} \put(70,35){$T_{k+1}$}
\put(16,60){\vector(0,-1){12}} \put(16,40){\vector(0,-1){12}}
\put(16,20){\vector(0,-1){12}} \put(96,60){\vector(0,-1){12}}
\put(96,40){\vector(0,-1){12}} \put(96,20){\vector(0,-1){12}}
\put(20,44){\vector(1,0){73}} \put(40,37){\vector(1,0){28}}
\put(35,24){\vector(1,0){45}} \put(33,4){\vector(1,0){47}}
\put(55,45){\tiny $\operatorname{id}$}
\put(55,38){\tiny $\hbar$}
\put(55,25){\tiny $\hbar$}
\put(55,5){\tiny $\hbar$}
\put(35,34){\vector(-1,-1){8}} \put(77,34){\vector(1,-1){8}}
\put(35,37){\vector(-3,1){16}} \put(77,37){\vector(3,1){16}}
\end{picture}
\end{proof}

\subsection{Completing the proofs}\label{SUBSECTION_finish}
Below for $\I\subset \Walg$ the notation $\I^\dagger$ means an ideal in $\U$ (so that we follow
the conventions of \cite{Wquant} and of Subsection \ref{SUBSECTION_corr_ideals}).

Theorem \ref{Thm:1} follows directly from Theorem \ref{Thm:surjectivity} with $\M=\U$.

\begin{proof}[Proof of Conjecture \ref{Conj:0}]
Thanks to \cite{Wquant}, Theorem 1.2.2(viii),
we need to prove that $Q$ acts transitively on the set of minimal
prime ideals $\I_1,\ldots,\I_l$ of $\J_\dagger$, where $\J\in
\Id_{\overline{\Orb}}(\U)$ is primitive. The ideal $\cap_{\gamma\in
C(e)}\gamma \I_1$ is $Q$-stable and so, by Theorem \ref{Thm:1},
 $\J^1_\dagger=\cap_{\gamma\in
C(e)}\gamma \I_1$, where $\J^1:=(\cap_{\gamma\in C(e)}\gamma
\I_1)^\dagger$. But $\J=\I_1^\dagger\supset \J^1\supset (\bigcap_{i=1}^l \I_i)^\dagger=\J$.
We deduce that $\J_\dagger=\bigcap_{\gamma\in C(e)}\gamma\I_1=\bigcap_{i=1}^l \I_i$.
Since $\Walg/(\bigcap_{i=1}^l\I_i)\cong \bigoplus_{i=1}^l \Walg/\I_i$, we see that any $\I_i$ has the form $\gamma \I_1$. 
\end{proof}

\begin{proof}[Proof  of
Theorem \ref{Thm:2}]
Assertions (1),(2),(3) follow from Proposition \ref{Prop:3.5.5}.

Let us check assertion (4) for the left annihilators (right ones are completely analogous).
Set $\J:=\LAnn_\U(\M), \I:=\LAnn_{\Walg}(\M_\dagger)$.
Since $\bullet_\dagger$ is an exact tensor functor, we see that $\J_\dagger\subset \I$. On the other hand,
by Theorem \ref{Thm:1}, $\I=\widetilde{\J}_\dagger$ for $\widetilde{\J}=\I^\dagger$.
Again, since $\bullet_\dagger$ is a tensor functor, we see that $(\widetilde{\J}\M)_\dagger=\I\M_\dagger=0$.
So $\widetilde{\J}\M\in \HC_{\partial\Orb}(\U)$. Set $\J^1:=\LAnn_{\U}(\widetilde{\J}\M)$.
We have $\J^1\widetilde{\J}\subset \J$. So $\J^1_\dagger\I=(\J^1\widetilde{\J})_\dagger\subset \J_\dagger$.
But $\J^1_\dagger=\Walg$ so $\I\subset \J_\dagger$.

Proceed to the proof of (5). By assertion (1) of Proposition \ref{Prop:3.5.5},
$\bullet_\dagger$ descends to $\HC_\Orb(\U)$. Abusing the notation we write $\bullet^\dagger$ for the composition
of $\bullet^\dagger:\HC^Q_{fin}(\Walg)\rightarrow \HC_{\overline{\Orb}}(\U)$
and the projection $\HC_{\overline{\Orb}}(\U)\rightarrow \HC_{\Orb}(\U)$. This functor
is right adjoint to $\bullet_\dagger:\HC_\Orb(\U)\rightarrow \HC_{fin}^Q(\Walg)$.
By assertion (5) of Proposition
\ref{Prop:3.5.5}, $\bullet^\dagger$ is left inverse to $\bullet_\dagger$.
From here using some abstract nonsense we see that $\bullet_\dagger$ is an equivalence
onto its image.

The claim that the image of $\bullet_\dagger$ is closed under taking subobjects follows from
Theorem \ref{Thm:surjectivity}. Then the image is automatically closed with respect to taking
subquotients.

\end{proof}

\begin{proof}[Proof of Corollary \ref{Cor:3}]
Thanks to assertion 5 of Theorem \ref{Thm:2}, it is enough to show that $\M_\dagger$ is completely reducible
in $\HC_{fin}^Q(\Walg)$. By assertion 4 and Theorem \ref{Thm:1} (together with the proof of Conjecture \ref{Conj:0})
the left and right annihilators of $\M_{\dagger}$ are intersections of primitive ideals of
finite codimension. So $\Walg$ acts on $\M_{\dagger}$ via an epimorphism to the direct sums of matrix algebras.
Therefore $\M_\dagger$ is completely reducible.
\end{proof}

\end{document}